\documentclass{adamjoucc}
\usepackage{lastpage}
\usepackage{color}
\usepackage{amssymb}
\usepackage{mathtools}
\usepackage{xcolor}
\usepackage{tikz}
\usepackage{enumerate}
\usetikzlibrary{shapes.geometric}
\usepackage{multirow}

\journalyear{2026}
\journalpages{1}{\pageref{LastPage}}
\evenrunninghead{ }
\oddrunninghead{ }
\oddrunninghead{ }
\makeatletter
\def\@journalname{Preprint}
\def\@issn{ }
\def\@journalnumber{}

\makeatother

\makeatletter
\def\moverlay{\mathpalette\mov@rlay}
\def\mov@rlay#1#2{\leavevmode\vtop{%
    \baselineskip\z@skip \lineskiplimit-\maxdimen
    \ialign{\hfil$\m@th#1##$\hfil\cr#2\crcr}}}
\newcommand{\charfusion}[3][\mathord]{
  #1{\ifx#1\mathop\vphantom{#2}\fi
    \mathpalette\mov@rlay{#2\cr#3}
  }
  \ifx#1\mathop\expandafter\displaylimits\fi}
\DeclareRobustCommand\bigop[1]{%
  \mathop{\vphantom{\sum}\mathpalette\bigop@{#1}}\slimits@
}
\newcommand{\bigop@}[2]{%
  \vcenter{%
    \sbox\z@{$#1\sum$}%
    \hbox{\resizebox{\ifx#1\displaystyle.9\fi\dimexpr\ht\z@+\dp\z@}{!}{$\m@th#2$
}}%
  }%
}
\makeatother

\newcommand{\cupdot}{\charfusion[\mathbin]{\cup}{\cdot}}

\DeclareMathOperator{\rank}{rank}
\DeclareMathOperator{\sgn}{sgn}
\DeclareMathOperator{\Det}{det}
\DeclareMathOperator{\stout}{stout}
\DeclareMathOperator{\stin}{stin}

\newcommand{\king}{\mathsf{K}}
\newcommand{\child}{\pmb{\kappa}}
\newcommand{\Metzler}[1]{\mathfrak{M}\left(#1\right)}
\newcommand{\SM}{\mathbf{S}}

\begin{document}

\begin{frontmatter}

\titledata{Autocatalytic Cores in Reaction Networks with Explicit
  Catalysis}{}

\authordata{Richard Golnik}{Bioinformatics Group, Department of Computer
  Science \& Interdisciplinary Center for Bioinformatics Leipzig
  University, H{\"a}rtelstra{\ss}e 16–18, D-04107 Leipzig, \&
  School of Embedded Composite Artificial Intelligence,
  Leipzig University, H{\"a}rtelstra{\ss}e 16–18,
  D-04107 Leipzig, Germany;
  Germany}{richard@bioinf.uni-leipzig.de}{}

\authordata{Thomas Gatter}{Bioinformatics Group, Department of Computer
  Science \& Interdisciplinary Center for Bioinformatics, Leipzig
  University, H{\"a}rtelstra{\ss}e 16–18, D-04107 Leipzig,
  Germany}{thomas@bioinf.uni-leipzig.de}{}
        
\authordata{Peter F.\ Stadler}{Bioinformatics Group,
  Department of Computer Science \&
  Interdisciplinary Center for Bioinformatics \&
  Center for Scalable Data Analytics and Artificial Intelligence 
  Dresden/Leipzig \& 
  School of Embedded Composite Artificial Intelligence,
  Leipzig University, H{\"a}rtelstra{\ss}e 16–18,
  D-04107 Leipzig, Germany;
  Max Planck Institute for Mathematics in the Sciences,
  Inselstra{\ss}e 22, D-04103 Leipzig, Germany;
  Department of Theoretical Chemistry,
  University of Vienna, W{\"a}hringerstra{\ss}e 17,
  A-1090 Wien, Austria;
  Facultad de Ciencias, Universidad National de Colombia;
  Bogot{\'a}, Colombia;
  Santa Fe Institute, 1399 Hyde Park Rd., Santa Fe 
  NM 87501, USA}{studla@bioinf.uni-leipzig.de}{}

\authordata{Nicola Vassena}{Bioinformatics Group, Department of Computer
  Science \& Interdisciplinary Center for Bioinformatics, Leipzig
  University, H{\"a}rtelstra{\ss}e 16–18, D-04107 Leipzig,
  Germany}{nicola.vassena@uni-leipzig.de}{}

\keywords{Chemical Reaction Networks; Autocatalytic Cores; 
		Child-Selection Cores; Semipositivity; Reversible
 		Completion; Fluffle Graphs.}

\msc{}

\begin{abstract}

Autocatalytic cores are the minimal units in reaction networks (RNs)
responsible for the emergence of autocatalysis. In the absence of explicit
catalysis, i.e., when an entity appears both as a reactant and a product in
the same reaction, they are known to be encoded by square submatrices of
the stoichiometric matrix whose columns can be reordered as an irreducible
\emph{child-selection} (CS) matrix with negative diagonal and nonnegative
off-diagonal (Metzler matrix). In the bipartite K\"onig graph representing
the RN, these CS matrices can be identified by
\emph{fluffles}, i.e., strong blocks with an identical number of entity
and reaction vertices, and such that the entity vertices have
out-degree 1 and the reaction vertices have in-degree $1$.

In this contribution, we adapt the concepts derived for autocatalytic cores
to RNs with explicitly catalyzed reactions, which emerge as \emph{digons},
i.e., elementary circuits in the K\"onig graph of length $2$. In this
setting, we confirm that an inspection of the stoichiometric matrix alone
is inconclusive concerning the presence and number of autocatalytic
cores, and that a more delicate algebraic analysis is
required. Nevertheless, this generalization preserves both the graph
representation of autocatalytic cores as fluffles and their matrix
representation as irreducible Metzler CS matrices, although the diagonal is
no longer necessarily strictly negative.

We further introduce the notion of \emph{hard autocatalytic cores}, namely
those that do not yield other autocatalytic cores upon inclusion of the
reverse reactions of their constituting reactions. Finally, we consider the
case of unit stoichiometries (0 and 1) and show that every autocatalytic
core can be constructed as the superposition of at most two elementary
circuits in the König graph. In particular, autocatalytic cores involving
explicitly catalyzed reactions always contain a spanning subgraph
consisting of a single elementary circuit together with a simple
entity-to-reaction chord. Moreover, we identify the essentially unique
example for which at least two circuits are required.

\end{abstract}

\end{frontmatter}

\section{Introduction}

Autocatalysis is a collective property of a reaction network describing its
capability to catalyze its own production. Two fundamentally different
mathematical frameworks have been used to analyze collective autocatalysis:
in the setting of \emph{Reflexively Autocatalytic and Food-generated} (RAF)
networks \cite{Hordijk:04,Hordijk:18}, every reaction is assumed to be
catalyzed by a member of the network. In the setting of chemical reaction
networks (RNs), on the other hand, explicit catalysts for single reactions
have often been excluded, and both catalysis and autocatalysis have been
treated as entirely emergent properties.

While conceptually simple, a widely accepted definition of autocatalysis
has appeared only recently in \cite{Blokhuis:20}, see \cite{Andersen:20x}
for a comparison with some alternatives, in particular \cite{Barenholz:17}.
A common feature of these definitions is that they are concerned with
subnetworks that are ``productive'' in the sense of admitting a flow that
replenishes the subnetwork and adheres to some additional properties
reminiscent of self-maintenance and closure in chemical organization theory
\cite{Dittrich:07}.

A detailed inspection of the recent publications on autocatalytic systems
still shows subtle differences in the definitions of autocatalytic
subnetworks and the related notions of their minimality in terms of
autocatalytic capacity. Moreover, some authors tend to use different
terminology for essentially the same concept or similar terminology for
concepts that turn out to be substantially different. For example, the
expression \emph{autocatalytic core} has been coined in \cite{Blokhuis:20},
but also used for a more restrictive notion of minimality, referring to
principal submatrices in \cite{Blokhuis:25w, Vassena:24a}. Most recently,
several computational approaches \cite{Gagrani:24,Golnik:25z, Golnik:26q}
have become available to identify minimal autocatalytic subsystems of
various types in large RNs.

The purpose of this note is to give a formal account of autocatalytic cores
in reaction networks containing explicitly catalyzed reactions and, in
doing so, providing a self-contained review on the topic of minimality for
autocatalysis.

\section{Reaction Networks}\label{sec:RN}

\paragraph{Basic Notation.} 
A reaction network (RN) is a pair $(X,R)$ of a finite nonempty set of
entitities $x \in X$ and a set of reactions $r\in R$ of the form
\begin{equation}
  \sum_{x\in X} s^-_{xr} \cdot x \quad \underset{r}{\longrightarrow}
  \quad \sum_{x\in X} s^+_{xr} \cdot x,
\end{equation}
where $s^-_{xr}, s^+_{xr} \ge0$ are the nonnegative \emph{stoichiometric
coefficients} of the molecular count. An entity $x$ is a \emph{reactant} of
$r$ if $s^-_{xr}>0$, a \emph{product} of $r$ if $s^+_{xr}>0$ and a
\emph{catalyst} of $r$ if both $s^-_{xr}>0$ and $s^+_{xr}>0$. In the
presence of a catalyst for a reaction $r$, we say that $r$ is
\emph{explicitly catalyzed}. The \emph{stoichiometric matrix} $\mathbf{S}$
of $(X,R)$ has entries $\mathbf{S}_{xr}\coloneqq s^+_{xr}-s^-_{xr}$. The
first straightforward but crucial observation is that an RN $(X,R)$ is
completely determined by $\mathbf{S}$ if and only if there are no
catalysts. In the presence of catalysts, $\mathbf{S}_{xr}$ describes only
the the \emph{net production ($s^+_{xr}-s^-_{xr}>0$}) or \emph{net
consumption} ($s^+_{xr}-s^-_{xr}<0$) of $x$. We will refer to this as the
\emph{net stoichiometry} of the reaction $r$, in contrast to the
stoichiometry of $r$ given by the coefficients $s^-_{xr}$ and
$s^+_{xr}$. We say that an RN is \emph{well-formed} if for every reaction
$r$ there is a reactant and product in $(X,R)$, i.e, for every $r\in R$,
there is $x_1\in X$ such that $s^-_{x_1r}>0$ and there is a $x_2\in X$ such
that $s^+_{x_2r}>0$.  In particular, for a well-formed RN, production (or
inflow) and degradation (or outflow) reactions, with no reactants and no
products, respectively, are excluded.

\paragraph{Bipartite representation.} An RN can be represented by its
bipartite \emph{K{\"o}nig graph} $\king=(X\cupdot R, E)$ with directed edge
set $E=E_1\cup E_2$, where $E_1\coloneqq \{(x,r)\;|\; s^-_{xr}>0\}$ and
$E_2\coloneqq \{(r,x)\;|\; s^+_{xr}>0\}$. Using terminology from metabolic
networks, where the entities are \emph{metabolites}, we call the edges in
$E_1$ \emph{MR-edges} (Metabolite-Reaction) and edges in $E_2$ \emph{RM-edges}
(Reaction-Metabolite). In this context, the stoichiometric coefficients can
be thought of as weights associated with the respective edges. The property
of being well-formed can then be translated as requiring that each vertex
$r\in R'$ has at least one in-edge (an MR-edge in $E_1$) and at least one
out-edge (an RM-edge in $E_2$).  We remark that the term
``SR-graph'' (Species-Reactions) is also used in the literature for the
same object \cite{CraciunFeinberg:06}.

\paragraph{Subnetworks.} Many of the concepts we will use rely on 
subnetworks, which we formally define as follows.
\begin{definition}
  A \emph{subnetwork} (sub-RN) of $(X,R)$ is a pair $(X',R')$ such that
  $R'\subseteq R$ and $X'\subseteq X$.
\end{definition}

The stoichiometric matrix of a sub-RN is the rectangular submatrix
$\mathbf{S}[X',R']$ obtained from $\mathbf{S}$ by deleting all rows and
columns corresponding to entities not contained in $X'$ and reactions not
contained in $R'$, respectively. Consistent with the above, a sub-RN
$(X',R')$ is \emph{well-formed} if for every reaction $r\in R'$ there is
$x'_1,x'_2\in X'$ such that $s^-_{x'_1r}>0$ and $s^+_{x_2'r}>0$.

In graph-theoretical terms, a sub-RN $(X',R')$ corresponds to the induced
subgraph $\king[X'\cupdot R']$.  We write $X(R')$ for the set of all
reactants $x_1\in X$ and products $x_2\in X$ appearing in the reactions
$r\in R'$:
\begin{equation}
  X(R') \coloneqq \{\,x\in X \mid \exists r\in R' : s^-_{xr}>0 \
  \lor\ s^+_{xr}>0\,\}.
\end{equation}
The induced subgraph $\king[X(R')\cupdot R']$ thus contains the complete
neighborhoods of all its reaction vertices. In turn, the subgraph
$\king[X'\cupdot R']$ contains no \emph{isolated} entity vertices $x\in X$
if and only if $X'\subseteq X(R')$; that is every entity $x\in X'$ is
incident with at least one edge.  Following \cite{Gagrani:24,Kosc:25} we
may define
\begin{equation}
  \begin{split} 
    F(X',R') &\coloneqq
    \{x\in \;(X\setminus X')\;|\;\exists r\in R': s^-_{xr}>0\}, \\
    W(X',R') &\coloneqq 
    \{x\in \;(X\setminus X')\;|\;\exists r\in R': s^+_{xr}>0\}.
    \end{split}
\end{equation}
as the \emph{food} and \emph{waste} of a sub-RN $(X',R')$, respectively.

\paragraph{Child-Selections.} In the context of structural sensitivity
analysis, \emph{child-selections} were first introduced in \cite{Brehm:18}
as injective maps $\kappa$ that assign to each entity $x\in X$ a reaction
$\kappa(x)\in R$ with $s^-_{x\kappa(x)}>0$, i.e. $x$ is a reactant in its
`child' reaction $\kappa(x)$. This approach was later generalized to
subsets $X'\subset X$ and shown to play a key role in determining
structural properties of dynamics of an RN endowed with general kinetic
models, see \cite{VasHunt23, Vassena:24a}. By setting $R'=\kappa(X')$,
$\kappa$ becomes a bijection on the sub-RN $(X',R')$.  We arrived at the
following definition.
\begin{definition}[\cite{Vassena:24a}]\label{def:CS}
  A \emph{child-selection} (CS) $\child$ in $(X,R)$ is a triple
  $\child=(X',R',\kappa)$ such that $(X',R')$ is a sub-RN of $(X,R)$ and
  $\kappa:X'\to R'$ is a bijection such that $\kappa(x)=r$ implies
  $s^-_{xr}>0$, i.e., $x$ is a reactant for reaction $\kappa(x)$.
\end{definition}

By construction, therefore, every reaction in a CS $(X',R',\kappa)$ has a
reactant, and every entity appears at least once as a reactant. In
particular, $|X'|=|R'|$ and hence $\mathbf{S}'\coloneqq
\mathbf{S}[X',R']$ is a \emph{square} submatrix of the stoichiometric matrix
$\mathbf{S}$. We denote by $\mathbf{S}[\child]$ the square matrix obtained by re-ordering
the columns of $\mathbf{S'}$ according to the bijection $\kappa$, i.e. by
setting $\mathbf{S}[\child]_{xy}\coloneqq \mathbf{S}'_{x\kappa(y)}$. We
call such $\mathbf{S}[\child]$ a \emph{CS-matrix}.

A child-selection $\tilde{\child}=(X'',R'',\tilde{\kappa})$ is a
\emph{sub-CS} of $\child=(X',R',\kappa)$ if $X''\subseteq X'$,
$R''\subseteq R'$, and $\tilde{\kappa}:X''\to R''$ is the restriction of
$\kappa$ to $X''$, i.e., $\tilde{\kappa}(x)=\kappa(x)$ for all $x\in
X''$. Clearly, if $\child$ is a CS in $(X,R)$ and $\tilde{\child}$ is a sub-CS of
$\child$, then $\tilde{\child}$ is also a CS in $(X,R)$.

For every child-selection $\child=(X',R',\kappa)$ we define the (not
necessarily induced) bipartite subgraph $\king(\child)$ as follows:
$\king(\child)$ has vertices $V(\king(\child))\coloneqq X'\cupdot R'$ 
and edges $E(\king(\child)) \coloneqq
\{E_1(\child)\cupdot E_2(\child)\}$, where,
\begin{equation}\label{eq:childbipartite}
\begin{split}
  E_1(\child) &\coloneqq
  \{(x,r) \in X'\times R' \text{ such that }r=\kappa(x)\},\\
  E_2(\child) &\coloneqq
  \{(r,x) \in R'\times X' \text{ such that }s^+_{xr}>0\}.
 \end{split}
\end{equation}
Note the subtle difference: $E_2(\child)=E_2(\king)\cap(R'\times X')$,
while $E_1(\child)\subseteq E_1(\king)\cap(X'\times R')$. Therefore, by
construction, $\king(\child)$ is a spanning subgraph of the induced
subgraph $\king[X'\cupdot R']$ with $\king(\child)=\king[X'\cupdot R']$ if
and only if $E_1(\child)=E_1(\king[X'\cupdot R'])$. As noted in
\cite{Golnik:25z}, a subgraph $G$ of $\king$ derives from a CS if and only
if its MR-edges $E_1(G)$ form a \emph{perfect matching}.

\paragraph{Further Notation.} For a vector $\mathbf{v}$ we write
$\mathbf{v}\gg0$ if $\mathbf{v}_i>0$ for all $i$, and $\mathbf{v}>0$ if
$\mathbf{v}_i\ge 0$ for all $i$ and there is at least one $k$ with
$\mathbf{v}_k>0$. A matrix $\mathbf{A}$ is \emph{semipositive} if there is
a positive vector $\mathbf{v}>0$ or, equivalently (making use of the
continuity of linear maps), a strictly positive vector $\mathbf{v}\gg0$,
such that $\mathbf{A}\mathbf{v}\gg 0$, see \cite{Johnson:94}.

\section{Autocatalytic sub-RNs and autocatalytic child-selections}
\label{sec:autocat}

The widely accepted definition of (structural) autocatalysis was proposed
in \cite{Blokhuis:20}.
\begin{definition}\label{def:autSubRN}
  An RN $(X,R)$ is \emph{autocatalytic} if it contains a sub-RN $(X',R')$ such that
\begin{enumerate}[(i)] 
\item $(X',R')$ is  well-formed;
 \item $\mathbf{S}[X',R']$ is semi-positive.
\end{enumerate}
\end{definition}
Sub-RNs satisfying (i) and (ii) are also called \emph{autocatalytic
motifs} in \cite{Kosc:25}. A child-selection $(X',R',\kappa)$ is
autocatalytic if the sub-RN $(X',R')$ satisfies (i) and (ii).

In both \cite{Barenholz:17} and \cite{Blanco:24}, an autocatalytic
subnetwork is in addition required to satisfy \emph{entity autonomy},
i.e., for every $x\in X'$ there are reactions $r',r''\in R'$ such that
$s^-_{xr'}>0$ and $s^+_{xr''}>0$. We shall see below, however, that the
minimality definition automatically implies this property.
\begin{definition}[\cite{Blokhuis:20}]
  An autocatalytic subsystem $(X',R')$ is an \emph{autocatalytic core} if
  it does not contain a strictly smaller autocatalytic sub-RN.
\end{definition}

The setting of \cite{Blokhuis:20} assumes that no catalysts are present in
the RN. An important insight in \cite{Blokhuis:20} is that an autocatalytic
core in a network without catalysts has an associated stoichiometric matrix
$\mathbf{S}[X',R']$ whose columns can be reordered such that the resulting
matrix $\mathbf{A}$ has negative diagonal entries and non-negative
off-diagonal entries. This re-ordering is unique in the sense that for each
entity $x\in X'$ there is a unique reaction $\kappa(x)\in R'$ such that
$\mathbf{S}_{x\kappa(x)}<0$. In the presence of catalysts, however, this result is no longer true. Consider e.g.
\begin{equation}\label{ex:cat1}
  \begin{split}
      a + x_1 &\underset{r_1}{\longrightarrow} x_1 + x_2  \\
      x_2     &\underset{r_2}{\longrightarrow} x_1      \\
  \end{split}
\end{equation}
The associated sub-RN $(X',R')$ with $X'=\{x_1,x_2\}$ and $R'=\{r_1,r_2\}$
clearly is well-formed with a semipositive stoichiometric matrix
\begin{equation}
  \mathbf{S'} = \left(\begin{matrix}
    0 &  1 \\
    1 & -1 \\
  \end{matrix}\right),
\end{equation}
since $\mathbf{S'}
\begin{pmatrix}2\\1
\end{pmatrix}=
\begin{pmatrix}1\\1
\end{pmatrix} \gg 0$
is a positive vector. $(X',R')$ is thus an autocatalytic sub-RN. Moreover,
one easily checks that no pair of a single entity and a single reaction is
autocatalytic (see also Remark \ref{rmk:singlereaction} below), and hence
$(X',R')$ is an autocatalytic core. However, $\SM'$ has only one negative 
entry, and hence its columns cannot be re-ordered to obtain a matrix with 
negative diagonal entries. Thus, in the presence of catalysis,
we cannot
conclude from the structure of $\mathbf{S'}$ alone that autocatalytic cores
have unique assignments $\kappa:X'\to R'$. We
shall see later (Lemma~\ref{lem:square+kappa}), however, that this property
remains true.

Before we proceed, the following remark considers the case of sub-RNs with
a single entity and reaction $(\{x\},\{r\})$.
\begin{remark}\label{rmk:singlereaction}
A sub-RN $(X',R')$ with a single entity and reaction, $X'=\{x\}$ and
$R'=\{r\}$ is an autocatalytic core if and only if $s_{xr}^+>s_{xr}^->0$,
since only in this case $\mathbf{S}[\{x\},\{r\}]=s_{xr}^+-s_{xr}^->0$, and semipositivity follows:
$\mathbf{S}[\{x\},\{r\}]v>0$ for any $v>0$. In particular, single-entity
autocatalytic cores require the presence of catalysts. In the absence of
catalysts, an autocatalytic core involves at least two species and two
reactions. In the example \eqref{ex:cat1} above, $(X',R')$ does not contain such a
pair, and hence is indeed an inclusion-minimal autocatalytic sub-RN, and
thus an autocatalytic core.
\end{remark}

Let us now turn to the slightly different notion of cores introduced 
in \cite{Vassena:24a} to identify sub-RNs capable of causing dynamical
instability. We paraphrase this definition here for the specific case
of autocatalysis.
\begin{definition} 
  A CS $\child=(X',R',\kappa)$ in $(X,R)$ is an \emph{autocatalytic CS-core} if
  $(X',R')$ is an autocatalytic sub-RN, and there is no proper autocatalytic
  sub-CS $\tilde{\child}$.
\end{definition}
First we note that any autocatalytic pair $(\{x\},\{r\})$ is an
autocatalytic CS-core, since $(\{x\},\{r\},\kappa)$ with $\kappa(x)=r$ is a
CS because $x$ is, by assumption, a reactant of $r$.

More generally, in the networks without catalysts, every autocatalytic core
$(X',R')$ gives rise to a unique CS $\child=(X',R',\kappa)$ where $\kappa$
is defined by the unique negative entry $\mathbf{S}[X',R']_{x\kappa(x)}<0$
in each row (and column). In other words,
$\SM[\child]$ is a semipositive \emph{Metzler
matrix}.  Since there is no autocatalytic sub-RN of $(X',R')$ by
assumption, there is in particular also no autocatalytic sub-CS of
$\child=(X',R',\kappa)$, which therefore is an autocatalytic CS-core.  We
shall see in Sec.~\ref{sec:corescat} below that autocatalytic cores are
always autocatalytic CS-cores also in RNs with catalysts.

The converse is not true, with or without catalysis: there are
autocatalytic CS-cores that are not autocatalytic cores. To see this,
consider the following simple RN:
\begin{equation}\label{ex:cat2}
  \begin{split}
    x_1 + x_2 &\underset{r_1}{\longrightarrow} 2x_2  \\
    a + x_2 &\underset{r_2}{\longrightarrow} 2x_1  \\
  \end{split}
\end{equation}
Let $\child=(X',R',\kappa)$ be the CS with $X'=\{x_1,x_2\}$,
$R'=\{r_1,r_2\}$, and $\kappa(x_i)=r_i,i=1,2$. Note that this
is the only CS on $(X',R')$. The stoichiometric matrix
\begin{equation}
  \mathbf{S'} = \left(\begin{matrix}
    -1 &  2 \\
     1 & -1 \\
  \end{matrix}\right)
\end{equation}
is semipositive: for $\mathbf{v}=
{\scriptsize\begin{pmatrix}3\\2\\ \end{pmatrix}}$ we obtain
$\mathbf{S'}\mathbf{v}=
{\scriptsize\begin{pmatrix}1\\1\\ \end{pmatrix}}\gg0$, and hence
$(X',R',\kappa)$ is an autocatalytic CS. Obviously, both sub-CS associated
with $\{x_1,\kappa(x_1)=r_1\}$ and $\{x_2,\kappa(x_2)=r_2\}$ are
non-autocatalytic and thus $\child$ does not contain a smaller
autocatalytic CS and it is an autocatalytic CS-core. The RN
\eqref{ex:cat2}, however, \emph{does} contain another autocatalytic core
$(X'',R'')=(\{x_2\},\{r_1\})$, corresponding to the autocatalytic reaction
$x_2+...\underset{r_1}\rightarrow 2x_2$. Clearly, the associated CS
$\pmb{\lambda}=(\{x_2\},\{r_1\}, \lambda: x_2\mapsto r_1)$ is \emph{not} a
sub-CS of $\child$ since $\kappa(x_2)=r_2\neq r_1=\lambda(x_2)$ but
$(\{x_2\},\{r_1\})$ is a sub-RN of $(X',R')$. Hence, this system has two
autocatalytic CS-cores, corresponding to $\child$ and $\pmb{\lambda}$, but
only a single autocatalytic core $(\{x_2\},\{r_1\})$ w.r.t.\ to set
inclusion.

This difference is not limited to RNs with catalysis. Expanding the
autocatalytic reaction in this example by inserting an intermediate product
$x_3$ yields the following RN:
\begin{equation}\label{ex:cat2b}
  \begin{split}
      x_1 + x_2 &\underset{r_{11}}{\longrightarrow}  x_3  \\
      x_3     &\underset{r_{12}}{\longrightarrow} 2x_2  \\
      a + x_2 &\underset{r_2}{\longrightarrow}    2x_1  \\
  \end{split}
\end{equation}
Consider $X'=\{x_1,x_2,x_3\}$ in this order, $R'=\{r_{11},r_{12},r_2\}$,
and the $\kappa$ with $\kappa(x_1)=r_{11}$, $\kappa(x_3)=r_{12}$, and
$\kappa(x_2)=r_2$.  For the child-selection $\child=(X',R',\kappa)$ we have
\begin{equation}\label{eq:autocatCSCoreNonMetzler}
  \mathbf{S[\child]}=
  \begin{pmatrix}
    -1 &  0  &  2 \\ 
     1 & -1  &  0 \\
    -1 &  2  & -1 \\
  \end{pmatrix}
  \qquad \text{and}
  \qquad \mathbf{S}[\child]\begin{pmatrix} 7\\6\\4\end{pmatrix} =
    \begin{pmatrix} 1\\1\\1\end{pmatrix}
\end{equation}
and hence $(X',R',\kappa)$ is an autocatalytic CS. One easily checks that
none of the three sub-CS, whose associated stoichiometric CS-matrix
corresponds to the three principal submatrices of $\mathbf{S}[\child]$ is
autocatalytic since each of them contains a row with only non-positive
entries. Therefore, $\mathbf{S}[\child]$ is an autocatalytic CS-core.

Now consider $X''=\{x_2,x_3\}$, $R''=\{r_{11},r_{12}\}$ and $\lambda:X''\to
R''$ with $\lambda(x_2)=r_{11}$ and $\lambda(x_3)=r_{12}$. Clearly,
$\pmb{\lambda}=(X'',R'',\lambda)$ is a CS.  We have
\begin{equation}
  \mathbf{S}[\pmb{\lambda}] = 
  \begin{pmatrix}
    -1 &  2 \\
     1 & -1 \\
  \end{pmatrix}
  \qquad \text{and}
  \qquad \mathbf{S}[\pmb{\lambda}]\begin{pmatrix} 3\\2\end{pmatrix} =
    \begin{pmatrix} 1\\1\end{pmatrix}
\end{equation}
and hence $(X'',R'',\pmb{\lambda})$ is an autocatalytic CS, and in
particular, also an autocatalytic CS-core. As above, note that
$\kappa(x_3)=r_{12}=\lambda(x_3)$ but $\kappa(x_2)=r_2\ne
r_{11}=\lambda(x_2)$, i.e., $\pmb{\lambda}$ is not a sub-CS of
$\child$. However, $(X'',R'')$ is a sub-RN of $(X',R')$. This implies that
only $(X'',R'')$ is an autocatalytic core.

To better understand the relationships between autocatalytic cores and
autocatalytic CS cores in the general setting of RNs with catalysis, we
analyse their properties in some detail in the following two sections.

\section{Autocatalytic Cores with Catalysts}
\label{sec:corescat}

For networks with catalysts, we generalize the results proved in the SI of
\cite{Blokhuis:20} and \cite{Vassena:24a}, respectively.  We start with a
simple result that follows trivially from the definition of autocatalysis
in RNs without catalysts:
\begin{lemma}
  \label{lem:posentry}
  Let $(X',R')$ be an autocatalytic core. Then for every reaction $r\in
  R'$ there is $x\in X'$ such that $\mathbf{S'}_{xr}>0$.
\end{lemma}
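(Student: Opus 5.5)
We argue by contradiction using minimality. Let $(X',R')$ be an autocatalytic core and suppose some reaction $r_0\in R'$ has no strictly positive entry in its column, i.e.\ $\mathbf{S'}_{xr_0}\le 0$ for all $x\in X'$. The plan is to show that the sub-RN obtained by deleting $r_0$ (and any entities that thereby lose their reactant/product role) is still autocatalytic. This contradicts the inclusion-minimality of the core.

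First, fix a vector $\mathbf{v}\gg 0$ with $\mathbf{S'}\mathbf{v}\gg 0$, which exists by semipositivity (using the strictly positive version noted in the Further Notation paragraph). Decompose
\begin{equation*}
\mathbf{S'}\mathbf{v}=\sum_{r\ne r_0}\mathbf{S'}_{\cdot r}v_r+\mathbf{S'}_{\cdot r_0}v_{r_0}.
\end{equation*}
The column $\mathbf{S'}_{\cdot r_0}$ is nonpositive by assumption. Hence the restricted vector $\mathbf{v}'=(v_r)_{r\ne r_0}$ satisfies $\mathbf{S}[X',R'\setminus\{r_0\}]\mathbf{v}'\ge \mathbf{S'}\mathbf{v}\gg0$. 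So the matrix of $(X',R'\setminus\{r_0\})$ is semipositive. Note that $R'\setminus\{r_0\}\neq\emptyset$: if $R'=\{r_0\}$, then $\mathbf{S'}\mathbf{v}=\mathbf{S'}_{\cdot r_0}v_{r_0}\le 0$, which is impossible. The same argument shows that every row of $\mathbf{S}[X',R'\setminus\{r_0\}]$ has a positive entry.

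The step needing care is well-formedness of the smaller sub-RN. The remaining reactions $r\ne r_0$ keep all their reactants and products in $X'$, because deleting a reaction does not delete entities. Therefore $(X',R'\setminus\{r_0\})$ is still well-formed, since well-formedness only asks each remaining reaction to have a reactant and a product in $X'$. Thus $(X',R'\setminus\{r_0\})$ is an autocatalytic sub-RN strictly contained in $(X',R')$. This contradicts that $(X',R')$ is an autocatalytic core, so every column of $\mathbf{S'}$ contains a strictly positive entry.

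I expect no real obstacle. The only subtleties are invoking the strictly positive form of semipositivity, so that discarding coordinates still leaves a positive image, and checking that well-formedness of the remaining reactions is inherited unchanged.
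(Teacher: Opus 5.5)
Your proof is correct and is essentially the paper's argument: delete the nonpositive column $r_0$, note that $\mathbf{S}[X',R'\setminus\{r_0\}]$ stays semipositive and $(X',R'\setminus\{r_0\})$ stays well-formed, and conclude that this contradicts minimality. Your explicit inequality $\mathbf{S}[X',R'\setminus\{r_0\}]\mathbf{v}'\ge\mathbf{S'}\mathbf{v}\gg0$ and your check that $R'\setminus\{r_0\}\neq\emptyset$ only spell out details the paper leaves implicit.
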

\begin{proof}
  Indirectly, assume that that $\mathbf{S'}_{xr}\le0$ for all $x\in X'$.
  Then the matrix $\mathbf{S''}$ obtained by removing column $r$ from
  $\mathbf{S'}$ is again semipositive, and the sub-RN $(X',R'')$ is
  well-formed, contradicting minimality of $(X',R')$.
\end{proof}
The following statement summarizes the key properties of autocatalytic
cores in general RNs. Its proof follows the arguments in the previous
publications \cite{Blokhuis:20, Vassena:24a} with some subtle differences.

\begin{lemma}
  \label{lem:cs-no-ends} 
  Let $(X',R')$ be an autocatalytic core with $|X'|>1$. Then for every
  $x\in X'$ the following statements hold:
  \begin{itemize}
  \item[(i)]  There is a reaction $r_1\in R'$ such that
    $s_{xr_1}^+ - s_{xr_1}^- >0$, i.e., $\mathbf{S'}_{xr_1}>0$.
  \item[(ii)] There is a reaction $r_2\in R'$ such that
    $s_{xr_2}^->0$ and $s_{xr_2}^+ - s_{xr_2}^-\le 0$, i.e.,
    $\mathbf{S'}_{xr_2}\le0$.
  \item[(iii)] There is a reaction
    $r_x\in R'$ for which $x$ is the only reactant in $X'$, i.e.,\\
    $s^-_{yr_x}>0$ implies $y=x$ for all $y\in X'$.
  \end{itemize}
\end{lemma}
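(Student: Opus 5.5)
The three items will be proved in the order (i), (iii), (ii). Each one uses either semipositivity of $\mathbf{S}'=\mathbf{S}[X',R']$ or the inclusion-minimality of the core. Throughout, fix $\mathbf{v}\gg0$ with $\mathbf{S}'\mathbf{v}\gg0$.

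For (i), I will look at the row $x$ of $\mathbf{S}'\mathbf{v}$. Since $(\mathbf{S}'\mathbf{v})_x=\sum_{r\in R'}\mathbf{S}'_{xr}\mathbf{v}_r>0$ and $\mathbf{v}\gg0$, at least one entry $\mathbf{S}'_{xr_1}$ must be positive.

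The key tool for (iii) is a monotonicity observation. Let $Y\subset X'$ and let $R''\subseteq R'$ be obtained by deleting only reactions whose columns are nonpositive on the rows in $Y$. Then for every $y\in Y$, $(\mathbf{S}[Y,R'']\mathbf{v}|_{R''})_y\ge(\mathbf{S}'\mathbf{v})_y>0$, so $\mathbf{S}[Y,R'']$ is again semipositive, provided $R''\neq\emptyset$.

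Now suppose (iii) fails for some $x$, i.e.\ every $r\in R'$ with $s^-_{xr}>0$ has a second reactant in $X'$. Set $Y=X'\setminus\{x\}$, which is nonempty since $|X'|>1$. Let $R_x$ be the set of reactions in $R'$ whose only product in $X'$ is $x$, and put $R''=R'\setminus R_x$.
\begin{itemize}
\item For $r\in R_x$ and $y\in Y$ we have $s^+_{yr}=0$, hence $\mathbf{S}_{yr}\le0$. So the monotonicity observation applies.
\item $R''\neq\emptyset$. Otherwise every row $y\in Y$ of $\mathbf{S}'$ would be nonpositive, contradicting $(\mathbf{S}'\mathbf{v})_y>0$.
\item $(Y,R'')$ is well-formed. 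Each $r\in R''$ has a product in $Y$ by the definition of $R_x$. It also has a reactant in $Y$: its reactant in $X'$ is either different from $x$, or it is $x$, in which case the failure of (iii) supplies another reactant in $X'$.
\end{itemize}
Thus $(Y,R'')$ is a strictly smaller autocatalytic sub-RN, contradicting minimality. This proves (iii).

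For (ii), item (iii) already guarantees a reaction $r_x$ with $s^-_{xr_x}>0$, so $x$ is a reactant of at least one reaction of $R'$. Suppose every such reaction $r$ had $\mathbf{S}'_{xr}>0$, so that $s^+_{xr}>s^-_{xr}>0$. Then by Remark~\ref{rmk:singlereaction} the pair $(\{x\},\{r_x\})$ is an autocatalytic sub-RN. Since $|X'|>1$ it is strictly smaller than $(X',R')$, again contradicting minimality.

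The main obstacle is the well-formedness bookkeeping in (iii). Removing an entity can destroy a reaction's reactant or its product in the smaller set, and only reactions of the second kind can be discarded without endangering semipositivity. The argument therefore depends on the negation of (iii) making sure that no reaction loses its last reactant. The case $R''=\emptyset$ also has to be excluded explicitly, using $|X'|>1$.
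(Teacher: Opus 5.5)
Your proof is correct, but it is organized differently from the paper's.

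\textbf{The paper's route.} The paper proves (ii) before (iii). It introduces the set $N$ of entities in $X'$ that are reactants of no reaction in $R'$, and the set $Q$ of reactions with no net product outside $N$. A three-case analysis on $Q$ then shows $N=\emptyset$, and the single-reaction argument gives (ii). Item (iii) is proved separately: the paper deletes only the row $x$, keeps all of $R'$, and argues that every column retains a positive entry outside row $x$.

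\textbf{Your route.} You prove (iii) first, deleting $x$ together with the set $R_x$ of reactions that have no product in $X'\setminus\{x\}$, and then read off (ii). This buys two things.

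First, the $N$/$Q$ analysis becomes unnecessary. If $x$ is consumed by no reaction of $R'$, your negated hypothesis holds vacuously. The same deletion argument therefore already forces every $x$ to be a reactant of some $r_x$.

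Second, discarding $R_x$ covers a case that the paper's argument for (iii) skips. The paper claims that a column $r$ whose only positive entry lies in row $x$ must satisfy $s^-_{xr}>0$. This does not follow in general: take $r$ of the form $y\to x$ with $y\in X'\setminus\{x\}$. For such an $r$, the sub-RN $(X'\setminus\{x\},R')$ is not well-formed. These reactions have to be removed, exactly as you do. Your removal uses the same monotonicity device the paper itself applies to $Q$ in part (ii).

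The only thing the paper's ordering gains is that (ii) is established independently of (iii).
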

\begin{proof}
  (i) By assumption, $(X',R')$ is autocatalytic and thus
  $\mathbf{S}'=\mathbf{S}[X',R']$ is semipositive: there exists a positive
  vector $\mathbf{v}\gg0$ such that $\mathbf{S'}\mathbf{v}\gg0$. Thus for
  any given $x\in X'$, there exists at least one $r\in R'$ with
  $\mathbf{S'}_{xr}\mathbf{v}_{r}>0$. Consequently,
  $\mathbf{S'}_{xr}=s_{xr}^+ - s_{xr}^- >0$ and (i) holds.

  (ii) Consider the set of entities $N\coloneqq\{x\in X'\;|\;\forall r\in
  R': s^-_{xr}=0\}$ that are not reactants of any reaction and assume that
  $N\neq \emptyset$.  Denote by $\mathbf{S}^{N'}$ the matrix obtained from
  $\mathbf{S'}$ by deleting any subset of rows $N'\subseteq N$. Clearly,
  $\mathbf{S}^{N'}\mathbf{v}$ is still a positive vector, i.e,
  $\mathbf{S}^{N'}$ is semipositive for every choice of $N'$.  For every
  given $r$, there is an entity $y\not \in N$ such that $s^-_{yr}>0$ ((i)
  of Def.~\ref{def:autSubRN}). Denote by $Q\coloneqq \{r\in R'\;|\;\forall
  y\notin N:\mathbf{S}_{yr}\le0 \}$ the set of reactions for which all
  net-stoichiometric products lie in $N$. Three options appear depending on
  whether (a) $Q=\emptyset$; (b) $Q=R'$; (c) $\emptyset\subsetneq
  Q\subsetneq R'$: we will show a contradiction in each instance.  (a) If
  $Q=\emptyset$, then each reaction $r$ has a product in $X\setminus N$,
  and there is an entity $y\not \in N$ such that $s^-_{yr}>0$. Thus
  $(X'\setminus N,R')$ is an autocatalytic subsystem, contradicting the
  minimality of $(X',R')$. (b) If $Q=R'$, then $\sum_{r\in
    R'}\mathbf{S}_{yr}\mathbf{v}_r\le 0$ for all positive vectors
  $\mathbf{v}$ and all $y\notin N$, i.e., $\mathbf{S'}$ is not
  semipositive, contradicting the assumption that $(X',R')$ is
  autocatalytic. (c) If $\emptyset\subsetneq Q\subsetneq R'$, denote by
  $\mathbf{S''}$ the matrix obtain from $\mathbf{S'}$ by removing all
  columns $r\in Q$. Then $0<\sum_{r\in R'} \mathbf{S'}_{yr}\mathbf{v}_r\le
  \sum_{r\in R'\setminus Q} \mathbf{S'}_{yr}\mathbf{v}_r$ for all $y\in
  X\setminus N$. Denoting by $\mathbf{v'}$ the vector obtained from
  $\mathbf{v}$ by removing the reactions indexed by $r\in Q$ we have
  $\mathbf{S'}[X'\setminus N,R'\setminus Q]\mathbf{v'}\gg 0$, i.e.,
  $\mathbf{S'}[X'\setminus N,R'\setminus Q]$ is semipositive.  Moreover,
  for every $r\in R'\setminus Q$ there is $y\in X'\setminus N$ such that
  $s^-_{yr}>0$ and, by construction, there is $z\in X'\setminus N$ such
  that $\mathbf{S''}_{zr}>0$, and hence $s^+_{zr}>0$. Hence $(X'\setminus
  N,R'\setminus Q)$ is an autocatalytic subsystem strictly contained in
  $(X',R')$, contradicting the assumption that $(X',R')$ is an
  autocatalytic core. {Therefore, $N=\emptyset$, and thus} for every $x\in
  X'$ there is a reaction $r''$ such that $s^-_{xr''}>0$.  Finally, if
  $s_{xr''}^+ - s_{xr''}^->0$, i.e., $s_{xr''}^+ > s_{xr''}^-$, then
  $(\{x\},\{r''\})$ is an autocatalytic subsystem strictly contained in
  $(X',R')$, again contradicting the assumption that $(X',R')$ with
  $|X'|\ge 2$ is an autocatalytic core. Thus (ii) holds.

  (iii) Indirectly suppose that there exists one entity $x\in X'$ such that
  for every reaction $r$ with $s^-_{xr}>0$ there is a $x_r\ne x$ such that
  $s^-_{x_r r}>0$. Consider the subsystem $(X'\setminus\{x\},R')$ with
  matrix $\mathbf{S}^x$ obtained from $\mathbf{S'}$ by deleting row
  $x$. Then $\mathbf{S}^x\mathbf{v}$ is still a strictly positive vector
  obtained by removing the entry indexed by $x$ from
  $\mathbf{S'}\mathbf{v}$, and hence $\mathbf{S}^x$ is semipositive.  By
  Lemma~\ref{lem:posentry}, every column of $\mathbf{S'}$ contains at least
  one positive entry.  Suppose $\mathbf{S}'_{xr}>0$ is the only positive
  entry in column $r$. Then, $s^-_{xr}>0$ and $\mathbf{S}_{xr}>0$ implies
  $s^+_{xr}>x^-_{xr}>0$, i.e., $(\{x\},\{r\})$ is an autocatalytic sub-RN,
  contradicting the assumption that $(X',R')$ is an autocatalytic core. For
  every reaction $r$, therefore, there is an $y\in X'$ with
  $\mathbf{S}_{yr}>0$ with $y\ne x$. By construction, we have $x_r\ne x$
  with $s^-_{x_rr}>0$ and thus $(X'\setminus\{x\},R')$ is a strictly
  smaller autocatalytic subsystem of $(X',R')$, contradicting that
  $(X',R')$ is an autocatalytic core. Thus, statement (iii) holds.
\end{proof}

Statements (i) and (ii) in Lemma~\ref{lem:cs-no-ends} imply that
autocatalytic cores satisfy entity autonomy, which some authors require as
part of their definition of autocatalysis \cite{Barenholz:17,Blanco:24}.

The following statement is a variation on \cite[Thm.4.4]{Johnson:94}: An
$n\times m$ matrix $\mathbf{A}$ with $n<m$ is semipositive if and only if
there is a semipositive $m\times m$ square submatrix $\mathbf{A}$.

\begin{lemma} \cite[SI-Lem.6]{Vassena:24a}
  \label{lem:LA}
  Let $\mathbf{A}$ be a semipositive $n\times m$ matrix.  If
  $\rank\mathbf{A}<m$ then there is a column $r$ such that the
  $n\times (m-1)$ matrix $\mathbf{A}^*$ obtained from $\mathbf{A}$ by
  deleting column $r$ is again semipositive.
\end{lemma}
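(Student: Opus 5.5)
We prove Lemma~\ref{lem:LA} by moving a witness vector inside the kernel of $\mathbf{A}$ until one of its coordinates becomes zero. Since $\mathbf{A}$ is semipositive, we fix $\mathbf{v}\gg0$ with $\mathbf{A}\mathbf{v}\gg0$; as noted in the preliminaries, the strict form of the witness can always be assumed. The hypothesis $\rank\mathbf{A}<m$ means that $\ker\mathbf{A}$ is nontrivial, so we pick $\mathbf{w}\neq0$ with $\mathbf{A}\mathbf{w}=0$. Replacing $\mathbf{w}$ by $-\mathbf{w}$ if necessary, we may assume that $\mathbf{w}$ has at least one strictly positive entry.

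Next we consider the line $\mathbf{v}(t)\coloneqq\mathbf{v}-t\mathbf{w}$ for $t\ge0$. Along this line $\mathbf{A}\mathbf{v}(t)=\mathbf{A}\mathbf{v}\gg0$ for every $t$, so the image never changes. The coordinates of $\mathbf{v}(t)$ can only decrease where $\mathbf{w}_r>0$. We therefore define
\[
t^*\coloneqq\min\{\mathbf{v}_r/\mathbf{w}_r \;|\; \mathbf{w}_r>0\}>0,
\]
and let $r$ be an index attaining this minimum. Then $\mathbf{v}(t^*)\ge0$ entrywise and $\mathbf{v}(t^*)_r=0$.

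Deleting column $r$ from $\mathbf{A}$ and entry $r$ from $\mathbf{v}(t^*)$ gives a matrix $\mathbf{A}^*$ and a vector $\mathbf{v}^*\ge0$ with $\mathbf{A}^*\mathbf{v}^*=\mathbf{A}\mathbf{v}(t^*)\gg0$. This identity holds because the deleted coordinate contributes nothing to the product. It remains to check that $\mathbf{v}^*>0$. Since $\mathbf{A}^*\mathbf{v}^*\gg0$ and $n\ge1$, we cannot have $\mathbf{v}^*=0$, so $\mathbf{v}^*$ is nonnegative and nonzero. Hence $\mathbf{A}^*$ is semipositive in the sense $\mathbf{v}>0$ with $\mathbf{A}^*\mathbf{v}\gg0$, which the preliminaries state is equivalent to having a strictly positive witness.

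The one delicate point is this final equivalence, i.e.\ upgrading the nonnegative witness $\mathbf{v}^*$ to a strictly positive one. The argument is a continuity perturbation: $\mathbf{A}^*(\mathbf{v}^*+\varepsilon\mathbf{1})=\mathbf{A}^*\mathbf{v}^*+\varepsilon\mathbf{A}^*\mathbf{1}$ is still $\gg0$ for sufficiently small $\varepsilon>0$, while $\mathbf{v}^*+\varepsilon\mathbf{1}\gg0$. The case $m=1$ cannot arise, because $\rank\mathbf{A}<1$ would force $\mathbf{A}=0$, and the zero matrix is not semipositive.
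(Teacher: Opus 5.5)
Your proof is correct and complete. Shifting the strictly positive witness $\mathbf{v}$ along a kernel vector $\mathbf{w}$ (oriented to have a positive entry) up to $t^*=\min\{\mathbf{v}_r/\mathbf{w}_r \mid \mathbf{w}_r>0\}$ keeps $\mathbf{A}\mathbf{v}(t^*)=\mathbf{A}\mathbf{v}\gg0$ while zeroing coordinate $r$, so column $r$ can be dropped; your handling of the nonzero-witness and $m=1$ edge cases is also sound.

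The paper gives no argument of its own for this lemma; it only imports it from \cite{Vassena:24a}, so there is nothing in the text to compare against. Your kernel-shift (Carath\'eodory-type) reduction is the standard argument for a statement of this kind.
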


\begin{lemma}
  \label{lem:square+kappa}
  If $(X',R')$ is an autocatalytic core, then $|X'|=|R'|$ and $\mathbf{S'}$
  is invertible. Moreover, for every $x\in X'$ there is a unique reaction
  $\kappa(x)\in R'$ such that $s^-_{x\kappa(x)}>0$.
\end{lemma}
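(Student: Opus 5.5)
The plan is to prove the two inequalities $|R'|\le|X'|$ and $|X'|\le|R'|$ separately. The first comes from Lemma~\ref{lem:LA} together with minimality. The second comes from the map $x\mapsto r_x$ supplied by Lemma~\ref{lem:cs-no-ends}(iii). Uniqueness of $\kappa$ then follows by a counting argument once the map is known to be a bijection. The case $|X'|=1$ is treated separately, since Lemma~\ref{lem:cs-no-ends} assumes $|X'|>1$.

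\emph{Step 1: full column rank.} Let $\mathbf{S}'=\mathbf{S}[X',R']$, which is semipositive by assumption. Suppose $\rank\mathbf{S}'<|R'|$. Lemma~\ref{lem:LA} then gives a column $r$ whose deletion leaves a semipositive matrix $\mathbf{S}[X',R'\setminus\{r\}]$. Well-formedness is a condition on each reaction separately: every remaining reaction still has a reactant and a product in $X'$. Hence $(X',R'\setminus\{r\})$ is well-formed. It is also nonempty, because a semipositive matrix must have at least one column. It is therefore a strictly smaller autocatalytic sub-RN, which contradicts minimality. So $\rank\mathbf{S}'=|R'|$, and in particular $|R'|\le|X'|$. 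If $|X'|=1$, this already forces $|R'|=1$. In that case $\mathbf{S}'$ is the $1\times1$ positive matrix of Remark~\ref{rmk:singlereaction}, hence invertible, and uniqueness of $\kappa$ is trivial.

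\emph{Step 2: the reverse inequality.} Assume $|X'|>1$. For each $x\in X'$, Lemma~\ref{lem:cs-no-ends}(iii) provides a reaction $r_x\in R'$ whose only reactant in $X'$ is $x$. The map $x\mapsto r_x$ is injective: if $r_x=r_y$, then $x$ and $y$ are both the unique reactant in $X'$ of the same reaction, so $x=y$. This gives $|X'|\le|R'|$. Combined with Step~1, we get $|X'|=|R'|$ and $\rank\mathbf{S}'=|R'|$, so the square matrix $\mathbf{S}'$ is invertible.

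\emph{Step 3: uniqueness of $\kappa$.} Since $|X'|=|R'|$, the injective map $x\mapsto r_x$ is a bijection onto $R'$, so every $r\in R'$ equals $r_y$ for exactly one $y\in X'$. Now take $x\in X'$ and any $r\in R'$ with $s^-_{xr}>0$. Writing $r=r_y$, the defining property of $r_y$ forces $x=y$, so $r=r_x$. Thus $\kappa(x)\coloneqq r_x$ is the unique reaction in $R'$ having $x$ as a reactant, and $\kappa$ is in fact a CS.

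I expect the main obstacle to be Step~1. One has to check carefully that deleting a column preserves all the hypotheses needed to contradict minimality, namely well-formedness of the smaller sub-RN and that it remains a genuine sub-RN. The rest is counting.
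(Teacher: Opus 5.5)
Your proof is correct and follows the paper's argument: Lemma~\ref{lem:LA} together with minimality forces $\rank\mathbf{S}'=|R'|\le|X'|$, the injective map $x\mapsto r_x$ from Lemma~\ref{lem:cs-no-ends}(iii) gives $|X'|\le|R'|$, and the resulting bijection yields the unique $\kappa$. The differences are minor tidyings of steps the paper leaves implicit. You treat $|X'|=1$ separately, where Lemma~\ref{lem:cs-no-ends} formally does not apply, and you check injectivity and uniqueness explicitly; you also take "each $x$ is a reactant" directly from part (iii), whereas the paper cites part (ii) for this.
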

\begin{proof}
  From Lemma~\ref{lem:cs-no-ends} (iii), we immediately conclude that
  $|X'|\le |R'|$. By Lemma~\ref{lem:LA}, there are reactions that can be
  removed from $(X',R')$ unless $|R'|=\rank\mathbf{S'}$. Hence minimality
  w.r.t.\ the number of reactions implies $\rank\mathbf{S'}\le |X'|\le
  |R'|=\rank\mathbf{S'}$. Therefore we have $|X'|=|R'|=\rank\mathbf{S'}$,
  and $\mathbf{S'}$ is an invertible square matrix. Using
  Lemma~\ref{lem:cs-no-ends} (iii) and $|X'|=|R'|$ every reaction $r$ has
  an associated $x\in X'$ such that $x$ is the only reactant of $r$. By
  Lemma~\ref{lem:cs-no-ends} (ii), every $x\in X'$ is reactant of some
  reaction. It follows that $\kappa(x)=r_x$ is a bijection that is uniquely
  determined by the pairs $(x,r)\in X'\times R'$ with $s^-_{xr}>0$.
\end{proof}
\begin{corollary}
  \label{cor:acore-uniqueCS}
  If $(X',R')$ is an autocatalytic core, then there is a unique
  child-selection $\child=(X',R',\kappa)$ on $(X',R')$.
\end{corollary}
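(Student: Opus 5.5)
The plan is to read the corollary off Lemma~\ref{lem:square+kappa}, splitting into existence and uniqueness of the child-selection on $(X',R')$.

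\textbf{Existence.} First treat the case $|X'|=1$, where $X'=\{x\}$ and $R'=\{r\}$. By Remark~\ref{rmk:singlereaction}, $s^-_{xr}>0$, so $\kappa(x)=r$ is the only possible map and it is a CS. For $|X'|>1$, I will take the map $\kappa:X'\to R'$ given by Lemma~\ref{lem:square+kappa}, i.e.\ $\kappa(x)=r_x$. This map satisfies $s^-_{x\kappa(x)}>0$, and the proof of that lemma shows it is a bijection, since $|X'|=|R'|$ and every reaction has exactly one reactant in $X'$. Hence $(X',R',\kappa)$ satisfies Definition~\ref{def:CS}.

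\textbf{Uniqueness.} Let $\pmb{\lambda}=(X',R',\lambda)$ be any CS on $(X',R')$. By definition $s^-_{x\lambda(x)}>0$ for all $x\in X'$. Lemma~\ref{lem:square+kappa} says that for each $x$ there is a unique reaction $\kappa(x)\in R'$ with $s^-_{x\kappa(x)}>0$. Therefore $\lambda(x)=\kappa(x)$ for every $x$, so $\lambda=\kappa$.

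The only delicate point is the reading of ``unique'' in Lemma~\ref{lem:square+kappa}. If it were taken to mean only that the bijection is unique, I would argue directly instead. By Lemma~\ref{lem:cs-no-ends}(iii) and $|X'|=|R'|$, the map $x\mapsto r_x$ is injective, hence surjective, so each $r\in R'$ equals $r_y$ for some $y$. Its reactant set within $X'$ is then exactly $\{y\}$. Consequently, $s^-_{xr}>0$ with $x\in X'$ forces $r=r_x$, which pins down $\lambda(x)=r_x$.
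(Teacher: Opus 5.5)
Your proposal is correct and follows the paper's route: the corollary is read off directly from Lemma~\ref{lem:square+kappa}, whose proof (via Lemma~\ref{lem:cs-no-ends}(iii) and $|X'|=|R'|$) shows that the pairs $(x,r)\in X'\times R'$ with $s^-_{xr}>0$ form a bijection, and that bijection is the only possible child-selection on $(X',R')$. Your separate treatment of the case $|X'|=1$ via Remark~\ref{rmk:singlereaction} is sensible extra care, since Lemma~\ref{lem:cs-no-ends} is stated only for $|X'|>1$.
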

As an immediate consequence, we have
\begin{corollary}
  If $(X',R')$ is an autocatalytic core, then $\child=(X',R',\kappa)$ with
  $\kappa$ specified in Lemma~\ref{lem:square+kappa} is an autocatalytic
  CS-core.
\end{corollary}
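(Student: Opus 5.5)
The plan is to check the two defining requirements of an autocatalytic CS-core directly, using Lemma~\ref{lem:square+kappa} and Corollary~\ref{cor:acore-uniqueCS}.

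\emph{Step 1: $\child$ is an autocatalytic CS.} By Lemma~\ref{lem:square+kappa}, $|X'|=|R'|$. For each $x\in X'$ there is a unique $\kappa(x)\in R'$ with $s^-_{x\kappa(x)}>0$, and the proof of that lemma shows that $\kappa$ is a bijection. Hence $\child=(X',R',\kappa)$ satisfies Definition~\ref{def:CS}. It is autocatalytic because $(X',R')$ is an autocatalytic sub-RN by hypothesis: the sub-RN is well-formed and $\mathbf{S}[X',R']$ is semipositive.

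\emph{Step 2: no proper sub-CS is autocatalytic.} Let $\tilde{\child}=(X'',R'',\tilde\kappa)$ be a proper sub-CS of $\child$. Then $X''\subseteq X'$ and $R''=\kappa(X'')\subseteq R'$. Because $\kappa$ is a bijection and $\tilde\kappa$ is its restriction, properness forces $X''\subsetneq X'$, and therefore $R''\subsetneq R'$. So $(X'',R'')$ is a strictly smaller sub-RN of $(X',R')$.

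Suppose $\tilde{\child}$ were autocatalytic. By definition this means $(X'',R'')$ is well-formed and $\mathbf{S}[X'',R'']$ is semipositive, so $(X'',R'')$ is an autocatalytic sub-RN. Since $(X',R')$ is an autocatalytic core, it contains no strictly smaller autocatalytic sub-RN, which gives a contradiction.

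There is essentially no obstacle here. The only point needing care is that ``proper'' sub-CS must translate into a strictly smaller sub-RN, and the bijectivity of $\kappa$ is what guarantees this.
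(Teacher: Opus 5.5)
Your proof is correct and follows the same route as the paper. The paper states this as an immediate consequence of Lemma~\ref{lem:square+kappa}, and in Section~\ref{sec:autocat} it already sketches exactly your argument: $\child$ is an autocatalytic CS on $(X',R')$, and any proper sub-CS $(X'',\kappa(X''),\tilde\kappa)$ has $X''\subsetneq X'$, so an autocatalytic sub-CS would give a strictly smaller autocatalytic sub-RN, contradicting minimality. Your explicit use of the bijectivity of $\kappa$ to turn properness of the sub-CS into strict containment of the sub-RN is the one detail the paper leaves implicit.
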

Finally, the uniqueness of the reactant for each reaction implies that all
off-diagonal entries $s^-_{x\kappa(y)}=0$ for $y\ne x$ and thus
$\SM[\child]_{xy}= \mathbf{S}_{x\kappa(y)}=s^+_{x\kappa(y)}\ge 0$. On the
other hand, we have $\SM[\child]_{xx}=\mathbf{S}_{x\kappa(x)}\le 0$
unless $|X'|=|R'|=1$.
\begin{corollary}\label{cor:diagonal}
  If $(X',R')$ is an autocatalytic core with $|X'|=|R'|\ge 2$. Then
  $\SM[\child]$ is a Metzler matrix with a non-positive diagonal.
\end{corollary}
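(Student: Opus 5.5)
The corollary follows by combining Lemma~\ref{lem:square+kappa} with the sign information in Lemma~\ref{lem:cs-no-ends}. We compute the entries of $\SM[\child]$ directly from the stoichiometric coefficients. First, Lemma~\ref{lem:square+kappa} gives $|X'|=|R'|$ and a bijection $\kappa:X'\to R'$. For each reaction $r\in R'$, the entity $x=\kappa^{-1}(r)$ is the \emph{only} reactant of $r$ within $X'$: the proof of that lemma identifies $\kappa(x)$ with the reaction $r_x$ of Lemma~\ref{lem:cs-no-ends}~(iii), for which $s^-_{yr_x}>0$ with $y\in X'$ forces $y=x$. In particular $s^-_{x\kappa(x)}>0$, so $\child=(X',R',\kappa)$ is a child-selection in the sense of Definition~\ref{def:CS}, and $\SM[\child]_{xy}=\mathbf{S}_{x\kappa(y)}$ is well defined.

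\textbf{Off-diagonal entries.} Take $x\neq y$ in $X'$. The reaction $\kappa(y)$ has $y$ as its unique reactant in $X'$, so $s^-_{x\kappa(y)}=0$. Hence
\begin{equation*}
\SM[\child]_{xy}=s^+_{x\kappa(y)}-s^-_{x\kappa(y)}=s^+_{x\kappa(y)}\ge 0 .
\end{equation*}
Thus every off-diagonal entry is nonnegative, which is the Metzler property.

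\textbf{Diagonal entries.} This is the step that genuinely uses $|X'|\ge 2$. Fix $x\in X'$. By Lemma~\ref{lem:cs-no-ends}~(ii) there is a reaction $r_2\in R'$ with $s^-_{xr_2}>0$ and $\mathbf{S}'_{xr_2}\le 0$. Since $r_2$ has a reactant $x$ in $X'$, uniqueness of the reactant forces $\kappa^{-1}(r_2)=x$, i.e. $r_2=\kappa(x)$. Therefore $\SM[\child]_{xx}=\mathbf{S}_{x\kappa(x)}\le 0$.

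One could also argue directly: if $s^+_{x\kappa(x)}>s^-_{x\kappa(x)}>0$, then $(\{x\},\{\kappa(x)\})$ would be an autocatalytic sub-RN by Remark~\ref{rmk:singlereaction}. For $|X'|\ge 2$ this sub-RN is strictly smaller than $(X',R')$, contradicting minimality. This is the same argument used inside the proof of Lemma~\ref{lem:cs-no-ends}~(ii).

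The only delicate point is confirming that the reaction $r_2$ supplied by Lemma~\ref{lem:cs-no-ends}~(ii) coincides with $\kappa(x)$, and this follows from the uniqueness of reactants established above. Once that is settled, the diagonal is nonpositive and the off-diagonal is nonnegative, so $\SM[\child]$ is a Metzler matrix with non-positive diagonal.
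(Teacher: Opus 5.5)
Your proposal is correct and is essentially the paper's argument. Uniqueness of the reactant of $\kappa(y)$ within $X'$ (Lemma~\ref{lem:square+kappa}, via Lemma~\ref{lem:cs-no-ends}~(iii)) forces $s^-_{x\kappa(y)}=0$ for $x\neq y$, so the off-diagonal entries are nonnegative. The diagonal entry $\mathbf{S}_{x\kappa(x)}$ is nonpositive because otherwise $(\{x\},\{\kappa(x)\})$ would be a strictly smaller autocatalytic sub-RN when $|X'|\ge 2$; your identification of the reaction $r_2$ from Lemma~\ref{lem:cs-no-ends}~(ii) with $\kappa(x)$ simply makes explicit what the paper compresses into the phrase ``unless $|X'|=|R'|=1$''.
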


The following result echoes Prop.~4 in the SI of \cite{Blokhuis:20}.
\begin{lemma}
  \label{lem:Blokhuis4}
  If $(X',R')$ is an autocatalytic core, then the matrix $\SM[\child]$ is
  irreducible.
\end{lemma}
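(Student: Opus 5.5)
We argue by contradiction. If $|X'|=1$ the $1\times 1$ matrix $\SM[\child]$ is trivially irreducible, so assume $|X'|\ge 2$. By Corollary~\ref{cor:diagonal}, $\mathbf{A}\coloneqq\SM[\child]$ is then a Metzler matrix whose off-diagonal entries are $\mathbf{A}_{xy}=s^+_{x\kappa(y)}\ge 0$. Suppose $\mathbf{A}$ is reducible. Then there is a nonempty proper subset $Y\subsetneq X'$ that is closed in the digraph of $\mathbf{A}$, in the sense that $\mathbf{A}_{xy}=0$ for all $x\in X'\setminus Y$ and $y\in Y$. Equivalently, after simultaneous permutation of rows and columns,
\begin{equation*}
  \mathbf{A}=\begin{pmatrix} \mathbf{A}_{11} & \mathbf{A}_{12}\\ 0 & \mathbf{A}_{22}\end{pmatrix},
\end{equation*}
where $\mathbf{A}_{11}=\mathbf{A}[Y,Y]$ and $\mathbf{A}_{22}=\mathbf{A}[X'\setminus Y,X'\setminus Y]$. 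Here $Y$ indexes the rows of the first block, and since columns are indexed via $\kappa$, the first block of columns corresponds to the reactions $\kappa(Y)$.

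The natural candidate for a smaller autocatalytic sub-RN is the lower-right block, i.e., $(Z,\kappa(Z))$ with $Z\coloneqq X'\setminus Y$. Semipositivity is inherited directly from the block-triangular structure. Take $\mathbf{v}\gg0$ with $\mathbf{A}\mathbf{v}\gg0$; such a $\mathbf{v}$ exists because $\SM'$ is semipositive and $\mathbf{A}$ differs from $\SM'$ only by a column permutation. Write $\mathbf{v}=(\mathbf{v}_1,\mathbf{v}_2)$ according to the block decomposition. The rows indexed by $Z$ then read $\mathbf{A}_{22}\mathbf{v}_2\gg 0$, because the block below $\mathbf{A}_{11}$ vanishes. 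Since $\mathbf{A}_{22}$ is the column-reordering of $\mathbf{S}[Z,\kappa(Z)]$, the latter is semipositive.

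It remains to check that $(Z,\kappa(Z))$ is well-formed, and this is the main obstacle. Every $r=\kappa(z)$ with $z\in Z$ has the reactant $z\in Z$ by the definition of a child-selection. For a product, I would use Lemma~\ref{lem:posentry}, or equivalently the semipositivity just established. Each column of $\mathbf{A}_{22}$ must contain a positive entry somewhere in $\mathbf{A}$, and in the lower block its entries are $\mathbf{A}_{z'z}=s^+_{z'\kappa(z)}$ off the diagonal and $\mathbf{A}_{zz}\le 0$ on it. The danger is a column of $\mathbf{A}_{22}$ that is entirely zero off the diagonal: then $\kappa(z)$ has no product in $Z$, and all its products lie in $Y$ or outside $X'$. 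In that case I would discard such reactions, as in step (c) of Lemma~\ref{lem:cs-no-ends}(ii). Removing a column $\kappa(z)$ whose entries on the rows $Z$ are all $\le 0$ does not destroy positivity of $\mathbf{A}_{22}\mathbf{v}_2$ restricted to the remaining rows. We also remove the corresponding row $z$: its $\mathbf{A}_{22}\mathbf{v}_2$-entry only loses the nonpositive diagonal term $\mathbf{A}_{zz}v_z$ from the other rows' perspective, and the row itself is simply dropped. Iterating this removal leaves either a nonempty well-formed semipositive sub-RN or nothing. The second possibility is excluded because a semipositive matrix needs positive entries in every row, and these entries come from off-diagonal products within $Z$, whose reactions are then not removed.

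Each of the sub-RNs produced is strictly contained in $(X',R')$, because $Y\ne\emptyset$. This contradicts the minimality of the autocatalytic core $(X',R')$, and hence $\SM[\child]$ is irreducible.
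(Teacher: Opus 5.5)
Your proof is correct and has the same skeleton as the paper's. Both write $\SM[\child]$ in block-triangular form and pass semipositivity to the diagonal block whose rows see only its own columns (your $\mathbf{A}_{22}$, the paper's $\tilde{\SM}$). Both then get well-formedness from positive Metzler off-diagonal entries and contradict minimality.

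The real difference is how you ensure that every reaction of the block has a product inside it. The paper takes the block irreducible from the outset, i.e.\ $Z$ minimal among nonempty closed sets, so every column automatically has a positive off-diagonal entry. You take an arbitrary closed $Z$ and prune. Your pruning is in effect a descent through smaller closed sets: if column $z$ of $\mathbf{A}[W,W]$ vanishes off the diagonal, then $W\setminus\{z\}$ is again closed and the remaining entries of $\mathbf{A}[W,W]\mathbf{v}_W$ are literally unchanged. The invariant $\mathbf{A}[W,W]\mathbf{v}_W\gg 0$, together with the nonpositive diagonal from Cor.~\ref{cor:diagonal}, then forces $|W|\ge 2$ at every stage. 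That invariant is what rules out ending with nothing. Your stated reason, that reactions supplying positive row entries ``are then not removed'', does not work: such a reaction can lose its only in-block product when that row is pruned later.

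Your route treats the $1\times 1$ case explicitly. The paper's sentence ``each column of $\tilde{\SM}$ has a nonzero off-diagonal entry'' passes over it silently; there the contradiction comes from $\tilde{\SM}>0$ versus the nonpositive diagonal. The paper's route is shorter, since minimality of the closed set makes pruning superfluous.

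One inaccuracy specific to catalysis: a column of $\mathbf{A}_{22}$ vanishing off the diagonal does not mean $\kappa(z)$ has no product in $Z$. The entity $z$ may itself be a catalyst of $\kappa(z)$ with $s^+_{z\kappa(z)}\le s^-_{z\kappa(z)}$. Pruning such a column is then unnecessary but harmless.
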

\begin{proof}
  Let $(X',R')$ be an autocatalytic core and suppose $\SM[\child]$ is
  reducible. Then, after simultaneously reordering the rows $X'$ and
  corresponding columns $R'$, it can be written in the form
  $\SM[\child]={\scriptsize \begin{pmatrix} \tilde{\SM} & \mathbf{0}
      \\ \mathbf{X} & \mathbf{Y} \end{pmatrix}}$, where $\tilde{\SM}$ is an
  irreducible matrix. Its rows and columns correspond to a subset
  $X''\subsetneq X'$ and $R''=\kappa(X'')$ are the corresponding reactions.
  In particular, the restriction of the bijection $\tilde{\kappa}$ of
  $\kappa$ to $X''$ is again a bijection $\tilde{\kappa}:X''\to R''$, so
  that $\tilde{\SM}\coloneqq \SM[\tilde{\child}]$ is the CS-matrix of the
  CS $\tilde{\child}$. Take the positive vector $\mathbf{v}\gg0$ such that
  $\SM[\child]\mathbf{v}\gg0$, then the restriction $\tilde{\mathbf{v}}$ of
  $\mathbf{v}$ to $R''$ satisfies
  $(\tilde{\SM}\tilde{\mathbf{v}})_x=(\SM[\child]\mathbf{v})_x>0$ for all
  rows $x\in X''$, and thus $\tilde{\SM}\tilde{\mathbf{v}}\gg0$, i.e.,
  $\tilde{\SM}$ is semipositive. Since $\tilde{\SM}$ is irreducible, each
  column $r$ of $\tilde{\SM}$ has a nonzero off-diagonal entry, which by
  Cor.~\ref{cor:diagonal} applied to $\SM[\child]$ must be positive, say
  $\tilde{\SM}_{xr}>0$, with $x\neq \tilde{\kappa}^{-1}(r)$. In particular,
  $\tilde{\SM}_{xr}=\SM[\child]_{xr}=s^+_{xr}-s^-_{xr}>0$ yields
  $s^+_{xr}>0$. On the other hand, for each column $r$, the diagonal entry
  corresponds to $\tilde{\SM}_{\tilde{\kappa}^{-1}(r)r}$, and thus by
  Def.~\ref{def:CS} of CS, we have $s^-_{\tilde{\kappa}^{-1}(r)r}>0$.
  Therefore, $(X'',R'')$ is an autocatalytic subsystem strictly contained
  in $(X',R')$, contradicting the assumption that $(X',R')$ is an
  autocatalytic core. This in turn implies that $\SM[\child]$ cannot be
  reducible.
\end{proof}

\paragraph{Column orders.} We conclude this section by noting that, in
Def.~\ref{def:autSubRN}, the order of the columns (which depends on the
network labeling) is irrelevant, since semipositivity does not depend on
it. Indeed, if $Av\gg 0$ for some $v>0$, then for any matrix $A_\pi$
obtained from $A$ by permuting its columns according to a permutation
$\pi$, there exists $v_\pi>0$ such that $A_\pi v_\pi \gg 0$. It suffices to
take $v_\pi$ to be the vector obtained from $v$ by applying the same
permutation $\pi$ to its entries. In contrast, \emph{irreducibility} is a
matrix-property that \emph{does} depend on the column ordering and for this
reason Lemma \ref{lem:Blokhuis4} requires the autocatalytic core to be
given in the order of the associated CS.  This difference is crucial in
particular for networks with catalysis. Consider the following RN:
\begin{equation}
  \begin{split}
    a + x_1 &\underset{r_1}{\rightarrow}x_1+x_2\\
    b + x_2 &\underset{r_2}{\rightarrow}x_1+x_2\\
  \end{split},
\end{equation}
and the associated stoichiometric matrix, which is an autocatalytic core
$(\{x_1,x_2\},\{r_1,r_2\})$
\begin{equation}\label{eq:csred}
\SM[\child] = \begin{pmatrix}
    0 & 1\\
    1 & 0\\
\end{pmatrix}.
\end{equation}
Since the matrix is given in CS-form, i.e., the ordering of the columns
follows the CS bijection $\kappa(x_1)=r_1$ and $\kappa(x_2)=r_2$, then
Lemma~\ref{lem:Blokhuis4} applies and the matrix \eqref{eq:csred} is indeed
irreducible. However, for the ordering $\langle r_2,r_1\rangle$ of the
columns, which does not follow from the CS bijection $\kappa$, we obtain
the reducible matrix
\begin{equation}
  \begin{pmatrix}
    1 & 0\\
    0 & 1\\
  \end{pmatrix},
\end{equation}
The related argument in \cite{Blokhuis:20} did not include this subtlety
because (i) it is restricted to networks without catalysis and (ii) the
analogous ``irreducibility'' property was not stated as such in matrix
terms, but rather in terms of strong-connectedness of the associated graph,
see also Sec.~\ref{sec:fluffles}.

\paragraph{Eigenvalues and the determinant of irreducible Metzler CS matrices.} We
conclude this section with two results that will be useful later on. The
first is a consequence of the Perron-Frobenius theorem:
\begin{proposition}
  \label{prop:irMHu=sp}
  Let $\mathbf{A}$ be an irreducible Metzler matrix. Then $\mathbf{A}$ is
  is Hurwitz-unstable if and only if it is semipositive.
\end{proposition}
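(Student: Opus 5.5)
The plan is to shift $\mathbf{A}$ by a multiple of the identity so that Perron--Frobenius applies, and then to prove each implication separately. Since $\mathbf{A}$ is Metzler, there is $c>0$ such that $\mathbf{B}\coloneqq\mathbf{A}+c\mathbf{I}$ is entrywise nonnegative. Because irreducibility depends only on the off-diagonal pattern, $\mathbf{B}$ is also irreducible. The Perron--Frobenius theorem then gives a real simple eigenvalue $\rho(\mathbf{B})>0$ with a strictly positive right eigenvector $\mathbf{u}\gg0$ and a strictly positive left eigenvector $\mathbf{w}\gg0$. Every other eigenvalue $\mu$ of $\mathbf{B}$ satisfies $|\mu|\le\rho(\mathbf{B})$, and hence $\operatorname{Re}\mu\le\rho(\mathbf{B})$. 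The eigenvalues of $\mathbf{A}$ are those of $\mathbf{B}$ shifted by $-c$. Consequently $\lambda^*\coloneqq\rho(\mathbf{B})-c$ is a real eigenvalue of $\mathbf{A}$ with $\mathbf{A}\mathbf{u}=\lambda^*\mathbf{u}$ and $\mathbf{w}^T\mathbf{A}=\lambda^*\mathbf{w}^T$, and it is the spectral abscissa of $\mathbf{A}$. Hurwitz-instability, meaning some eigenvalue has nonnegative real part, is therefore equivalent to $\lambda^*\ge 0$.

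For the implication ``semipositive $\Rightarrow$ Hurwitz-unstable'', take $\mathbf{v}\gg0$ with $\mathbf{A}\mathbf{v}\gg0$. Multiplying by the positive left Perron vector gives
\[
\lambda^*\,\mathbf{w}^T\mathbf{v}=\mathbf{w}^T\mathbf{A}\mathbf{v}>0 .
\]
Since $\mathbf{w}^T\mathbf{v}>0$, this yields $\lambda^*>0$, so $\mathbf{A}$ is unstable.

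For the converse, suppose $\lambda^*\ge0$. If $\lambda^*>0$, the Perron vector settles it directly: $\mathbf{A}\mathbf{u}=\lambda^*\mathbf{u}\gg0$, so $\mathbf{A}$ is semipositive.

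The main obstacle is the boundary case $\lambda^*=0$, where $\mathbf{A}$ is singular. In that case, for every $\mathbf{v}>0$, the same pairing gives $\mathbf{w}^T(\mathbf{A}\mathbf{v})=0$. Because $\mathbf{w}\gg0$, the vector $\mathbf{A}\mathbf{v}$ cannot be strictly positive, so $\mathbf{A}$ is not semipositive. The statement is therefore only correct if ``Hurwitz-unstable'' is read as having an eigenvalue with strictly positive real part. Under that convention the case $\lambda^*=0$ is simply stable and is excluded. I would make this convention explicit in the proof and note that the zero-eigenvalue case is exactly the degenerate one. Under this reading, both directions reduce to the sign of the Perron eigenvalue $\lambda^*$ of the shifted nonnegative matrix, tested against the strictly positive left and right Perron vectors.
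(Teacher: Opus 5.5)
Your proof is correct and follows the paper's argument: shift by $c\mathbf{I}$ and apply Perron--Frobenius; the strictly positive right eigenvector gives semipositivity from instability, and pairing with the strictly positive left eigenvector gives the converse. Your caveat about the case $\lambda^*=0$ is accurate. The paper does use the strict convention (it writes ``Hurwitz-unstable, i.e.\ $\mu^*>0$''), so state that definition at the outset instead of first defining instability through a nonnegative real part and then retracting it.
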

\begin{proof}
  The matrix $\mathbf{P}\coloneqq \mathbf{A}+c\mathbf{I}$ with
  $c\coloneqq \max_{i\in \{1,\ldots, n\}} |\mathbf{A}_{ii}|\ge 0$ and
  $\mathbf{I}$ identity matrix is non-negative by construction. It is again
  irreducible, has the same eigenvectors of $\mathbf{A}$ and
  $\lambda=\mu+c$ is an eigenvalue of $\mathbf{P}$ if and only if $\mu$ is
  an eigenvalue of $\mathbf{A}$.  The Perron-Frobenius theorem for
  irreducible matrices states that $\mathbf{P}$ has a positive real
  eigenvalue $\lambda^*$ with corresponding strictly positive right
  eigenvector $\mathbf{v}^*$ and left eigenvector $\mathbf{w}^*$ such that
  $\lambda^* > \Re(\lambda)$ for all eigenvalues $\lambda$ of
  $\mathbf{P}$. For $\mathbf{A}$, this yields an eigenvalue
  $\mu^*=\lambda^*-c$ with strictly positive eigenvectors $\mathbf{v}^*$
  (right) and $\mathbf{w}^*$ (left), such that $\mu^*>\Re(\mu)$ for all
  eigenvalues $\mu$ of $\mathbf{A}$.
  First suppose $\mathbf{A}$ is Hurwitz-unstable, i.e.  $\mu^*>0$. For its
  strictly positive right eigenvector $\mathbf{v}^*\gg0$ we get
  $\mathbf{A}\mathbf{v}^*=\mu^*\mathbf{v}^*\gg 0$, i.e., $\mathbf{A}$ is
  semipositive.
  Conversely, assume that $\mathbf{A}$ is semipositive, i.e., there is a
  vector $\mathbf{v}\gg0$ such that $\mathbf{A}\mathbf{v}\gg0$. Take the
  left eigenvector $\mathbf{w}^*$ of $\mu^*$. Since $\mathbf{w}^*\gg 0$, we
  have:
  $0 \ll (\mathbf{w}^*)^T \mathbf{A} \mathbf{v}=\mu^*
  (\mathbf{w}^*)^T\mathbf{v}$. As also $\mathbf{v}\gg0$ by assumption, this
  yields $\mu^*>0$, i.e., $\mathbf{A}$ is Hurwitz-unstable.
\end{proof}

It is well known that an $n\times n$ matrix $\mathbf{A}$ is
Hurwitz-unstable if $(-1)^{n-1}\det\mathbf{A}>0$. The following result,
first shown in \cite{Vassena:24a} for RNs without explicit catalysis, shows
that autocatalytic cores always satisfy this sign condition for $\det\SM[\child]$. We
include a general proof here for completeness.
\begin{lemma}
  \label{lem:descartes}
  \cite{Vassena:24a}
  Let $(X',R')$ be an autocatalytic core with associated $k\times k$ 
  CS-matrix $\SM[\child]$. Then $\SM[\child]$ has one single real-positive 
  eigenvalue and in particular $\sgn\det\SM[\child]=(-1)^{k-1}$.
\end{lemma}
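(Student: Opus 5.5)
For $k=1$ there is nothing to show: by Remark~\ref{rmk:singlereaction} the only entry of $\SM[\child]$ is $s^+_{xr}-s^-_{xr}>0$. For $k\ge 2$, set $\mathbf{A}\coloneqq\SM[\child]$. By Cor.~\ref{cor:diagonal} and Lemma~\ref{lem:Blokhuis4}, $\mathbf{A}$ is an irreducible Metzler matrix, and it is semipositive. Prop.~\ref{prop:irMHu=sp} then gives a real Perron eigenvalue $\mu^*>0$ that dominates the real parts of all other eigenvalues. The plan is to study the characteristic polynomial $p(t)\coloneqq\det(t\mathbf{I}-\mathbf{A})$ on $[0,\infty)$ and show that it is strictly increasing there. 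This yields uniqueness of the positive eigenvalue and the sign of $p(0)=(-1)^k\det\mathbf{A}$ at once.

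\emph{Step 1 (the main point): no proper principal submatrix of $\mathbf{A}$ is Hurwitz-unstable.} I will use minimality of the core. Let $\mathbf{B}=\mathbf{A}[X'',X'']$ be a proper principal submatrix, i.e., the CS-matrix of the sub-CS on $X''\subsetneq X'$ with $R''=\kappa(X'')$. Suppose its spectral abscissa were positive. Decompose $\mathbf{B}$ into irreducible diagonal blocks (Frobenius normal form). The eigenvalues of $\mathbf{B}$ are those of these blocks, so some irreducible Metzler block $\mathbf{C}$, indexed by $X'''\subseteq X''$, is Hurwitz-unstable. It is therefore semipositive by Prop.~\ref{prop:irMHu=sp}, say $\mathbf{C}\mathbf{w}\gg0$ with $\mathbf{w}\gg0$. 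Now argue as in Lemma~\ref{lem:posentry}: delete from $\kappa(X''')$ every column of $\mathbf{C}$ that has no positive entry. Such columns contribute only $\le 0$ to every row, so the reduced matrix is still semipositive. Each remaining reaction $\kappa(y)$ has the reactant $y\in X'''$ and a product in $X'''$ (a positive net entry forces $s^+>0$). We obtain a well-formed, semipositive sub-RN strictly contained in $(X',R')$, contradicting minimality. Hence every proper principal submatrix $\mathbf{B}$ has all eigenvalues with real part $\le 0$.

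\emph{Step 2: monotonicity of $p$.} By Jacobi's formula, $p'(t)=\sum_{i}\det\big(t\mathbf{I}-\mathbf{A}_{(i)}\big)$, where $\mathbf{A}_{(i)}$ is $\mathbf{A}$ with row and column $i$ deleted. For $t>0$, the matrix $t\mathbf{I}-\mathbf{A}_{(i)}$ has nonpositive off-diagonal entries. By Step 1 all its eigenvalues have real part $\ge t>0$, so it is a nonsingular M-matrix and has positive determinant. (Alternatively: its nonreal eigenvalues pair up and its real eigenvalues are positive, so the product is positive.) Therefore $p'(t)>0$ for all $t>0$, and $p$ is strictly increasing on $[0,\infty)$.

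\emph{Step 3: conclusion.} Since $p$ is strictly increasing on $[0,\infty)$, it has at most one root there. Its root $\mu^*>0$ is therefore the unique real positive eigenvalue, and it is simple because $p'(\mu^*)>0$. Moreover $p(0)<p(\mu^*)=0$, so $(-1)^k\det\mathbf{A}<0$, i.e. $\sgn\det\SM[\child]=(-1)^{k-1}$. This is consistent with the invertibility established in Lemma~\ref{lem:square+kappa}.

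The delicate step is Step 1. One must make sure that passing to an irreducible block and discarding columns without positive entries really produces a well-formed sub-RN. With catalysis this needs care: the diagonal entries may be $0$, and a positive entry must come from $s^+>0$, not merely from a nonzero net stoichiometry. This works because the off-diagonal entries are exactly $s^+_{x\kappa(y)}\ge0$, and the reactant of $\kappa(y)$ is always $y$ itself.
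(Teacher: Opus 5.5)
Your proof is correct. Its essential input is the same as the paper's. Step~1 is the paper's minimality argument: a Hurwitz-unstable proper principal submatrix would contain an irreducible Metzler piece, which is semipositive by Prop.~\ref{prop:irMHu=sp} and therefore gives a strictly smaller autocatalytic sub-RN. The paper isolates that piece as a smallest principal submatrix with a wrong-sign minor, while you obtain it from the Frobenius normal form. Your column deletion is in fact vacuous: an irreducible Metzler block of size at least $2$ has a positive off-diagonal entry in every column, and an unstable $1\times1$ block is a positive entry. It does, however, make well-formedness explicit, where the paper only says ``in particular autocatalytic''.

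The genuine difference is in the final step. The paper expands $\det(\lambda\mathbf{I}-\mathbf{A})=\sum_i(-1)^ic_i\lambda^{k-i}$, with $c_i$ the sum of the $i\times i$ principal minors. It gets $(-1)^ic_i\ge0$ for all $i<k$ from stability of the proper principal submatrices. It then combines Descartes' rule of signs with the existence of the Perron root to place the single sign change at the constant term. You instead use Jacobi's formula, $p'(t)=\sum_i\det(t\mathbf{I}-\mathbf{A}_{(i)})>0$ for $t>0$, so that $p$ is strictly increasing on $[0,\infty)$.

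Your route needs stability only of the $(k-1)\times(k-1)$ principal submatrices and avoids Descartes' rule. It also packages uniqueness, simplicity of $\mu^*$ (via $p'(\mu^*)>0$), and the sign of $p(0)=(-1)^k\det\mathbf{A}$ into one monotonicity statement. The paper's route reads everything off the sign pattern of the principal-minor sums $c_i$, which is exactly the quantity the minimality argument controls.
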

\begin{proof}
  The statement follows from Descartes' rule of signs and the
  Perron-Frobenius theorem. Indeed, the latter implies that $\SM[\child]$
  has at least one real positive eigenvalue, thus Descartes' rule of signs
  yields at least one sign-change in the characteristic polynomial
  $g(\lambda)$ of $\SM[\child]$.  However, minimality prevents any
  sign-change in any coefficient except the last, the constant one, i.e.,
  the determinant. To see this we write the characteristic polynomial as
  follows:
  \begin{equation*}
    \begin{split}
      \det(\lambda \mathbf{I}-\mathbf{A}) =
      \sum_{i=0}^k (-1)^{i} \cdot c_i \cdot \lambda^{k-i}
    \end{split}
  \end{equation*}
  where the $i$-th coefficient, $c_i$, is the sum over all principal minors
  of size $i$. If each principal submatrix of $\SM[\child]$ of size $i$ is
  not Hurwitz-unstable, then $c_i$ has sign $\sgn c_i\in \{0,(-1)^i\}$,
  i.e., $(-1)^i\cdot c_i\geq 0$.  Thus, a sign-change in any coefficient
  other than the $k$-th one would imply the existence of $\tilde{k}\times
  \tilde{k}$ submatrix $\SM[{\tilde{\child}]}$ with a real-positive
  eigenvalue, and thus imply that the corresponding principal minor is
  Hurwitz-unstable.  Consider the (not necessarily unique) principal
  submatrix $\SM[{\tilde{\tilde{\child}}]}$ of $\SM[{\tilde{\child}]}$ such
  that $(-1)^{\tilde{\tilde{k}}-1}\det\SM[{\tilde{\tilde{\child}}]}\ge 0$
  and none of its $l\times l$ principal minors have sign
  $(-1)^{l-1}$. Irreducibility of $\SM[{\tilde{\tilde{\child}}}]$ follows,
  as otherwise there would be a strict principal submatrix with a
  real-positive eigenvalue, a fact that we have just excluded.  Thus,
  Prop.~\ref{prop:irMHu=sp} implies that $\SM[{\tilde{\tilde{\child}}}]$ is
  semipositive, and in particular autocatalytic, contradicting the
  assumption of $(X',R')$ being an autocatalytic core.
\end{proof}

\section{Autocatalytic CS-Cores with Catalysts} 

Some of the properties of CS-cores are already given by their definition as
child-selections: in particular, if $\child=(X',R',\kappa)$ is an
autocatalytic CS core, then $|X'|=|R'|$ and either $|X'|=|R'|=1$ or
$\SM[\child]$ has a non-positive diagonal.

Denote by $\Metzler{\SM}$ the matrix with entries
$\Metzler{\SM}_{xy}=\SM_{xy}$ if $\SM_{xy}\ge 0$ or $x=y$ and
$\Metzler{\SM}_{xy}=0$ otherwise. We call $\Metzler{\SM}$ the \emph{Metzler
part} of $\SM$. Note that $\SM$ is a Metzler matrix if and only if
$\SM=\Metzler{\SM}$.
  
\begin{lemma}
  \label{lem:CSirred}
  Let $\child=(X',R',\kappa)$ be an autocatalytic CS core. Then 
  the Metzler part $\Metzler{\SM[\child]}$ of its CS matrix $\SM[\child]$ 
  is irreducible.
\end{lemma}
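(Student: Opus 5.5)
We argue by contradiction, along the lines of the proof of Lemma~\ref{lem:Blokhuis4}. Assume $\Metzler{\SM[\child]}$ is reducible. After a simultaneous permutation of rows and columns (which keeps the CS order, since reordering $X'$ also reorders $R'=\kappa(X')$), we can write
\begin{equation*}
  \Metzler{\SM[\child]}=\begin{pmatrix} \mathbf{M}_{11} & \mathbf{0}\\ \mathbf{M}_{21} & \mathbf{M}_{22}\end{pmatrix},
\end{equation*}
where the index block $X''\subsetneq X'$ of $\mathbf{M}_{11}$ is nonempty. The upper right zero block of the Metzler part means that every off-diagonal entry of $\SM[\child]$ in rows $X''$ and columns $X'\setminus X''$ is non-positive. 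In other words, $\SM[\child]_{xy}\le 0$ for all $x\in X''$ and $y\notin X''$.

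Now restrict to the sub-CS $\tilde{\child}=(X'',\kappa(X''),\kappa|_{X''})$. Take $\mathbf{v}\gg0$ with $\SM[\child]\mathbf{v}\gg0$ and let $\tilde{\mathbf{v}}$ be its restriction to $X''$. For each $x\in X''$,
\begin{equation*}
  (\SM[\tilde{\child}]\tilde{\mathbf{v}})_x=(\SM[\child]\mathbf{v})_x-\sum_{y\notin X''}\SM[\child]_{xy}\mathbf{v}_y\ \ge\ (\SM[\child]\mathbf{v})_x>0,
\end{equation*}
because each term in the subtracted sum is non-positive. Hence $\SM[\tilde{\child}]=\SM[X'',\kappa(X'')]$ (up to column order) is semipositive. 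This is exactly where Metzler-part reducibility suffices: the dropped columns contribute only non-positive amounts to the rows in $X''$, so deleting them can only increase those rows.

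The remaining step is well-formedness of $(X'',\kappa(X''))$, and I expect it to be the main obstacle. Reactants are automatic: for each $r=\kappa(x)$ with $x\in X''$, the CS property gives $s^-_{xr}>0$ with $x\in X''$. For products, each column of $\SM[\tilde{\child}]$ must contain some $z\in X''$ with $s^+_{zr}>0$. Suppose instead that for some $r=\kappa(y)$, $y\in X''$, column $r$ has no positive entry in rows $X''$. Then deleting that column from $\SM[\tilde{\child}]$ only increases the remaining row values, so the smaller matrix stays semipositive. I would therefore iterate: repeatedly remove columns with no positive net-stoichiometric entry within the current row set, and remove their associated rows as well so that the child-selection structure is preserved. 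The concern is that removing row $y$ is harmless for semipositivity, since dropping a row preserves positivity of the remaining rows, but removing column $r$ must not break positivity elsewhere. Column $r$ has only non-positive entries in the remaining rows, so deleting it keeps those rows positive. The process terminates in a sub-CS in which every column has a positive entry, which gives a product $s^+_{zr}>0$ with $z$ in the set. This sub-CS cannot be empty: if every column of a semipositive matrix were non-positive, no row could be positive.

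The result is a nonempty proper autocatalytic sub-CS of $\child$, which contradicts the assumption that $\child$ is an autocatalytic CS-core. Hence $\Metzler{\SM[\child]}$ is irreducible. The $1\times1$ case is trivially irreducible and needs no argument.
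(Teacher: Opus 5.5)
Your proof is correct. Its skeleton matches the paper's: argue by contradiction, write $\SM[\child]$ in block form with an entrywise non-positive upper-right block, get semipositivity of the restricted sub-CS from $(\SM[\tilde{\child}]\tilde{\mathbf{v}})_x\ge(\SM[\child]\mathbf{v})_x>0$, then use well-formedness to contradict CS-minimality.

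The only real difference is how you guarantee that every column has a product inside the sub-CS.

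The paper chooses the reordering so that the top-left block $\tilde{\SM}$ already has irreducible Metzler part $\Metzler{\tilde{\SM}}$. This is possible by taking $X''$ to be a strongly connected class that is closed in the sense of the zero block. Irreducibility then hands every column of $\tilde{\SM}$ a positive off-diagonal entry in rows $X''$, so no iteration is needed.

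You instead allow an arbitrary closed index set $X''$. There a column $\kappa(y)$ may genuinely have all its positive entries in rows outside $X''$. You repair this by repeatedly deleting such columns together with their rows. As you correctly observe, each deletion preserves semipositivity. The process can never empty the set, because a semipositive $1\times 1$ matrix has a positive entry.

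Your route costs an extra pruning argument but needs no appeal to a normal form. It also handles uniformly the case where the closed irreducible block is $1\times 1$. In that case the paper's claim that each column has a positive off-diagonal entry is vacuous, and one must separately note that the single entry $\SM_{xr}>0$ supplies the product $s^+_{xr}>0$. Your criterion, which counts any positive entry including the diagonal, covers this automatically.
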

\begin{proof}
  We can argue very similarly to the proof of Lemma~\ref{lem:Blokhuis4}.
  Indirectly, suppose $\Metzler{\SM[\child]}$ is reducible. Then,
  after a suitable simultaneous reordering of rows and columns we have
  $\SM[\child]={\scriptsize\begin{pmatrix} \tilde{\SM} & \mathbf{N}
    \\ \mathbf{X} & \mathbf{Y}
  \end{pmatrix}}$, where $\Metzler{\tilde{\SM}}$ is an
  irreducible matrix and $\mathbf{N}$ contains only non-positive
  entries. The rows and columns of $\tilde{\SM}$ correspond to a
  subset $X''\subsetneq X'$ and $R''\coloneqq\tilde{\kappa}(X'')$ are the
  corresponding reactions with $\tilde{\kappa}$ being the restriction of
  the bijection $\kappa$ to $X''$, i.e., $(X'',R'',\tilde{\kappa})$ is a
  sub-CS of $\child$.  By assumption of autocatalyticity, $\SM[\child]$ is
  semipositive, i.e., there is a vector $\mathbf{v}\gg0$ such that
  $\SM\mathbf[\child]{v}\gg0$. Let us write
  $\mathbf{v}=(\tilde{\mathbf{v}},\mathbf{v'})$, where $\tilde{\mathbf{v}}$
  is indexed by $R''$ and $\mathbf{v'}$ is indexed by $R'\setminus
  R''$. Thus $0<(\SM\mathbf[\child]{v})_x =
  (\tilde{\SM}\,\,\mathbf{N})(\tilde{\mathbf{v}},\mathbf{v'})_x =
  (\tilde{\SM}\tilde{\mathbf{v}})_x + \mathbf{N}\mathbf{v'} \leq
  (\tilde{\SM}\tilde{\mathbf{v}})_x$, i.e.,
  $\tilde{\SM}\tilde{\mathbf{v}}\gg0$, and thus $\tilde{\SM}$
  is semipositive. Since $\Metzler{\tilde{\SM}}$ is by indirect
  assumption irreducible,
  each column $r$ of $\tilde{\SM}$ has a positive
  off-diagonal entry, say $\tilde{\SM}_{xr}>0$, with $x\neq
  \tilde{\kappa}^{-1}(r)$. In particular,
  $\tilde{\SM}_{xr}=\mathbf{S}[\child]_{xr}=s^+_{xr}-s^-_{xr}>0$ yields
  $s^+_{xr}>0$. On the other hand, for each column $r$, the diagonal entry
  corresponds to $\tilde{\SM}_{\tilde{\kappa}^{-1}(r)r}$, and thus
  by Def.~\ref{def:CS} of CS, $s^-_{\tilde{\kappa}^{-1}(r)r}>0$.  It
  follows that $(X'',R'',\kappa)$ is an autocatalytic sub-CS of
  $(X',R',\kappa)$ contradicting the assumption that $(X',R')$ is an
  autocatalytic CS-core. Hence $\Metzler{\SM[\child]}$ cannot be reducible.
\end{proof}
An autocatalytic sub-RN with irreducible matrix $\Metzler{\SM}$
trivially satisfies entity autonomy, i.e., every $x\in X$ is produced and
consumed by some reaction. 

Let us now focus on the connection between autocatalytic cores and
autocatalytic CS-cores.  This topic was partially explored in
\cite{Vassena:24a}, where Thm.7.1 shows -- for networks without explicit
autocatalysis -- that autocatalytic cores are characterized as ``unstable
CS-core'' such that $\SM[\child]$ is a Metzler matrix. We clarify here that
the case with explicit catalysis contains a further subtlety, which is
not sufficiently explored in \cite{Vassena:24a}. To connect with the
discussion there, for self-consistency, we first consider RNs without
catalysts:

\begin{lemma}\label{lem:Cscoresmetzler}
  Let $(X,R)$ be an RN without catalysts and let $\child=(X',R',\kappa)$
  be an autocatalytic CS-core such that $\SM[\child]$ is a Metzler
  matrix. Then $(X',R')$ is an autocatalytic core.
\end{lemma}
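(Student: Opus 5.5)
The plan is to show two things. First, $(X',R')$ is itself an autocatalytic sub-RN. Second, any strictly smaller autocatalytic sub-RN would produce a proper autocatalytic sub-CS of $\child$, which the CS-core property forbids. The key tool is that, in the absence of catalysts, the Metzler property pins down the position of the negative entries of $\mathbf{S}'=\mathbf{S}[X',R']$ completely.

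\emph{Step 1: locating the negative entries.} For each $x\in X'$ we have $s^-_{x\kappa(x)}>0$ by Def.~\ref{def:CS}. Since $x$ is not a catalyst of $\kappa(x)$, $s^+_{x\kappa(x)}=0$, so $\mathbf{S}_{x\kappa(x)}<0$. Since $\SM[\child]$ is Metzler, every entry $\mathbf{S}_{x\kappa(y)}$ with $y\neq x$ is $\ge 0$. Hence each row $x$ of $\mathbf{S}'$ has exactly one negative entry, in column $\kappa(x)$. Moreover, for any $r\in R'$ with $s^-_{xr}>0$ we get $\mathbf{S}_{xr}<0$ (again no catalysis), so $r=\kappa(x)$. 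In words: inside $(X',R')$, the only reaction consuming $x$ is $\kappa(x)$.

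\emph{Step 2: $(X',R')$ is autocatalytic.} Semipositivity of $\mathbf{S}'$ holds by assumption, since $\child$ is an autocatalytic CS. Well-formedness is in fact already part of the definition of an autocatalytic CS. It can also be checked directly: every $r=\kappa(x)$ has the reactant $x\in X'$. By Lemma~\ref{lem:CSirred}, $\Metzler{\SM[\child]}=\SM[\child]$ is irreducible. If $|X'|\ge 2$, every column therefore contains a positive off-diagonal entry, which gives a product in $X'$. If $|X'|=1$, no autocatalytic pair exists without catalysts by Remark~\ref{rmk:singlereaction}, so this case cannot occur.

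\emph{Step 3: minimality.} Suppose $(X'',R'')\subsetneq(X',R')$ is autocatalytic. By finiteness, we may pass to an autocatalytic core contained in it and still call it $(X'',R'')$. By Lemma~\ref{lem:square+kappa} and Cor.~\ref{cor:acore-uniqueCS}, $|X''|=|R''|$ and there is a CS $\pmb{\lambda}=(X'',R'',\lambda)$ with $s^-_{x\lambda(x)}>0$. By Step 1, $\lambda(x)=\kappa(x)$ for all $x\in X''$, so $R''=\kappa(X'')$. Thus $\pmb{\lambda}$ is a sub-CS of $\child$, and it is autocatalytic because $(X'',R'')$ is. It is also proper: if $X''=X'$, then $R''=\kappa(X')=R'$, contradicting strict containment. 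This contradicts $\child$ being an autocatalytic CS-core.

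The main subtlety I expect is exactly Step 1, namely the identification $\lambda=\kappa|_{X''}$. This is where the absence of catalysts is indispensable. With catalysts, a reaction could consume $x$ while having $\mathbf{S}_{xr}\ge 0$, and this is precisely what happens in example~\eqref{ex:cat2}. The remaining steps are routine bookkeeping with Lemmas~\ref{lem:square+kappa} and~\ref{lem:CSirred}.
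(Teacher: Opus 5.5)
Your proof is correct and follows essentially the paper's argument: the absence of catalysts together with the Metzler property forces the reactant relation inside $(X',R')$ to coincide with $\kappa$. Hence any strictly smaller autocatalytic (square) sub-RN has the form $(X'',\kappa(X''))$ and is therefore a proper autocatalytic sub-CS, contradicting CS-minimality. The paper phrases this column-wise (each $r\in R'$ has the unique reactant $\kappa^{-1}(r)$, so well-formedness forces $R''=\kappa(X'')$) and leaves the reduction to square cores implicit. You argue row-wise via Lemma~\ref{lem:square+kappa} and make the passage to a core and the properness of the sub-CS explicit.
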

\begin{proof}
  In the absence of catalysts, reactants appear as negative entries in
  $\SM[\child]$ and hence every reaction has a unique reactant
  $\kappa^{-1}(r)$. Thus the only potentially well-formed subsystems
  $(X'',R'')$ of $(X',R')$ with $|X''|=|R''|$ must satisfy
  $R''=\kappa(X'')$ since otherwise some reactions have no reactant. In
  particular, therefore, every autocatalytic sub-RN $(X'',R'')$ corresponds
  to an autocatalytic sub-CS $(X'',R'',\tilde{\kappa})$, which is excluded
  by assumption.
\end{proof}

Conversely, all autocatalytic cores have a unique CS
$\child=(X',R',\kappa)$ and the corresponding matrix $\SM[\child]$ is a
Metzler matrix. Therefore, we conclude
\begin{proposition}
  \label{prop:7.1}
  Let $(X,R)$ be an RN without catalysts and let $\child=(X',R',\kappa)$ be
  an autocatalytic CS-core. Then $(X',R')$ is an autocatalytic core if and
  only if $\SM[\child]$ is a Metzler matrix.
\end{proposition}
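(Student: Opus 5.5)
The proposition combines two directions, each essentially available from earlier results, so the plan is to assemble them carefully.

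\emph{Sufficiency} (if $\SM[\child]$ is Metzler, then $(X',R')$ is an autocatalytic core). This is exactly Lemma~\ref{lem:Cscoresmetzler}, and I would invoke it directly. For self-containedness I would recall its key point. Without catalysts, $s^-_{xr}>0$ forces $\mathbf{S}_{xr}<0$. The Metzler property of $\SM[\child]$ then means that, within $X'$, each reaction $r\in R'$ has the unique reactant $\kappa^{-1}(r)$. I would also tighten one point of that argument. An autocatalytic sub-RN $(X'',R'')$ of $(X',R')$ need not a priori be square. However, any autocatalytic sub-RN contains an autocatalytic core, and by Lemma~\ref{lem:square+kappa} that core is square with a bijection $x\mapsto r_x$ given by reactant pairs. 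Since reactions in $R'$ have unique reactants $\kappa^{-1}(r)$ in $X'$, this bijection must coincide with the restriction of $\kappa$. Hence the core is an autocatalytic sub-CS of $\child$. By the CS-core assumption, it must equal $\child$ itself, so $(X',R')$ has no strictly smaller autocatalytic sub-RN.

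\emph{Necessity} (if $(X',R')$ is an autocatalytic core, then $\SM[\child]$ is Metzler). First I would use Corollary~\ref{cor:acore-uniqueCS}: the core carries a unique child-selection, and it is given by the $\kappa$ of Lemma~\ref{lem:square+kappa}. Since $\child$ is a CS on $(X',R')$, it must be this one. I expect the main subtlety of the whole proof to sit here: one must ensure that the given $\child$ really is the canonical CS, rather than some other CS on the same sets. Uniqueness handles this cleanly, because any CS pairs $x$ with a reaction in which $x$ is a reactant, and Lemma~\ref{lem:square+kappa} says that reaction is unique.

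The off-diagonal part then follows from the uniqueness of reactants. For $y\ne x$ we have $s^-_{x\kappa(y)}=0$, hence $\SM[\child]_{xy}=s^+_{x\kappa(y)}\ge0$, as in Corollary~\ref{cor:diagonal}. Metzler only constrains off-diagonal entries, so the diagonal, including the case $|X'|=1$, needs no further argument.
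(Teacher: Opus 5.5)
Your proof is correct and follows the paper's route. Sufficiency is Lemma~\ref{lem:Cscoresmetzler}, and necessity follows from the uniqueness of the CS on a core (Cor.~\ref{cor:acore-uniqueCS}) together with the Metzler property noted before Cor.~\ref{cor:diagonal}. Your extra step, passing from an arbitrary strictly smaller autocatalytic sub-RN to a square core via Lemma~\ref{lem:square+kappa}, is a valid tightening of the paper's terse argument, which explicitly treats only sub-RNs with $|X''|=|R''|$.
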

Prop.~\ref{prop:7.1} may be seen as a more explicit version of
\cite[Thm.~7.1]{Vassena:24a} w.r.t. the relationship of autocatalytic
cores and autocatalytic CS-cores, emphasizing the key role of Metzler
matrices in this context.

In case $(X,R)$ contains catalysts, we cannot hope to characterize
autocatalytic cores among autocatalytic CS-core completely in terms of
$\mathbf{S}$. To see this, consider again Example \eqref{ex:cat2}:

\begin{equation*}
  \begin{split}
      x_1 + x_2 &\underset{r_1}{\longrightarrow} 2x_2  \\
      a + x_2 &\underset{r_2}{\longrightarrow} 2x_1 \\
  \end{split}
\end{equation*}
with the CS $\child=(\{x_1,x_2\},\{r_1,r_2\},\kappa)$, $\kappa(x_1)=r_1$ and
$\kappa(x_2)=r_2$ and associated autocatalytic CS-core
\begin{equation}\label{eq:CScoreNoCore}
  \SM[\child] = \begin{pmatrix}
    -1 &  2 \\
     1 & -1 \\
    \end{pmatrix},
\end{equation}
which, however, and as already commented in Sec.~\ref{sec:autocat}, is
\emph{not} an autocatalytic core due to reaction $r_1$, which is explicitly
autocatalytic in $x_2$ and thus
$\mathbf{S}[\{x_2\},\{r_1\}]=\begin{pmatrix}1\end{pmatrix}$ is the only
autocatalytic core in this example. We note that there are also similar 
examples without autocatalytic reactions, e.g., $a+x_1+x_2 \longrightarrow 
2x_3+x_2$, $x_2 \longrightarrow x_1$, $x_1+x_3  \longrightarrow x_1+x_2$.

Lemma~\ref{lem:square+kappa}, however, provides a suitable alternative
condition that generalizes Prop.~\ref{prop:7.1} and is also
straightforward to verify computationally:
\begin{theorem}
  \label{thm:main}
  Let $(X,R)$ be an RN and let $\child=(X',R',\kappa)$ be an autocatalytic
  CS-core. Then $(X',R')$ is an autocatalytic core if and only if for
  every reaction $r\in R'$, the entity $\kappa^{-1}(r)$ is the only
  reactant of $r$ in $X'$.
\end{theorem}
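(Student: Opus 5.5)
The forward direction is immediate from earlier results. If $(X',R')$ is an autocatalytic core, Lemma~\ref{lem:square+kappa} and Corollary~\ref{cor:acore-uniqueCS} say there is a unique child-selection on $(X',R')$. For every $x\in X'$ it is defined by the unique reaction $r_x$ in which $x$ is a reactant, and $x$ is the only reactant of $r_x$ in $X'$. Hence the given $\kappa$ coincides with this map, and $\kappa^{-1}(r)$ is the only reactant of $r$ in $X'$ for every $r\in R'$.

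For the converse, assume that each $r\in R'$ has $\kappa^{-1}(r)$ as its only reactant in $X'$, and suppose indirectly that $(X',R')$ is not an autocatalytic core. Since $(X',R')$ is autocatalytic, it contains a strictly smaller autocatalytic sub-RN. Among all such sub-RNs choose an inclusion-minimal one, $(X'',R'')$; it is itself an autocatalytic core. By Lemma~\ref{lem:square+kappa} we have $|X''|=|R''|$, and every $x\in X''$ is a reactant of some reaction in $R''$, namely a unique $\lambda(x)\in R''$. Well-formedness of $(X'',R'')$ gives every $r\in R''$ a reactant in $X''\subseteq X'$. By hypothesis the only reactant of $r$ in $X'$ is $\kappa^{-1}(r)$, so $\kappa^{-1}(r)\in X''$ for all $r\in R''$, that is, $\kappa^{-1}(R'')\subseteq X''$. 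Conversely, $\lambda(x)\in R''$ has $x$ as a reactant, so $x=\kappa^{-1}(\lambda(x))$, i.e.\ $\lambda=\kappa|_{X''}$. Therefore $R''=\kappa(X'')$, and $(X'',R'',\kappa|_{X''})$ is a sub-CS of $\child$. It is autocatalytic and proper, because $(X'',R'')\neq(X',R')$ and $R''=\kappa(X'')$ force $X''\subsetneq X'$. This contradicts the assumption that $\child$ is an autocatalytic CS-core.

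The one step needing care is the identification $\lambda=\kappa|_{X''}$, equivalently $R''=\kappa(X'')$. It works because every reaction of $R'$ has exactly one reactant in $X'$, so any choice of a reactant within a subsystem is forced to agree with $\kappa$. The single-entity case $|X''|=1$ needs no separate treatment: there $X''=\{x\}$, $R''=\{r\}$ with $x$ a reactant of $r$, so $x=\kappa^{-1}(r)$ and the same conclusion holds.
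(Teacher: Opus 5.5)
Your proof is correct and follows the same route as the paper. The forward direction comes from Lemma~\ref{lem:square+kappa}. For the converse, you show that any autocatalytic core $(X'',R'')$ strictly inside $(X',R')$ must satisfy $R''=\kappa(X'')$, so it is a proper autocatalytic sub-CS of $\child$, contradicting CS-minimality. Your version simply spells out the steps the paper's terse proof leaves implicit: passing to an inclusion-minimal sub-RN, the two inclusions that give $R''=\kappa(X'')$, and properness via $X''\subsetneq X'$.
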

\begin{proof}
  Lemma~\ref{lem:square+kappa} establishes a necessary condition for
  $(X',R')$ to be an autocatalytic core: the existence, for each reaction
  $r\in R'$, of a unique reactant $x(r)=\kappa^{-1}(r)\in X'$, and thereby
  a unique child-selection determined by the bijection
  $\kappa$. Conversely, if any $r\in R'$ has a unique reactant
  $x(r)=\kappa^{-1}(r)$ in $X'$, then any subset of $(X',R')$ candidate to
  be an autocatalytic core is of the form $(X'',\kappa(X''))$, and thus any
  CS-triple $(X'',R'',\tilde{\kappa})$ is a restriction of
  $\child$. CS-minimality with respect to autocatalysis thus excludes any
  autocatalytic core.
\end{proof}
We note that Prop.~\ref{prop:7.1} follows immediately from
Theorem~\ref{thm:main} if there are no catalysts. 

The subset relationships of autocatalytic cores and autocatalytic CS cores
among the irreducible Metzler CS matrices and CS matrices with irreducible
Metzler part are summarized in Fig.~\ref{fig:CoreRelationships}.

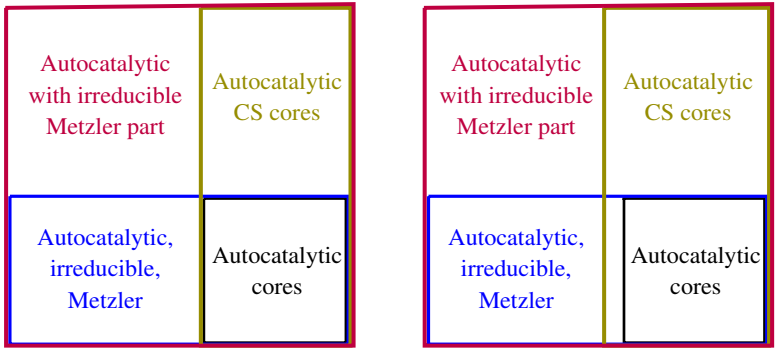
\begin{figure}
  \centering
  \begin{minipage}[c]{0.5\textwidth}
    \centering
    \begin{tikzpicture}
      \node[align=center] (a) at (0,0) {{\small Autocatalytic} \\ {\small cores}};
      \node[label={[xshift=-0.25cm, yshift=-0.75cm, align=center, color=blue] {\small Autocatalytic,} \\ {\small irreducible,}\\ {\small Metzler}}] (a) at (-2,0) {};
      \node[label={[yshift=-0.25cm, align=center, color=olive]{\small Autocatalytic} \\ {\small CS cores}}] (a) at (0,2) {};
      \node[label={[xshift=-0.25cm, yshift=-0.5cm, align=center, color=purple] {\small Autocatalytic} \\  {\small with irreducible}\\ {\small Metzler part}} ] (a) at (-2,2) {};
      \draw[-, black, line width=1.0pt] (-0.96,0.96) to (-0.96,-0.96) to (0.9,-0.96) to (0.9,0.96) to (-0.96,0.96);
      \draw[-, blue, line width=1.2pt] (-3.5,0.99) to (-3.5,-0.98) to (0.93,-0.98) to (0.93,0.99) to (-3.5,0.99);
      \draw[-, olive, line width=1.5pt] (-1,3.5) to (-1,-1) to (0.96,-1) to (0.96,3.5) to (-1,3.5);
      \draw[-, purple, line width=1.5pt] (-3.55,3.5) to (-3.55,-1) to (1,-1) to (1,3.55) to (-3.56,3.5);
    \end{tikzpicture}
  \end{minipage}%
  \hfill%
  \begin{minipage}{0.5\textwidth}
    \begin{tikzpicture}
      \node[align=center] (a) at (0,0) {{\small Autocatalytic} \\ {\small cores}};
      \node[label={[xshift=-0.25cm, yshift=-0.75cm, align=center, color=blue] {\small Autocatalytic,} \\ {\small irreducible,}\\ {\small Metzler}}] (a) at (-2.1,0) {};
      \node[label={[yshift=-0.25cm, align=center, color=olive]{\small Autocatalytic} \\ {\small CS cores}}] (a) at (-0.1,2) {};
      \node[label={[xshift=-0.25cm, yshift=-0.5cm, align=center, color=purple] {\small Autocatalytic} \\  {\small with irreducible}\\ {\small Metzler part}} ] (a) at (-2.1,2) {};
      \draw[-, black, line width=1.0pt] (-0.94,0.96) to (-0.94,-0.96) to (0.9,-0.96) to (0.9,0.96) to (-0.96,0.96);
      \draw[-, blue, line width=1.2pt] (-3.5,0.99) to (-3.5,-0.98) to (0.93,-0.98) to (0.93,0.99) to (-3.5,0.99);
      \draw[-, olive, line width=1.5pt] (-1.2,3.5) to (-1.2,-1) to (0.96,-1) to (0.96,3.5) to (-1.2,3.5);
      \draw[-, purple, line width=1.5pt] (-3.55,3.5) to (-3.55,-1) to (1,-1) to (1,3.55) to (-3.56,3.5);
    \end{tikzpicture}
  \end{minipage}
  \caption{Overview for the set relationship of autocatalytic CS matrices
    in RNs without \textbf{(left)} and with catalysis \textbf{(right)}.  In
    particular, autocatalytic cores are intended in their Metzler CS-
    representation via Cor.~\ref{cor:diagonal}.  The main difference is the
    presence, for explicit catalysis, of autocatalytic CS cores that are
    both irreducible and Metzler, but not autocatalytic cores
    (Ex.~\eqref{ex:cat2b}). Without explicit catalysis, in turn, being
    Metzler identifies autocatalytic cores in the set of autocatalytic CS
    cores (Lemma \ref{lem:Cscoresmetzler}).}
  \label{fig:CoreRelationships}
\end{figure}
  
\section{Strong Blocks, Fluffle Graphs and Algorithmic Considerations}
\label{sec:fluffles}

This section briefly summarizes the algorithmic approach for the detection
of autocatalytic CS-cores from \cite{Golnik:25z,Golnik:26q} and extends its
formal generalities to the case with catalysis. A directed graph $G$ is
called a \emph{strong block} if it is strongly connected and its underlying
undirected graph is 2-connected. In the absence of catalysts,
$\Metzler{\mathbf{S}[\child]}$ is irreducible if and only if the bipartite
subgraph $\king(\child)$ introduced in \eqref{eq:childbipartite} is a
strong block \cite[Thm.~29]{Golnik:25z}. In the presence of catalysis,
however, only one direction holds: the next Lemma shows that having an
irreducible matrix $\Metzler{\mathbf{S}[\child]}$ is a sufficient condition
for $\king(\child)$ to be a strong block. However, the continuation of the
section will clarify that irreducibility of $\Metzler{\mathbf{S}[\child]}$
is not necessary.

\begin{lemma}
  \label{lem:strongblock}
  Let $\child=(X',R',\kappa)$ be a CS whose associated CS matrix
  $\mathbf{S}[\child]$ has irreducible Metzler part
  $\Metzler{\mathbf{S}[\child]}$. Then the subgraph $\king(\child)$ is a
  strong block.
\end{lemma}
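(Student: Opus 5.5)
The plan is to show separately that $\king(\child)$ is strongly connected and that its underlying undirected graph is 2-connected, using only the edges guaranteed by the irreducible Metzler part. First we translate the matrix into edges of $\king(\child)$. The MR-edges of $\king(\child)$ are exactly the matching edges $(x,\kappa(x))$. If $x\neq y$ and $\Metzler{\SM[\child]}_{xy}>0$, then $\mathbf{S}_{x\kappa(y)}>0$, hence $s^+_{x\kappa(y)}>0$, and therefore $(\kappa(y),x)$ is an RM-edge of $\king(\child)$ by \eqref{eq:childbipartite}. Every off-diagonal positive entry $(x,y)$ of $\Metzler{\SM[\child]}$ thus yields the directed path $y\to\kappa(y)\to x$ in $\king(\child)$.

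Let $D$ be the digraph on $X'$ with an arc $y\to x$ whenever $x\ne y$ and $\Metzler{\SM[\child]}_{xy}>0$. Irreducibility of $\Metzler{\SM[\child]}$ means $D$ is strongly connected; the diagonal is irrelevant here. The paths $y\to\kappa(y)\to x$ lift every directed path of $D$ to $\king(\child)$, so any two entity vertices are mutually reachable. Each reaction vertex $r=\kappa(y)$ is entered from $y$ via the matching edge. It also has an out-edge to some entity, because $y$ has an out-arc in $D$ when $|X'|\ge 2$. Hence $\king(\child)$ is strongly connected. The degenerate case $|X'|=1$ is handled separately. There $\king(\child)$ is a single entity and a single reaction joined by the MR-edge and, if $s^+_{x\kappa(x)}>0$, also by an RM-edge forming a digon. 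We treat this as a strong block by convention for $K_2$, or note that irreducibility of a $1\times 1$ matrix is then understood accordingly.

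For 2-connectivity we intend to use the standard fact that a strongly connected digraph whose underlying graph has no cut vertex is a strong block, so it suffices to rule out cut vertices. An alternative route is to show that every pair of edges lies on a common cycle, via ear decompositions of strongly connected digraphs. I expect this step to be the main obstacle. Strong connectivity alone does not prevent cut vertices, since two directed cycles sharing one vertex form a strongly connected graph with a cut vertex. Moreover, a reaction vertex $\kappa(y)$ with several out-neighbours could a priori act as a cut vertex. For an entity vertex $y$ as a candidate cut vertex, the removal deletes only $y$ and its incident edges. The matched reaction $\kappa(y)$ stays attached to the rest through its RM-edges, so we would argue that the remaining part stays connected through the lifted arcs of $D-y$ together with the edges at $\kappa(y)$.

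At exactly this point we would scrutinise the definition of strong block used in \cite{Golnik:25z}. If, as in that paper, "strong block" means a maximal strongly connected 2-connected piece, or if the cited proof of \cite[Thm.~29]{Golnik:25z} reduces 2-connectivity to irreducibility via the matching structure, we would adapt that proof. The perfect matching pairs each $y$ with $\kappa(y)$, and we would contract these matching edges to obtain $D$ with extra arcs. A cut vertex in $\king(\child)$ would then give a cut in this contracted digraph compatible with a block-triangular form of $\Metzler{\SM[\child]}$, contradicting irreducibility. If this contraction argument fails for digons created by catalysis, we would fall back on the cycle-through-two-edges characterisation.
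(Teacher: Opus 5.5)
Your strong-connectivity argument is correct and matches the paper's. Each positive off-diagonal entry $\Metzler{\SM[\child]}_{xy}$ lifts to the path $y\to\kappa(y)\to x$ in $\king(\child)$, and a nonzero off-diagonal entry in each column gives every reaction vertex an out-edge. The genuine gap is the 2-connectivity step, where you only list candidate strategies. The missing idea is a degree count:
\begin{itemize}
\item Let $v$ be a cut vertex of the underlying undirected graph of a strongly connected digraph, and let $A$, $B$ be two components after deleting $v$. Every directed path from $A$ to $B$ or from $B$ to $A$ passes through $v$. So $v$ has an in-edge from, and an out-edge to, each of $A$ and $B$, and its in-degree and out-degree are both at least $2$.
\item In $\king(\child)$ the MR-edges form the perfect matching $E_1(\child)$. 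So every reaction vertex $r$ has in-degree exactly $1$ (the edge $(\kappa^{-1}(r),r)$), and every entity $x$ has out-degree exactly $1$ (the edge $(x,\kappa(x))$).
\end{itemize}
Hence no cut vertex exists. This is precisely the paper's argument, and catalysis plays no role in it. A digon $(x,\kappa(x),x)$ only adds an RM-edge and changes neither degree. So both of your worries dissolve: digons cannot break the argument, and a reaction vertex with many out-neighbours cannot be a cut vertex.

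The routes you sketch do not close the gap without this observation. Your contraction argument fails as stated. Contracting the matching edges yields the digraph $D$ of $\Metzler{\SM[\child]}$, which is strongly connected but may itself have a cut vertex. Take arcs $y\to a\to y$ and $y\to b\to y$, realized e.g.\ by the reactions $y\to a+b$, $a\to y$, $b\to y$. So a cut vertex in $D$ is compatible with irreducibility.

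The contraction argument can be rescued only by the same degree fact. For a cut entity $y$, let $B$ be a component of $\king(\child)-y$ not containing $\kappa(y)$. Then $B$ receives no edge from outside, because the unique out-edge of $y$ goes to $\kappa(y)$. A reaction vertex in $B$ drags its matched entity into $B$, so $B\cap X'$ is a nonempty proper subset of $X'$ with no incoming arcs in $D$. This contradicts irreducibility; the case of a reaction vertex is dual, using in-degree $1$.

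Likewise, ``connected through the lifted arcs of $D-y$'' is not an argument, since $D-y$ is disconnected in the same example. What is true is that a shortest directed path from $\kappa(y)$ to any vertex $u\neq y$ avoids $y$, again because $\kappa(y)$ is the only out-neighbour of $y$.

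As a side remark, take $|X'|=1$ with $s^+_{x\kappa(x)}=0$. Then $\king(\child)$ is a single directed edge and is not strongly connected under any convention for $K_2$. The statement, like the paper's proof (which uses a positive off-diagonal entry), tacitly needs $|X'|\ge 2$ or a digon there.
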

\begin{proof}
  From the irreducibility of $\Metzler{\mathbf{S}[\child]}$, we may argue
  as in the proofs of L.27 and L.28 in the Supplement of
  \cite{Golnik:25z}. First observe that for every pair $x,y\in X'$ there is
  a sequence of entity vertices $z_i\in X'$ with $x=z_0$ and $y=z_k$ such
  that $\Metzler{\mathbf{S}[\child]}_{z_{i}z_{i-1}}>0$ where
  $r_{i-1}=\kappa(z_{i-1})$ and $\mathbf{S}_{z_{i}r_{i-1}}>0$. Thus
  $(z_{i-1},\kappa(z_{-1}))\in E_1(\child)$ and $(\kappa(z_{i-1}),z_i)\in
  E_2(\child)$.  Thus
  $(x=z_0,\kappa(z_0),z_1,\kappa(z_1),\dots,\kappa(z_{k-1}),y=z_k)$ is a
  walk in $\king(\child)$.  Moreover, for every reaction vertex $r$ there
  is an edge $(x,r)$ with $x=\kappa^{-1}(r)$ and an edge $(r,y)$ since
  there is $y\in X'$ with $\SM[\child]_{\kappa^{-1}(r)y}>0$. Hence all
  vertices of $\king(\child)$ are reachable from each other and thus
  $\king(\child)$ is strongly connected. Now suppose $\king(\child)$ is
  strongly connected but not a strong block, then there is a cut vertex
  $v\in V(\king(\child))$. Then $v$ has at least two in-edges and two
  out-edges: If $v$ is reaction vertex, $v=r$, its in-edges in
  $\king(\child)$ are of the form $(\kappa^{-1}(r),r)$.  Since $\kappa$ is
  a bijection, there is at most one such edge, and thus we reach a
  contradiction. If $v$ is an entity vertex, i.e. $v=x$, then all its
  out-edges are of the form $(x,\kappa(x))$, i.e., again there is exactly
  one such edge, and thus we reach analogously a contradiction. Thus
  $\king(\child)$ cannot contain a cut vertex, and hence must be a strong
  block.
\end{proof}

Combining Lemmas~\ref{lem:CSirred} and \ref{lem:strongblock} immediately
yields;
\begin{corollary}
  Let $\child=(X',R',\kappa)$ be an autocatalytic CS core. Then the
  subgraph $\king(\child)$ is a strong block.
\end{corollary}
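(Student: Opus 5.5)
The statement follows by chaining two earlier results, so the plan is a two-step composition. Let $\child=(X',R',\kappa)$ be an autocatalytic CS core.

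First, I apply Lemma~\ref{lem:CSirred}. It gives directly that the Metzler part $\Metzler{\SM[\child]}$ of the CS matrix $\SM[\child]$ is irreducible. I would check that the lemma needs nothing beyond the defining properties of an autocatalytic CS core: semipositivity of $\SM[\child]$, and the absence of a proper autocatalytic sub-CS. No catalysis-free assumption enters, so the lemma applies verbatim in the present general setting.

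Second, I feed this conclusion into Lemma~\ref{lem:strongblock}. Its only hypothesis is that $\child$ is a CS whose CS matrix has irreducible Metzler part, which is exactly what the first step delivers. The lemma then yields that the bipartite subgraph $\king(\child)$ defined in \eqref{eq:childbipartite} is a strong block, which is the claim.

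The one point deserving care is the degenerate case $|X'|=|R'|=1$, where $\child=(\{x\},\{r\},\kappa)$ and $s^+_{xr}>s^-_{xr}>0$. Here the $1\times 1$ matrix is trivially irreducible. The graph $\king(\child)$ consists of $x$, $r$, the MR-edge $(x,r)$ and the RM-edge $(r,x)$, so it is a digon. A digon is strongly connected, and its underlying graph (a single edge, or a 2-cycle as a multigraph) has no cut vertex. I would note that this fits the strong-block convention used in the paper, and that Lemma~\ref{lem:strongblock} already covers it. Apart from this convention check there is no real obstacle.
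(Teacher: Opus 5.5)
Your proposal is correct and is exactly the paper's argument: the corollary follows by composing Lemma~\ref{lem:CSirred} with Lemma~\ref{lem:strongblock}. Your separate treatment of the case $|X'|=|R'|=1$ is worth keeping, but it is your direct check that settles this case, not Lemma~\ref{lem:strongblock}: there the out-edge $(r,x)$ comes from $s^+_{xr}>s^-_{xr}>0$, whereas the lemma's proof obtains it from a positive off-diagonal entry, which a $1\times 1$ matrix does not have.
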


The subgraphs $\king(\child)$ corresponding to irreducible autocatalytic
child-selections $\child$ in RNs without catalysts served as a motivation
for introducing the following class of directed bipartite graphs in
\cite{Golnik:25z}:
\begin{definition}[Fluffle]  
  A directed bipartite graph $\Gamma=(V,E)$ with $V=X\cup R$ is a \emph{fluffle} if
  \begin{enumerate}
  \item $|X|=|R|$,
  \item $\Gamma$ is a strong block,
  \item every $x\in X$ has out-degree $1$ and every $r\in R$ has indegree
    $1$.
  \end{enumerate}  
\end{definition}

This close connection between irreducible child-selections and fluffles
persists in RNs with catalysts. More precisely, we have:
\begin{lemma}
  \label{lem:fufflesufficient} 
  Let $\child$ be a child-selection such that $\Metzler{\mathbf{S}[\child]}$
  is an irreducible matrix. Then $\king(\child)$ is a fluffle.
\end{lemma}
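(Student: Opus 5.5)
We verify the three defining properties of a fluffle for $\Gamma=\king(\child)$, with vertex set $X'\cupdot R'$ and edge set $E_1(\child)\cupdot E_2(\child)$ as in \eqref{eq:childbipartite}.

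\emph{Property 1, $|X'|=|R'|$.} This is immediate from Def.~\ref{def:CS}, since $\kappa:X'\to R'$ is a bijection.

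\emph{Property 3, degree conditions.} By construction, every MR-edge of $\king(\child)$ lies in $E_1(\child)$, so it has the form $(x,\kappa(x))$. Hence each entity $x\in X'$ has exactly one out-edge, namely $(x,\kappa(x))$. This edge exists because $s^-_{x\kappa(x)}>0$. Likewise, each reaction $r\in R'$ has exactly one in-edge, namely $(\kappa^{-1}(r),r)$, because $\kappa$ is a bijection. Entities have no further out-edges and reactions no further in-edges, since every edge starting at an entity lies in $E_1(\child)$ and every edge ending at a reaction does too. Thus out-degree of entities and in-degree of reactions are exactly $1$.

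\emph{Property 2, strong block.} This is exactly the conclusion of Lemma~\ref{lem:strongblock}, which applies verbatim under the present hypothesis that $\Metzler{\mathbf{S}[\child]}$ is irreducible. So no new argument is needed.

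The only point deserving care is a degenerate case of the strong block property, $|X'|=1$. Then $\Metzler{\mathbf{S}[\child]}$ is a $1\times1$ matrix, which is irreducible by convention. The graph $\king(\child)$ then consists of $x$, $r=\kappa(x)$, the edge $(x,r)$, and possibly $(r,x)$ if $s^+_{xr}>0$.

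I expect this case to be the main (minor) obstacle. I would handle it by one of two routes, whichever matches the paper's conventions in Lemma~\ref{lem:strongblock}:
\begin{itemize}
\item either noting that the intended setting is an autocatalytic one, where $s^+_{xr}>s^-_{xr}>0$ yields the digon $x\to r\to x$, which is a strong block under the usual convention for two-vertex graphs;
\item or simply deferring to Lemma~\ref{lem:strongblock}, whose proof already covers this case.
\end{itemize}
Beyond this case, the proof amounts to collecting Lemma~\ref{lem:strongblock} together with the two elementary counting observations above.
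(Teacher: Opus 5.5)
Your proof is the paper's proof. You get $|X'|=|R'|$ from the bijection $\kappa$, the degree conditions from the fact that every MR-edge of $\king(\child)$ has the form $(x,\kappa(x))$, and the strong-block property directly from Lemma~\ref{lem:strongblock}.

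Your side remark on $|X'|=1$ is inaccurate, and neither route closes that case. The proof of Lemma~\ref{lem:strongblock} gets an out-edge of each reaction from a positive off-diagonal entry, and irreducibility only guarantees such an entry when $|X'|\ge 2$. In fact the claim fails for $X'=\{x\}$ with $s^+_{x\kappa(x)}=0$: then $\Metzler{\SM[\child]}=(-s^-_{x\kappa(x)})$ is a nonzero $1\times 1$ matrix, hence irreducible, while $\king(\child)$ is the single arc $x\to\kappa(x)$ and is not strongly connected. So the statement, like Lemma~\ref{lem:strongblock}, tacitly needs $|X'|\ge 2$ or $s^+_{x\kappa(x)}>0$, and you should state this caveat rather than defer to the cited lemma. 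The second condition holds automatically in the autocatalytic setting by Remark~\ref{rmk:singlereaction}, which is the legitimate content of your first route, but the lemma itself does not assume autocatalysis.
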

\begin{proof}
  $\king(\child)$ is a subgraph of the bipartite graph
  $\king$. By construction of $\king(\child)$, $|X'|=|R'|$ and every
  reaction $r\in R'$ has a single reactant $x=\kappa^{-1}(r)$, and thus
  indegree $1$, and every entity $x\in X'$ has a single reaction
  $\kappa(x)$ for which it serves as the sole reactant. Thus $x$ has
  outdegree $1$. Finally, by assumption $\Metzler{\mathbf{S}[\child]}$ is
  irreducible and thus $\king(\child)$ is a strong block by
  Lemma~\ref{lem:strongblock}. Thus $\king(\child)$ is a fluffle.  
\end{proof}

In the absence of catalysts, the converse is also true
  \cite[Prop.~32]{Golnik:25z}. In RNs with catalysts, however, a fluffle
  $\king(\child)$ may also give rise to a reducible matrix
  $\SM[\child]$. To see this, consider the following RN:
\begin{equation}
  \begin{split}
    x_1 &\underset{r_1}{\longrightarrow} x_2 \\
    x_1 + x_2 &\underset{r_2}{\longrightarrow} x_1 + a		
  \end{split}
\end{equation}
with $X'\coloneqq\{x_1, x_2\}$, $R'\coloneqq \{r_1, r_2\}$, 
child-selection $\child=(X',R',\kappa)$ given by
$\kappa(x_i)\coloneqq r_i$, for $i=1,2$. Then $\king(\child)$, shown in
Fig.~\ref{fig:redFluf} is a fluffle since it is an elementary circuit.  
The corresponding child-selection matrix, however, is 
\begin{equation}
  \SM[\child]=\begin{pmatrix} -1 & 0    \\ 1 & -1   \\ \end{pmatrix},
\end{equation}
which is a reducible Metzler matrix. 

\begin{figure}
  \centering
  \begin{tikzpicture}
    \node[draw, circle] (x_1) at (0,0) {$x_1$};
    \node[draw, circle] (x_2) at (2,2) {$x_2$};
    \node[draw, rectangle] (r_1) at (2,0) {$r_1$};
    \node[draw, rectangle] (r_2) at (0,2) {$r_2$};
    \draw[->, line width=1.0pt] (x_1) to (r_1);
    \draw[->, line width=1.0pt] (r_1) to (x_2);
    \draw[->, line width=1.0pt] (x_2) to (r_2);
    \draw[->, line width=1.0pt] (r_2) to (x_1);
  \end{tikzpicture}
  \caption{Fluffle $\king(\child)$ for a child-selection $\child$ 
    with reducible CS-Metzler matrix.} 
  \label{fig:redFluf}
\end{figure}
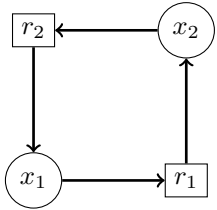

Uniqueness of the reactant for each reaction in an autocatalytic core (see
Lemma~\ref{lem:cs-no-ends} (iii)) implies that the fluffles associated with
autocatalytic cores, i.e., defined by the unique CS $\child$ in
Cor.~\ref{cor:acore-uniqueCS}, are induced subgraphs of $\king$. In the
following we will refer to them as \emph{induced fluffles} (see also
Sect.~\emph{Metzler Matrices and Induced Fluffles} in \cite{Golnik:25z}).
Induced fluffles obviously give rise to Metzler matrices since by
construction each entity is reactant of exactly one reaction. Taken
together, these observations extend the correspondence of Metzler
CS-matrices $\mathbf{S}[\child]$ and induced fluffles $\king[\child]$ in
RNs without catalysts \cite[Cor.47]{Golnik:25z} in the following way:
\begin{corollary}
  \label{cor:ac-inducedfluffle}
  If $(X',R')$ is an autocatalytic core with CS $\child=(X',R',\kappa)$,
  then $\king(\child)$ is the induced graph $\king(\child)=\king[X'\cupdot
    R']$ and $\mathbf{S}[\child]$ is a Metzler matrix.
\end{corollary}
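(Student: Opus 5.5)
The corollary collects two facts, and I will prove them separately: that $\king(\child)$ is the induced subgraph, and that $\mathbf{S}[\child]$ is Metzler. Throughout, let $(X',R')$ be an autocatalytic core and $\child=(X',R',\kappa)$ the unique child-selection from Cor.~\ref{cor:acore-uniqueCS}.

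\emph{Induced subgraph.} By definition, $\king(\child)$ and $\king[X'\cupdot R']$ have the same vertex set $X'\cupdot R'$. By \eqref{eq:childbipartite} they also have the same RM-edges, $E_2(\child)=E_2(\king)\cap(R'\times X')$. So it suffices to show $E_1(\king[X'\cupdot R'])\subseteq E_1(\child)$. Take an MR-edge $(x,r)$ with $x\in X'$, $r\in R'$ and $s^-_{xr}>0$. Lemma~\ref{lem:square+kappa} (via Lemma~\ref{lem:cs-no-ends}(iii) and $|X'|=|R'|$) says that every $r\in R'$ has exactly one reactant in $X'$, namely $\kappa^{-1}(r)$. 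Hence $x=\kappa^{-1}(r)$, i.e.\ $r=\kappa(x)$, so $(x,r)\in E_1(\child)$. This gives $\king(\child)=\king[X'\cupdot R']$.

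\emph{Metzler property.} The same uniqueness handles the off-diagonal entries. For $x\neq y$ in $X'$, $x$ is not a reactant of $\kappa(y)$, so $s^-_{x\kappa(y)}=0$ and
\[
\mathbf{S}[\child]_{xy}=s^+_{x\kappa(y)}-s^-_{x\kappa(y)}=s^+_{x\kappa(y)}\ge 0 .
\]
A Metzler matrix only needs nonnegative off-diagonal entries, so no condition on the diagonal is required. For $|X'|\ge 2$ this is exactly Cor.~\ref{cor:diagonal}, which even gives a non-positive diagonal. For $|X'|=1$ the matrix is $1\times 1$ and is Metzler trivially; by Remark~\ref{rmk:singlereaction} its single entry is positive, consistent with the statement, which claims only the Metzler property.

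The main obstacle is making sure that the uniqueness of the reactant is the statement ``unique reactant in $X'$ for each reaction'' (Lemma~\ref{lem:cs-no-ends}(iii) combined with the bijection in Lemma~\ref{lem:square+kappa}). Uniqueness of the reaction per entity alone would not suffice. With $|X'|=|R'|$, each $r_x$ is distinct, so each reaction is some $r_x$ and has $x$ as its only reactant in $X'$. Reactants outside $X'$, the food, are irrelevant because the induced subgraph only sees $X'$.
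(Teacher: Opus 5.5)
Your proof is correct and follows the paper's own argument. The paper likewise uses the uniqueness of the reactant in $X'$ for each reaction (Lemma~\ref{lem:cs-no-ends}(iii) together with $|X'|=|R'|$ from Lemma~\ref{lem:square+kappa}) to force every MR-edge of $\king[X'\cupdot R']$ to be a matching edge $(x,\kappa(x))$, and uses $s^-_{x\kappa(y)}=0$ for $x\neq y$ to get the Metzler property (see the remark just before Cor.~\ref{cor:diagonal}); the case $|X'|=1$, where Lemma~\ref{lem:cs-no-ends} does not formally apply, is trivial for both claims, as you observe for the Metzler part.
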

As a consequence, only induced fluffles are candidates for autocatalytic
cores, while fluffles in general may give rise to autocatalytic
CS-cores. Fluffles also have a simple characterization in terms of
so-called (directed) ear decompositions \cite{Grotschel:79}
\begin{proposition} \cite[Thm.33]{Golnik:25z}
  A graph G is a fluffle if and only if it is bipartite with vertex set
  $X\cupdot R$ and it has an ear decomposition such that every ear
  initiates in a reaction vertex $r\in R$ and terminates in a substrate
  vertex $x\in X$.
\end{proposition}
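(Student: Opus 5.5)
The plan is to prove both directions by combining the classical ear-decomposition theorem for strong blocks with a degree and vertex count along the ears. I will use the directed ear decomposition of \cite{Grotschel:79}. A digraph is strongly connected if and only if it is obtained from a single vertex (equivalently, from a directed circuit) by successively adding directed \emph{ears}: directed paths whose endpoints lie in the current subgraph and whose internal vertices are new, or directed circuits sharing one vertex with it. Moreover, the graph is a strong block if and only if, apart from the initial circuit, all ears can be taken to be \emph{open}, i.e.\ to have distinct endpoints. I take this characterization as given. Here ``ear decomposition'' in the statement means such a decomposition into an initial circuit plus open ears. Throughout, $G$ is bipartite on $X\cupdot R$, so every directed path or circuit alternates between $X$ and $R$.

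\emph{($\Rightarrow$)} Let $G$ be a fluffle. Since $G$ is a strong block, it has an ear decomposition $C_0,P_1,\dots,P_m$ with $C_0$ a directed circuit and each $P_j$ an open directed path from $u_j$ to $v_j$. Let $H_{j-1}=C_0\cup P_1\cup\dots\cup P_{j-1}$. Each $H_{j-1}$ is strongly connected, so $u_j$ already has an out-edge in $H_{j-1}$. The first edge of $P_j$ is a further out-edge of $u_j$, so $u_j$ has out-degree at least $2$ in $G$. Since every entity vertex has out-degree $1$, $u_j\in R$. Symmetrically, $v_j$ has an in-edge in $H_{j-1}$ and receives another from the last edge of $P_j$. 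Since reaction vertices have in-degree $1$, $v_j\in X$. Hence every ear starts in a reaction vertex and ends in an entity vertex.

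\emph{($\Leftarrow$)} Let $G$ be bipartite with such a decomposition. By the characterization above, $G$ is a strong block. I will show by induction over the ears that in every $H_j$ each entity vertex has out-degree $1$, each reaction vertex has in-degree $1$, and $|X(H_j)|=|R(H_j)|$. For the base case, $C_0$ is an alternating circuit, so all its vertices have in- and out-degree $1$ and it contains equally many entity and reaction vertices. For the inductive step, consider an ear $P_j$ from $r\in R$ to $x\in X$.
\begin{itemize}
\item By bipartiteness its internal vertices read $x_1,r_1,\dots,x_k,r_k$ with $k\ge 0$. This adds $k$ entity and $k$ reaction vertices, preserving the balance.
\item Each new internal vertex gets in- and out-degree exactly $1$.
\item The endpoint $r$ only gains an out-edge, so its in-degree stays $1$.
\item The endpoint $x$ only gains an in-edge, so its out-degree stays $1$.
\end{itemize}
So all three fluffle conditions hold for $G=H_m$.

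The main obstacle is the appeal to the strong-block version of the ear theorem, namely that 2-connectivity of the underlying graph lets one avoid closed ears after the initial circuit, and conversely that open ears preserve 2-connectivity. If this is not citable in exactly that form, I would first check the converse directly: adding an open path between two distinct vertices of a 2-connected graph keeps it 2-connected, and the path keeps it strongly connected. For the forward direction I would prove the existence of open ears via the standard argument. Take a maximal subgraph $H$ built so far. If $H\neq G$, pick an edge leaving $H$. Strong connectivity gives a directed path back into $H$. If that path returns only to its starting vertex, that vertex would be a cut vertex of the underlying graph, contradicting 2-connectivity, so one can reroute to obtain an ear with distinct endpoints.
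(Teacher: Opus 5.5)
Your argument is correct and is essentially the intended route. The paper gives no proof here, only \cite[Thm.~33]{Golnik:25z} and Grötschel's directed ear decompositions of strong blocks, and you combine that ear theorem with the same in/out-degree and $|X|=|R|$ bookkeeping along the ears. Two points should be tightened: in your fallback sketch for open ears, apply the cut-vertex argument to a whole weak component of $G-V(H)$, all of whose edges to $H$ would then meet a single vertex $h$, rather than to one returning path; and state explicitly the convention, implicit in the paper, that a digon counts as a strong block.
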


In particular, every elementary circuit $C_i$ in $\king$ is a fluffle and
any fluffle subgraph $G$ in $\king$ can also be constructed by successively
superimposing a sequence $(C_1,C_2,\dots,C_k)$ of elementary circuits such
that $G_1=C_1$, $G_i=G_{i-1}\cup C_i$ for $1\le i\le k$, and $G=G_k$, while
each $G_i$ is again a fluffle. Then $G_i\setminus G_{i-1}$ is a disjoint
union of ears. Since we are mostly interested in maximal fluffles
determined by the set $E_1=\bigcup_{j=1}^i E_1(C_i)$ of MR-edges and the
induced RM-edges $E_2(\king[V(G_i)]$ it suffices to consider superposition
of elementary circuits such that $V(G_{i-1})\subsetneq V(G_i)$, i.e.,
$V(G_i)\setminus V(G_{i-1})\ne\emptyset$. This observation is used already
in \cite{Golnik:25z} to reduce the efforts for enumerating maximal fluffles
as candidates for autocatalytic strong blocks.

As a consequence of Lemma~\ref{lem:fufflesufficient} the maximal fluffle
subgraphs in $\king$ contain all autocatalytic cores as well as all
autocatalytic CS-cores. A na{\"\i}ve strategy therefore is to
\begin{itemize}
\item[(1)] enumerate the set $\mathfrak{F}$ of fluffles in $\king$
\item[(2)] identify the subset $\mathfrak{A}\subseteq\mathfrak{F}$ of
  fluffles for which $\mathbf{S}[\child]$ is positive semidefinite,
  where $\child$ is the unique CS defined by the MR-edges of the fluffle,
\item[(3)] construct the Hasse diagram with respect to submatrices or
  sub-CS, respectively, and 
\item[(4)] retain only the minimal elements.
\end{itemize}
While correct, this approach is too demanding in terms of computational
resources for large networks. It is feasible for small networks such as
models of the core carbon metabolism, or for partial enumeration efforts that
restrict the size of elementary circuits and maximal fluffles, see the Python implementation \texttt{autogato}
\cite{Golnik:25z}.

A much more efficient algorithmic approach can be taken if only
autocatalytic cores are to be computed. The key observation is that in this
case, only induced fluffles have to be considered. Induced fluffles in turn
are superpositions of elementary circuits $C$ that themselves form induced
fluffles, i.e., that do not have chords of the form $(x,r)$ with $x\in
X\cap V(C)$, $r\in R\cap V(C)$ and $(x,r)\in E(\king)\setminus E(C)$. We
call such an elementary circuit $C$ a \emph{Metzler circuit} in $\king$. In
\cite{Golnik:26q} we describe an algorithm to generate Metzler circuits
efficiently. Using the theoretical results of \cite{Golnik:25z} for RNs
without catalysis, it is not difficult to show that all autocatalytic cores
are superpositions of Metzler circuits. As an immediate consequence of
Cor.~\ref{cor:ac-inducedfluffle}, we observe that an analogous result
remains true in RNs with catalysts:

\begin{proposition}
  If $G$ is a subgraph of $\king$ that forms an autocatalytic core then $G$
  is the superposition of Metzler circuits in $\king$.
\end{proposition}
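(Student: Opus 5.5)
The plan is to combine Cor.~\ref{cor:ac-inducedfluffle} with the ear-decomposition characterization of fluffles, \cite[Thm.33]{Golnik:25z}. Let $G$ be the subgraph corresponding to the autocatalytic core $(X',R')$ and let $\child=(X',R',\kappa)$ be its unique CS (Cor.~\ref{cor:acore-uniqueCS}). By Cor.~\ref{cor:ac-inducedfluffle}, $G=\king(\child)=\king[X'\cupdot R']$ is an induced subgraph of $\king$. By Lemmas~\ref{lem:Blokhuis4} and \ref{lem:fufflesufficient}, applied to the Metzler matrix $\SM[\child]=\Metzler{\SM[\child]}$, it is also a fluffle. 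So $G$ is an induced fluffle.

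The first step is to show that $G$ is the union of elementary circuits contained in $G$. Since $G$ is strongly connected, every edge $(u,w)$ of $G$ lies on a directed path from $w$ back to $u$ inside $G$. Shortening this path to a simple one gives an elementary circuit $C\subseteq G$ through $(u,w)$. Taking one such circuit for each edge, we get $G=\bigcup_i C_i$. Equivalently, we could use the ear decomposition of fluffles described after \cite[Thm.33]{Golnik:25z}, which already writes $G$ as a superposition of elementary circuits $C_1,\dots,C_k$ with $C_i\subseteq G$.

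The second step, which is the only substantive point, is to check that each such $C_i$ is a Metzler circuit. I would argue by contradiction. Suppose $C$ has an MR-chord $(x,r)\in E(\king)\setminus E(C)$ with $x,r\in V(C)\subseteq X'\cupdot R'$. Because $G$ is induced, $(x,r)\in E(G)$, so $(x,r)$ is an MR-edge of $\king(\child)$ and hence $r=\kappa(x)$. Since $x$ lies on $C$ and has out-degree $1$ in the fluffle $G$, its unique out-edge in $G$ is $(x,\kappa(x))$. The circuit $C$ must leave $x$ through that same edge, so $(x,r)\in E(C)$, which is a contradiction. Hence $C$ is chordless in the required sense, i.e.\ it is a Metzler circuit in $\king$.

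I expect the main care to be needed in the bookkeeping of what "induced" buys us. A chord is an edge of $\king$, not just of $G$, so inducedness is exactly what places it inside $G$ and lets the out-degree-$1$ property apply. Combining the two steps, $G=\bigcup_i C_i$ is a superposition of Metzler circuits, as claimed.
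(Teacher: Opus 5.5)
Your proof is correct and takes essentially the paper's route: Cor.~\ref{cor:ac-inducedfluffle} makes $G$ an induced fluffle, $G$ is then a superposition of elementary circuits, and inducedness places any MR-chord inside $G$, where the fluffle degree condition rules it out. The only difference is cosmetic. You use out-degree $1$ of the entity $x$ (the circuit must leave $x$ through $(x,\kappa(x))=(x,r)$), whereas the paper uses in-degree $1$ of the reaction $r$ (on the other circuit, $r$ would have a second predecessor $y\neq x$).
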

\begin{proof}
  If $G$ is an autocatalytic core, then by Cor.~\ref{cor:ac-inducedfluffle}
  it induces a fluffle in $\king$, and hence it has a representation as
  a superposition of elementary circuits.  All MR-edges in the induced
  subgraph must be located along one of the constituent circuits. Using
  that $\king[V(G)]$ is a fluffle implies that every MR-edge in
  $\king[V(G)]$ must lie in $G$. If $(x,r)\in E(C_1)$ were a chord in some
  other constituent circuit $C_2$, in the superposition $C_1\cup C_2$ then $r$
  has a predecessor $y\ne x$ in $C_2$, contradicting that $C_1\cup C_2$
  and thereby also $G$ is a fluffle.
\end{proof}

Hence, a Metzler circuit $C$ is either itself an autocatalytic core or it
can be combined with an overlapping non-autocatalytic Metzler circuit $C'$.
If this superposition $C\cup C'$ is an induced fluffle then it is again a
candidate for an autocatalytic core. This basic idea is used
\cite{Golnik:26q} to devise a highly efficient algorithm (\texttt{autogatito}) for enumerating
autocatalytic cores. The proposition above ensures that this algorithm
remains correct also in RNs with catalysts.

For the enumeration of autocatalytic CS-cores, it might be helpful to first
determine all autocatalytic cores using \texttt{autogatito} since each of
them is also an autocatalytic CS-core. Then it suffices to find
autocatalytic CS-cores that are not also autocatalytic cores; let us call
these \emph{extra CS cores} for short.  Recall that each maximal fluffle
$G$ is uniquely determined by its matching $E_1(G)=\{(x,\kappa(x))| x\in
V(G)\cap X\}$, i.e., by the underlying child-selection. Moreover, a maximal
fluffle $G_1$ is a subgraph of a maximal fluffle $G_2$ if and only if
$E_1(G_1)\subseteq E_1(G)$ \cite{Golnik:25z}. A fluffle $H\in\mathfrak{F}$
thus is a candidate for being an extra CS core if (i) there is an
autocatalytic core $G$ such that $X(G)\subseteq X(H)$, $R(G)\subseteq R(H)$
and $E_1(G)\not\subseteq E_1(H)$ and (ii) there is no autocatalytic core or
extra-CS-core $F$ with $E_1(F)\subsetneq E_1(H)$. How to utilize this
observation for the construction of an algorithm is, however, not obvious.
Hence we leave this topic as an open direction for future research.

\section{Irreversibility of Autocatalytic Cores}

In previous work, it has been noted that in a setting where each 
reaction $r$ is assumed to be reversible, the directionality of an 
autocatalytic core is, anyway, uniquely determined
by its stoichiometry, see e.g.\ SI of \cite{Kosc:25}. In this section, we
give a complete account of this phenomenon in our setting.
\begin{definition}
  The inverse $\bar{r}$ of a reaction $r\in R$ is the reaction given
  by $s^-_{x\bar{r}}\coloneqq s^+_{xr}$ and $s^+_{x\bar{r}}\coloneqq s^-_{xr}$ 
  for all $x\in X$.
\end{definition}
It follows immediately that the columns in the stoichiometric matrix
belonging to a reaction $r$ and its inverse $\bar{r}$ satisfy
$\mathbf{S}[X,\{\bar{r}\}]= -\mathbf{S}[X,\{r\}]$.  Given a subset
$R'\subseteq R$ we write $\bar{R}' \coloneqq\{\bar{r}\;|r\;\in R'\}$ for
the corresponding set of inverse reactions and call $(X',R'\cup\bar{R}')$
the \emph{reversible extension} of the subgraph $(X',R')$. To avoid an
overload of notation we denote by $\SM$ the stoichiometric matrix for both
networks, as clear in the context.

In this section, we will be concerned with the structure of the reversible
extension $(X',R'\cup \bar{R}')$ of a given autocatalytic core
$(X',R')$. Note that an autocatalytic core $(X',R')$ cannot contain both a reaction $r$ and its inverse $\bar{r}$, since its stoichiometric matrix $\mathbf{S}'$ is invertible via Lemma \eqref{lem:square+kappa}.

For our discussion below it will be useful to distinguish elementary
circuits in the bipartite K{\"o}nig graph's graph $\king$ in terms of their
stoichiometric coefficients. Our terminology is inspired by
\cite{Deshpande:14}.
\begin{definition}
  \label{def:Deshpande}
  Let $C\coloneqq (x_1,r_1,...,x_n,r_n,x_1)$ be an elementary circuit of
  length $n\ge 1$ in the bipartite K{\"o}nig graph $\king$ and define
  \begin{equation}\label{eq:netstoich}
    \stout(C) \coloneqq
    \prod_{i=1}^n {(s^+_{x_{i+1}r_i} - s^-_{x_{i+1}r_i})} 
    \qquad\text{and}\qquad 
    \stin(C) \coloneqq
    \prod_{i=1}^n {(s^-_{x_ir_i} -  s^+_{x_ir_i})}
  \end{equation}
  Then we say that $C$ is \emph{autocatalytic} if $\stout(C)>\stin(C)$,
  \emph{drainable} if $\stout(C)<\stin(C)$, and \emph{neutral} if
  $\stout(C)=\stin(C)$.
\end{definition}
Neutral cycles are called s-cycles (stoichiometric cycles) in
\cite{Banaji:10,CraciunFeinberg:06}, where only networks without explicit
catalysts are considered.  Any elementary circuit $C$ is a fluffle in
$\king$ with the associated child-selection $\child$ naturally given by the
species and reactions in $C$ with the bijection $\kappa(x_i)=r_i$. We can
therefore express the products \eqref{eq:netstoich} of the net
stoichiometric coefficients of the reactants and products along the circuit
$C$ in the following form:
\begin{equation}
  \label{eq:stout} 
  \stout(C) = \prod_{i=1}^n \mathbf{S}[\child]_{i+1,i}
  \qquad\text{and}\qquad
  \stin(C) = (-1)^n \prod_{i=1}^n \mathbf{S}[\child]_{i,i}.
\end{equation}

Eq.~\ref{eq:stout} suggests that there is a close connection between the
determinant of the Metzler part $\Metzler{\SM[\child]}$ of a CS matrix
${\SM[\child]}$ and the elementary circuits in the bipartite König's graph
$\king$. Let us start from the definition
\begin{equation}\label{eq:leibniz}
  \det\mathbf{A}\coloneqq\sum_{\pi\in\mathsf{S}_{k}} \sgn(\pi)
  \prod_{j=1}^{k} \mathbf{A}_{j, \pi(j)}.
\end{equation}
Each permutation $\pi$, in turn, can be written as a product of cyclic
permutations $\chi$ on disjoint subsets of $\{1,\dots,k\}$. We say that a
cycle $\chi$ is \emph{contributing} if it comprises at least two elements
and $\prod_{j\in \chi} \mathbf{A}_{j,\chi(j)}\ne 0$, i.e., if
$\mathbf{A}_{j,\chi(j)}\ne 0$ for all $j\in \chi$.  Now, consider
$\mathbf{A}=\Metzler{\SM[\child]}$, we obtain the following lemma.
\begin{lemma}\label{lem:elemcircuitscont}
  An elementary circuit $C=(x_1,r_1,\dots,x_m,r_m)$ in the fluffle
  $\king(\child)$ defines a corresponding contributing permutation cycle
  $\chi=(x_1,x_2,\dots,x_m)$ in $\Metzler{\SM[\child]}$ if and only if (i)
  $C$ is not a digon and (ii) $C$ does not contain an edge $(r_i,x_{i+1})$
  with $s^+_{x_{i+1}r_{i}}\le s^-_{x_{i+1}r_{i}}$.
\end{lemma}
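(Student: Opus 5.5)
The plan is to translate the circuit $C$ edge by edge into matrix entries of $\Metzler{\SM[\child]}$ and then read off when every factor of the cycle product is nonzero. Since $C$ lies in $\king(\child)$, every MR-edge $(x_i,r_i)$ of $C$ belongs to $E_1(\child)$, so $r_i=\kappa(x_i)$. Every RM-edge $(r_i,x_{i+1})$ lies in $E_2(\child)$, so $s^+_{x_{i+1}r_i}>0$. In CS-matrix coordinates the edge pair $x_i\to r_i\to x_{i+1}$ therefore corresponds to the entry $\SM[\child]_{x_{i+1}x_i}=\mathbf{S}_{x_{i+1}\kappa(x_i)}=s^+_{x_{i+1}r_i}-s^-_{x_{i+1}r_i}$, with indices taken mod $m$. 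The circuit thus determines the cyclic permutation $\chi$ on $\{x_1,\dots,x_m\}$. I will fix the orientation convention so that the factors are $\mathbf{A}_{x_{i+1},x_i}$, i.e.\ I use $\chi^{-1}$ if needed; this is harmless because a cycle and its inverse have the same sign and the same support condition, and it matches \eqref{eq:stout}. The product over $\chi$ is then $\prod_{i}\Metzler{\SM[\child]}_{x_{i+1}x_i}$.

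For the forward direction, suppose $\chi$ is contributing. By definition $\chi$ has at least two elements, so $m\ge 2$ and $x_{i+1}\neq x_i$ for every $i$. I will interpret ``digon'' as the length-$2$ circuit $(x,r,x)$ arising from a catalyst, i.e.\ $m=1$, so condition (i) follows. Each factor is an off-diagonal entry of the Metzler part. By definition of $\Metzler{\cdot}$, such an entry is nonzero exactly when $\SM[\child]_{x_{i+1}x_i}>0$, that is, when $s^+_{x_{i+1}r_i}>s^-_{x_{i+1}r_i}$. Nonvanishing of every factor therefore gives (ii).

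For the converse, (i) gives $m\ge 2$, so all indices are distinct because $C$ is elementary, and every factor is off-diagonal. Condition (ii) makes each such factor strictly positive, hence nonzero, so $\chi$ is contributing.

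The main obstacle is bookkeeping rather than substance. I have to state the digon convention precisely: in the bipartite graph a circuit $(x_1,r_1,x_1)$ has length $2$ and gives $m=1$, which corresponds to a diagonal entry and not to a permutation cycle of length $\ge2$. I also have to check that the orientation of the cycle is consistent with the Leibniz indexing in \eqref{eq:leibniz}. The remaining argument only uses that off-diagonal entries of $\Metzler{\SM[\child]}$ are zero exactly when the corresponding net stoichiometry is nonpositive.
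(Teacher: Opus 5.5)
Your proposal is correct and follows essentially the same route as the paper: each step $x_i\to\kappa(x_i)\to x_{i+1}$ of $C$ becomes the off-diagonal entry $\SM[\child]_{x_{i+1}x_i}=s^+_{x_{i+1}r_i}-s^-_{x_{i+1}r_i}$, which survives in $\Metzler{\SM[\child]}$ exactly when it is positive, and digons are exactly the singleton cycles. Your explicit orientation bookkeeping is cleaner than the paper's proof, which mixes $\Metzler{\SM[\child]}_{x,\chi(x)}$ with $\mathbf{S}_{\chi(x),\kappa(x)}$. The right justification, though, is simply that the circuit determines the cycle whose Leibniz factors are $\mathbf{A}_{x_{i+1},x_i}$; a cycle and its inverse do not in general share the same nonvanishing condition.
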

\begin{proof} Every elementary circuit $C$ in $\king(\child)$ defines a
permutation cycle $\chi$ which is contributing unless -- by definition --
$\chi$ is a singleton $\chi=(x)$, i.e., $C=(x,r)$, or
$\mathbf{S}_{\chi(x),\kappa(x)}=0$, for some $x$. The former conditions
translate to (i) and the second to (ii). In fact,
$\mathbf{S}_{\chi(x),\kappa(x)}=0$ occurs if and only if $\chi(x)$ is a
product of the reaction $\kappa(x)$, but not a net product,
i.e. $s^+_{\chi(x)\kappa(x)}\le s^-_{\chi(x)\kappa(x)}$. Conversely,
suppose $\chi$ is a contributing permutation cycle. In particular, $\chi$
comprises at least two elements and thus $x \ne \chi(x)$. We have that
$\Metzler{\SM[\child]}_{x,\chi(x)}\ne 0$ if and only if there is a (unique)
reaction $r$ such that $r=\kappa(x)$ and $\chi(x)$ is a product of $r$ with
$\mathbf{S}_{\chi(x),\kappa(x)}\ne0$. The latter condition is again
equivalent to (ii), while the condition on comprising at least two elements
is equivalent to (i).
\end{proof}

\begin{figure}
  \centering
  \begin{minipage}[c]{0.5\textwidth}
    \centering
    \begin{tikzpicture}
      \node[draw, circle] (x1) at (-4,0) {$x_1$};
      \node[draw, circle] (x2) at (0,-1) {$x_2$};
      \node[draw, circle] (x3) at (0,) {$x_3$};
      \node[draw, rectangle] (r1) at (-2,0) {$r_1$};
      \node[draw, rectangle] (r2) at (-2,-1) {$r_2$};
      \node[draw, rectangle] (r3) at (1,0) {$r_3$};
      \draw[->] (x1) to [bend left = 30] (r1);
      \draw[->] (r1) to [bend left = 30] (x1);
      \draw[->] (r1) to [bend left = 30] (x3);
      \draw[->] (x2) to (r2);
      \draw[->] (r2) to [bend left = 30] (x1);
      \draw[->] (x3) to [bend left = 30] (r3);
      \draw[->] (r3) to [bend left = 30] (x2);
    \end{tikzpicture}
  \end{minipage}%
  \hfill%
  \begin{minipage}[c]{0.5\textwidth}
    \centering
    \begin{tikzpicture}
      \node[draw, circle] (x1) at (0,0) {$x_1$};
      \node[draw, circle] (x2) at (4,0) {$x_2$};
      \node[draw, circle] (x3) at (2,2) {$x_3$};
      \node[draw, rectangle] (r1) at (0,2) {$r_1$};
      \node[draw, rectangle] (r2) at (2,0) {$r_2$};
      \node[draw, rectangle] (r3) at (4,2) {$r_3$};
      \draw[->] (x1) to  (r1);
      \draw[->] (r1) to  (x3);
      \draw[->] (x2) to (r2);
      \draw[->] (r2) to (x1);
      \draw[->] (r2) to  (x3);	      
      \draw[->] (x3) to (r3);
      \draw[->] (r3) to  (x2);
    \end{tikzpicture}
  \end{minipage}
  \caption{\textbf{Left}: Example of a fluffle $\king(\child)$ with a
    digon, associated to the RN \eqref{examplei}. and \textbf{Right:}
    Example of a fluffle $\king(\child)$ with a catalyst $x_1$ such
    that $s^+_{x_1r_2} \leq s^-_{x_1r_2}$, associated with the RN
    \eqref{exampleii}.}
  \label{fig:ExElCirc}
\end{figure}
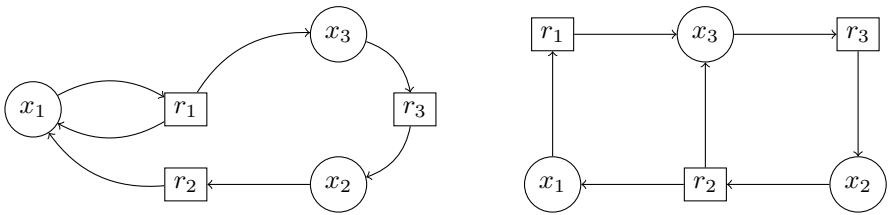

To see that conditions (i) and (ii) are actually important in RN with 
catalysts consider the following two examples:
\par\noindent
\begin{minipage}[c]{0.5\textwidth}
  \begin{equation}\label{examplei}
    \begin{split}
      x_1 + x_2 & \underset{r_1}{\longrightarrow} x_3 + x_1 	\\[0.5em]
      x_2 & \underset{r_2}{\longrightarrow} x_1 		\\[0.5em]
      x_3 & \underset{r_3}{\longrightarrow} x_2			\\[0.5em]
      \kappa_1(x_i) & = r_i, i=1,2,3 						\\[1.0em]			 
      \Metzler{\SM[\child_1]} &= \begin{pmatrix}
        0 & 1  &  0 \\ 
	0 & -1 &  1 \\  
	1 & 0  & -1 \end{pmatrix}
    \end{split}
  \end{equation}
\end{minipage}%
\begin{minipage}[c]{0.5\textwidth}
  \begin{equation}\label{exampleii}
    \begin{split}
      x_1 & \underset{r_1}{\longrightarrow} x_3  		\\[0.5em]
      2x_1 + x_2 & \underset{r_2}{\longrightarrow} x_1 +x_3	\\[0.5em]
      x_3 & \underset{r_3}{\longrightarrow} x_2			\\[0.5em]
      \kappa_2(x_i) & = r_i, i=1,2,3 						\\[1.0em]			 
      \Metzler{\SM[\child_2]} &= 
      \begin{pmatrix} 	
         -1 & 0 & 0 \\ 
	0  & -1 & 1 \\  
	1  &  1 & -1
      \end{pmatrix}
    \end{split}
  \end{equation}
\end{minipage}
\vspace*{1ex}
\par\noindent In each case the matrix $\Metzler{\SM[\child_{1/2}]}$ has one
contributing permutation cycle, \linebreak $\chi_1=(x_1,x_3,x_2)$ and
$\chi_2=(x_2,x_3)$, respectively. In contrast, the associated fluffle
$\king(\child_{1})$ as depicted in Fig.~\ref{fig:ExElCirc} (\textbf{left})
has two elementary circuits: the one represented in the CS-matrix
\linebreak $C^1_2=(x_1,r_1,x_3,r_3, x_2, r_2, x_1)$, i.e.,
$\chi=(x_1,x_3,x_2)$ and the additional digon $C^1_1=(x_1, r_1, x_1)$
corresponding to the trivial permutation cycle $\chi_1=(x_1)$. Similarily,
$\king(\child_{2})$ (Fig.~\ref{fig:ExElCirc} \textbf{right}) has two
elementary circuits, $C^1_1=(x_2,r_2,x_3,r_3)$, which corresponds to the
permutation cycle $\chi_2=(x_2,x_3)$, and
$C^2_2=(x_1,r_1,x_3,r_3, x_2, r_2, x_1)$ that is not reflected in
$\Metzler{\SM[\child_{2}]}$.

Both conditions (i) and (ii) in Lemma \ref{lem:elemcircuitscont} are void
in RNs without catalysts, and hence we recover the 1-1 correspondence
between elementary circuits in $\king(\child)$ and the contributing
circuits of $\Metzler{\SM[\child]}$ \cite[Lem.~16]{Golnik:25z}. In the
general case, Lemma \ref{lem:elemcircuitscont} motivates the following
definition.
\begin{definition}[Contributing circuits]\label{def:contributingcircuits}
  We say that an elementary circuit $C$ in $\king(\child)$ is
  \emph{contributing} if its corresponding permutation cycle is
  contributing, i.e.  if $|V(C)|\ge 4$ and $C$ does not contain an edge
  $(r_i,x_{i+1})$ with $s^+_{x_{i+1}r_{i}}\le s^-_{x_{i+1}r_{i}}$.
\end{definition}
Comparing the Leibniz formula Equ.~\eqref{eq:leibniz}
with a single permutation cycle and Def.~\ref{def:Deshpande} immediately
yields
\begin{lemma}
  Let $C$ be an induced elementary circuit in $\king$ and denote by
    $\child\coloneqq\child(C)$ the corresponding CS. Then
  \begin{equation}
    \det \mathbf{S}[\child] = 
    (-1)^{n-1} (\stout(C)-\stin(C)).
  \end{equation}
\end{lemma}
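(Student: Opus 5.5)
The plan is to compute $\det\mathbf{S}[\child]$ directly from the Leibniz formula \eqref{eq:leibniz}. We order $X'=\{x_1,\dots,x_n\}$ and $R'=\{r_1,\dots,r_n\}$ along $C=(x_1,r_1,\dots,x_n,r_n,x_1)$ and set $\kappa(x_i)=r_i$. With this ordering, $\mathbf{S}[\child]_{ij}=\mathbf{S}_{x_i r_j}$.

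The first step is to determine which entries can be nonzero. Since $C$ is induced in $\king$, the only edges of $\king[X'\cupdot R']$ are the circuit edges $(x_i,r_i)$ and $(r_i,x_{i+1})$, indices taken mod $n$. Hence $s^-_{x_jr_i}>0$ only for $j=i$, and $s^+_{x_jr_i}>0$ only for $j=i+1$. Consequently the only possibly nonzero entries of $\mathbf{S}[\child]$ are the diagonal entries $\mathbf{S}[\child]_{ii}=s^+_{x_ir_i}-s^-_{x_ir_i}$ and the subdiagonal entries $\mathbf{S}[\child]_{i+1,i}=s^+_{x_{i+1}r_i}-s^-_{x_{i+1}r_i}$, with the cyclic index convention so that the corner entry $\mathbf{S}[\child]_{1,n}$ is included. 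For $n\ge 2$ the diagonal term is simply $-s^-_{x_ir_i}$, because induced means $(r_i,x_i)$ is not an edge. The formula for $\stin$, however, is written to cover the digon case $n=1$ as well, so I keep the general expression.

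The second step is the permutation count. A permutation $\pi$ contributes a nonzero product only if $\pi(j)\in\{j,j-1\}$ for every $j$. I will show that this forces $\pi$ to be either the identity or the full cyclic shift $j\mapsto j-1$. Indeed, if $\pi(j)=j-1$ for some $j$, then since $j-1$ is already taken, $\pi(j-1)\neq j-1$, so $\pi(j-1)=j-2$. Propagating this around the cycle gives the shift. The identity contributes $\prod_i \mathbf{S}[\child]_{ii}=(-1)^n\stin(C)$ by \eqref{eq:stout}. The $n$-cycle has sign $(-1)^{n-1}$ and contributes $\prod_i \mathbf{S}[\child]_{i+1,i}=\stout(C)$, which is the same product because the factors just form the set of subdiagonal entries. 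Summing these two terms gives
\[
\det\mathbf{S}[\child]=(-1)^{n-1}\stout(C)+(-1)^n\stin(C)=(-1)^{n-1}\bigl(\stout(C)-\stin(C)\bigr).
\]

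The main obstacle is the degenerate case $n=1$, where $C$ is a digon $(x_1,r_1,x_1)$. Here the identity and the cyclic shift coincide, so the two-term count above double counts. In this case $\det\mathbf{S}[\child]=s^+_{x_1r_1}-s^-_{x_1r_1}$ equals $\stout(C)$, while $\stout(C)-\stin(C)=2(s^+_{x_1r_1}-s^-_{x_1r_1})$. The identity therefore fails as stated for a digon, and I would treat $n\ge2$ as the intended scope of the statement, noting the $n=1$ discrepancy separately. For $n\ge 2$ I also need to confirm that no RM-edge $(r_i,x_i)$ exists, which follows from inducedness, so that $\stin$ reduces to $\prod_i s^-_{x_ir_i}$ consistently with the matrix diagonal.
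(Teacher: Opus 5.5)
Your proposal is correct and follows the paper's own argument, which just observes that in the Leibniz expansion of $\det\mathbf{S}[\child]$ only the identity (giving $(-1)^n\stin(C)$) and the full $n$-cycle (giving $(-1)^{n-1}\stout(C)$) survive; you additionally spell out the sparsity pattern and the ``identity or cyclic shift'' step that the paper leaves implicit. Your caveat about digons is also right: for $n=1$ the identity and the ``cycle'' coincide, so for $x\to 2x$ one gets $\det\mathbf{S}[\child]=1$ but $\stout(C)-\stin(C)=2$, so the statement and the paper's one-line justification tacitly require $n\ge 2$, although the signs still agree in the digon case.
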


To accommodate back-edges in $\king(\child)$ introduced by
catalyzed reaction, we will say that a subgraph $H$ of $G$ is \emph{weakly
induced} if the underlying undirected graph of $H$ is an induced subgraph
of the underlying undirected subgraph of $G$. In other words, $H$ is an
induced subgraph $G$ up to ignoring back-edges in $G$. Note that in RNs
without explicit catalysis, induced and weakly induced subgraphs are the
same due to the absence of digons.

Let us now consider an elementary circuit $C$ that is weakly induced in
$\king(\child)$. We observe that $(-1)^{n-1} \det \mathbf{S}[\child] >0$ if
and only if $C$ is an \emph{autocatalytic} circuit in the sense of
Def.~\ref{def:Deshpande}, which by Prop.~\ref{prop:irMHu=sp} is equivalent
to $\mathbf{S}[\child]$ being semipositive, i.e., $\king(C)$ is
autocatalytic in the usual sense.  Moreover, the corresponding fluffle
$\king(\child)$ cannot contain a strictly smaller strong block, and thus
$C$ is an autocatalytic core. We can summarize these observations as
follows:
\begin{corollary}
  \label{cor:weaklyinduced} 
  Let $C$ be a weakly induced elementary circuit in $\king(\child)$. Then
  the following statements are equivalent:
  \begin{itemize}
  \item[(i)]   $C$ is an autocatalytic core.
  \item[(ii)]  $C$ is an autocatalytic elementary circuit in the sense of
    Def.~\ref{def:Deshpande}.
  \item[(iii)] $(-1)^{|C|-1} \det \mathbf{S}[\child(C)] >0$.
  \end{itemize}
\end{corollary}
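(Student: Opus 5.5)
The plan is to make the CS matrix of a weakly induced circuit completely explicit. All three conditions then reduce to one scalar inequality, and minimality is handled by Theorem~\ref{thm:main} and Lemma~\ref{lem:descartes}.

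\textbf{Step 1: the CS matrix.} Write $C=(x_1,r_1,\dots,x_n,r_n,x_1)$, so $|C|=n$ counts entities, and let $\child=\child(C)$ with $\kappa(x_i)=r_i$. Because $C$ is weakly induced, the only edges of $\king[V(C)]$ besides those of $C$ are back-edges.
\begin{itemize}
\item An RM back-edge $(r_i,x_i)$ makes $x_i$ a catalyst of $r_i$.
\item An MR back-edge $(x_{i+1},r_i)$ makes $x_{i+1}$ a catalyst of $r_i$.
\end{itemize}
Hence $\SM[\child]$ has diagonal entries $d_i=\SM_{x_ir_i}$, cyclic subdiagonal entries $c_i=\SM_{x_{i+1}r_i}$, and zeros elsewhere.

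\textbf{Step 2: (ii)$\Leftrightarrow$(iii).} Expand by Leibniz~\eqref{eq:leibniz}. Only the identity and the full $n$-cycle contribute, the latter with sign $(-1)^{n-1}$. This gives
\[\det\SM[\child]=\prod_i d_i+(-1)^{n-1}\prod_i c_i .\]
By~\eqref{eq:stout}, $\prod_i c_i=\stout(C)$ and $\prod_i d_i=(-1)^n\stin(C)$. Therefore $(-1)^{n-1}\det\SM[\child]=\stout(C)-\stin(C)$, which is exactly the preceding lemma extended to weakly induced circuits. The case $n=1$, a digon, is consistent with Remark~\ref{rmk:singlereaction}: there $\stout-\stin=2(s^+_{x_1r_1}-s^-_{x_1r_1})$.

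\textbf{Step 3: (i)$\Rightarrow$(iii).} If $C$ is an autocatalytic core, Lemma~\ref{lem:descartes} gives $\sgn\det\SM[\child]=(-1)^{n-1}$ directly.

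\textbf{Step 4: (iii)$\Rightarrow$(i).} First I would show that the matrix is an irreducible Metzler matrix, so that Prop.~\ref{prop:irMHu=sp} applies. Then (iii) gives Hurwitz-instability (the well-known sign criterion), hence semipositivity. Well-formedness of $(V(C)\cap X,V(C)\cap R)$ is clear from the circuit edges.

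For minimality I would invoke Theorem~\ref{thm:main}. Each $r_i$ has $x_i$ as its only reactant in $X'$, by the induced condition up to back-edges. So it suffices that $\child$ is a CS-core. Its proper sub-CSs have CS matrices that are principal submatrices of a cyclic bidiagonal matrix, hence lower-triangular up to permutation. Such a submatrix is semipositive only if some $d_i>0$.

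\textbf{Main obstacle: the sign conditions.} Step~4 needs $d_i\le 0$ and $c_i>0$ for all $i$. This is where back-edges cause trouble.
\begin{itemize}
\item If $d_j>0$ for some $j$, then $(\{x_j\},\{r_j\})$ is itself an autocatalytic core. For $n\ge 2$, implication (iii)$\Rightarrow$(i) then fails. For example, $d=(1,-1)$ and $c=(1,1)$ give $(-1)^{n-1}\det=2>0$.
\item An MR back-edge $(x_{i+1},r_i)$ can make $c_i\le 0$. With $d=(0,0)$ and $c=(-1,-1)$, one has $\stout(C)>\stin(C)$, yet the matrix is not semipositive.
\end{itemize}
So the statement holds only if either $n=1$, or $n\ge 2$ with $d_i\le 0$ and $c_i>0$ throughout. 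The latter is automatic once every pair $(x_i,r_i)$ is non-autocatalytic and every edge of $C$ is a net-product edge, i.e.\ $C$ is contributing (Def.~\ref{def:contributingcircuits}). I expect the proof to need this hypothesis made explicit, and I would state it as such. Under that hypothesis Steps~1--4 close the cycle of equivalences.
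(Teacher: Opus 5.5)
You are right that the corollary fails as literally stated, and both of your counterexamples are valid. With your reading of ``weakly induced'' (relative to $\king[V(C)]$), (i)$\Rightarrow$(iii) via Lemma~\ref{lem:descartes} and your (ii)$\Leftrightarrow$(iii) survive, so only (iii)$\Rightarrow$(i) breaks. The paper's justification is the paragraph preceding the corollary. It follows your outline: determinant formula, then Prop.~\ref{prop:irMHu=sp}, then minimality. It settles minimality by asserting that $\king(\child)$ contains no strictly smaller strong block. That assertion tacitly assumes the irreducible Metzler sign pattern and overlooks digons. An RM back-edge digon $(x_j,r_j,x_j)$ is itself a smaller strong block. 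A digon created by an MR back-edge is not even visible in $\king(\child)$.

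The genuine gap is in your repair. In Step~4 you claim each $r_i$ has $x_i$ as its only reactant in $X'$ ``by the induced condition up to back-edges''. This contradicts your own Step~1: an MR back-edge $(x_{i+1},r_i)$ makes $x_{i+1}$ a second reactant of $r_i$, and your hypothesis ($d_i\le 0$, $c_i>0$) does not exclude it. Under that hypothesis such a back-edge gives $s^+_{x_{i+1}r_i}>s^-_{x_{i+1}r_i}>0$. Then $(\{x_{i+1}\},\{r_i\})$ is an autocatalytic sub-RN and (i) fails.

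The paper's own Example~\eqref{ex:cat2} is exactly this situation. There $C=(x_1,r_1,x_2,r_2,x_1)$ is weakly induced, the only extra edge being $(x_2,r_1)$, with $d=(-1,-1)$ and $c=(1,2)$. Moreover $C$ is contributing, $\SM[\child]$ is irreducible Metzler and semipositive, $\stout(C)=2>1=\stin(C)$, and $(-1)^{1}\det\SM[\child]=1>0$. Yet the only autocatalytic core is $(\{x_2\},\{r_1\})$.

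For $n\ge 2$ the right extra hypothesis has two parts. First, $\king[V(C)]$ contains no MR-edges besides those of $C$; then $c_i=s^+_{x_{i+1}r_i}>0$ is automatic. Second, $d_i\le 0$ for all $i$. Under this hypothesis your Steps~2--4 go through unchanged. By Lemma~\ref{lem:square+kappa} and Cor.~\ref{cor:diagonal}, both conditions are also necessary for (i), so the corrected statement is a true equivalence.

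A minor point: for $n=1$ the identity and the full cycle coincide, so $\det\SM[\child]=d_1$ rather than $2d_1$. Only the sign matters there.
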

Below, we will make use of the fact that the statements in particular also
hold for induced elementary circuits. We are now in the position to prove
the main result of this section:

\begin{theorem}
  \label{lem:reversibilityfull}
  Let $(X',R')$ an autocatalytic core with \emph{reversible extension}
  $(X',R'\cup\bar{R}')$.  If $(X',R'\cup\bar{R}')$ contains more than one
  autocatalytic core, then $(X',R')$ contains a drainable circuit.
\end{theorem}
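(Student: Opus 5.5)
Throughout, $\child=(X',R',\kappa)$ is the unique CS of the core from Cor.~\ref{cor:acore-uniqueCS}, so $\king(\child)=\king[X'\cupdot R']$ is an induced fluffle and $\SM[\child]$ is an irreducible Metzler matrix (Cor.~\ref{cor:ac-inducedfluffle}, Lemma~\ref{lem:Blokhuis4}). $(X',R')$ is itself one autocatalytic core of the reversible extension. So let $(X'',R'')\ne(X',R')$ be a second autocatalytic core of $(X',R'\cup\bar R')$; the goal is to extract a drainable circuit from it.

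\textbf{Step 1: the second core uses an inverse reaction.} Any autocatalytic core contained in $(X',R')$ is $(X',R')$ itself, by minimality. Hence $R''\cap\bar R'\neq\emptyset$. By Cor.~\ref{cor:ac-inducedfluffle} and the superposition proposition, $\king[X''\cupdot R'']$ is an induced fluffle built from Metzler circuits of the extended König graph. Let $C''$ be an elementary circuit of this fluffle that passes through some $\bar r\in\bar R'$.

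\textbf{Step 2: translate $C''$ back to $(X',R')$.} An edge $(x,\bar r)$ exists if and only if $s^+_{xr}>0$, and an edge $(\bar r,y)$ exists if and only if $s^-_{yr}>0$. Since $\king[X'\cupdot R']$ is a fluffle, each $r$ has the unique reactant $\kappa^{-1}(r)$. Therefore every traversal $x\to\bar r\to y$ in $C''$ has $y=\kappa^{-1}(r)$, i.e.\ it reverses the edge pair $y\to r\to x$ of $\king(\child)$. Replacing each inverse reaction by its forward reaction, $C''$ becomes a closed walk in $\king(\child)$ that uses some arcs forwards and some backwards. The orientation-consistent segments, together with the circuits of the strongly connected fluffle, should then yield an elementary circuit $C$ of $\king(\child)$ whose arcs are mostly traversed backwards by $C''$.

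\textbf{Step 3: compare stoichiometries.} Flipping $r$ to $\bar r$ negates the corresponding column of $\SM$. Consequently the net coefficients that $\stout$ and $\stin$ in Def.~\ref{def:Deshpande} pick up along the reversed arcs exchange roles. In the cleanest case, where $C''$ uses only inverse reactions and $C''=\bar C$, this gives $\stout(\bar C)=\stin(C)$ and $\stin(\bar C)=\stout(C)$. Thus $C''$ autocatalytic means exactly $C$ drainable.

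\textbf{Step 4: the general case.} In general the second core is a Metzler CS matrix mixing forward and inverse columns, with sign $(-1)^{k-1}$ of its determinant (Lemma~\ref{lem:descartes}). I would expand this determinant via Lemma~\ref{lem:elemcircuitscont} as a sum over contributing circuits, and compare it with the corresponding principal submatrix of $\SM[\child]$. Minimality of $(X',R')$ forces every proper principal submatrix to be non-Hurwitz-unstable. I then hope that the sign constraints force some circuit of $\king(\child)$, traversed backwards by the new core, to satisfy $\stin>\stout$.

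\textbf{Main obstacle.} The hard part is this mixed case: proving that a Metzler core with mixed orientations must contain a fully reversed subcircuit whose determinant contribution carries the sign. The additional difficulty is handling digons and weakly induced chords, where Cor.~\ref{cor:weaklyinduced} applies only to weakly induced circuits. I would first try reducing to the case where the second core is a single weakly induced circuit, using the fact that Metzler circuits build the core.
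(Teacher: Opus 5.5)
Your Steps 1 and 3 match the paper's proof. The second core $(\tilde X',\tilde R')$ must contain some $\bar r\in\bar R'$. Once it is known to be the reverse of an elementary circuit $\bar C$ of $\king(\child)$, the identities $\stout(C)=\stin(\bar C)$ and $\stin(C)=\stout(\bar C)$, together with Cor.~\ref{cor:weaklyinduced}, give drainability. But there is a genuine gap between these steps. Step~2 ends with ``should then yield'' and Step~4 with ``I then hope'', and those two sentences are where the whole content of the theorem lies. The determinant plan of Step~4 is also unsupported as stated. If the second core mixed forward and inverse columns, its CS matrix would not be a principal submatrix of $\SM[\child]$, so minimality of $(X',R')$ gives no direct sign information about it.

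The missing idea is that the mixed case never occurs. You already have half of the argument: an inverse reaction $\bar r$ has exactly one product, $y=\kappa^{-1}(r)$. For the other half:
\begin{itemize}
\item Well-formedness of the second core forces $y\in\tilde X'$, so $y$ has a child $\tilde\kappa(y)\in\tilde R'$.
\item By Lemma~\ref{lem:square+kappa}, the only forward reaction consuming $y$ is $r$.
\item $r\notin\tilde R'$, because the columns of $r$ and $\bar r$ are negatives of each other while a core's matrix is invertible. Hence $\tilde\kappa(y)\in\bar R'$.
\end{itemize}
In the induced fluffle $\king[\tilde X'\cupdot\tilde R']$ (Cor.~\ref{cor:ac-inducedfluffle}), every entity and every inverse reaction has out-degree $1$. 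So every vertex reachable from $\bar r$ is an entity or an inverse reaction, and strong connectivity gives $\tilde R'\subseteq\bar R'$. Each of these reactions then has a single reactant in $\tilde X'$ (Lemma~\ref{lem:square+kappa}) and a single product. Hence the second core is an induced elementary circuit $C$, a digon if $|\tilde X'|=1$. Its reverse $\bar C$ lies in $\king(\child)$: each arc $(\bar r,y)$ reverses to the matching edge $(y,r)\in E_1(\child)$, and each arc $(x,\bar r)$ reverses to $(r,x)\in E_2(\child)$. Your Step~3 then finishes the proof. This forced-successor argument is how the paper proceeds, and it makes both the determinant expansion of mixed matrices and the concern about chords unnecessary.
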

\begin{proof}
  Since $(X',R')$ is an autocatalytic core, there is a unique associated CS
  \linebreak $\child=(X',R',\kappa)$ with CS matrix
  $\SM[\child]$. Moreover, $(X',R'\cup\bar{R}')$ contains at least one
  autocatalytic core, namely $(X',R')$. First, consider the special case
  $|X'|=|R'|=1$, i.e., $r$ is an autocatalytic reaction. One immediately
  checks that $\bar{r}$ is not autocatalytic and $(\{x\},\{r,\bar{r}\})$
  cannot contain another autocatalytic core. We therefore may assume
  $|X'|=|R'|\ge 2$.  Let $(\tilde{X}',\tilde{R}')$ be an autocatalytic core
  in the reversible extension $(X',R' \cup \bar{R}')$ with associated CS
  $\tilde{\child}=(\tilde{X}',\tilde{R}',\tilde{\kappa})$ and CS-matrix
  $\SM[\tilde{\child}]$ and assume $\tilde\child\ne\child$. Note that we have
  $\tilde{X}'\subseteq X'$ and thus $\tilde{R}'\not\subseteq R'$, since
  otherwise $(X',R')$ would not be an autocatalytic core. Thus there is
  $\bar{r}\in\bar{R}'\cap\tilde{R}'$.

  By Lemma~\ref{lem:square+kappa}, every reaction $r\in R'$ has exactly one
  reactant in $X'$, and hence every reverse reaction $\bar{r}\in\bar{R}'$
  has exactly one product. By Cor.~\ref{cor:diagonal} and
  Lemma~\ref{lem:Blokhuis4}, both $\SM[\child]$ and $\SM[\tilde{\child}]$
  are irreducible Metzler matrices with non-positive diagonal. Thus
  reaction $\bar{r}\in\tilde{R}'\cap\bar{R}'$ has \emph{at most one}
  product $x$ with $s^+_{x\bar{r}}>0$ and at least one net reactant $y$
  with $\SM_{y\bar{r}}<0$. Moreover, the reactant $y\ne x$ satisfies
  $s^+_{y\bar{r}}=0$ because $\king(\child)$ is an induced fluffle and
  digons, i.e., explicit catalysis, can only be of the form
  $(x,\kappa(x),x)$ which prevents $s^-_{yr}=s^+_{y\bar{r}}>0$ since $x\ne
  y$. By Lemma~\ref{lem:square+kappa}, the CS matrix of an autocatalytic
  core is invertible and hence cannot contain two linearly dependent
  columns.  Thus, if $\bar{r}\in\tilde{R}'$ then $r\notin \tilde{R}'$.

  Now consider a reaction $\bar{r}_1\in\bar{R'}\cap\tilde{R}'$. It has a
  unique product $x_1\coloneqq \kappa^{-1}(r_1)$ and by virtue of the CS
  $\tilde\child$, $\kappa^{-1}(r_1)$ is the unique reactant of the reaction
  $\bar{r}_2\coloneqq\tilde\kappa(\kappa^{-1}(r_1))\in\tilde{R}'$. Since
  $x_1$ is a substrate only of $r_1$ in $R'$ and $r_1\notin\tilde{R}'$ we
  have $\bar{r}_2\in\bar{R}'$. Repeating the argument we observe that
  $\tilde{R}'\subseteq\bar{R'}$. Moreover, each reaction
  $\bar{r}\in\tilde{R}'$ has only a single reactant
  $\tilde\kappa^{-1}(\bar{r})\in \tilde{X}'\coloneqq
  \tilde\kappa^{-1}(\tilde{R}')$ and a single product $\kappa^{-1}(r)\in
  \tilde{X}'\subseteq X'$. Since $C\coloneqq
  \king[\tilde{X}'\cup\tilde{R}']$ must be an induced fluffle by
  Lemma~\ref{lem:fufflesufficient} and Cor.~\ref{cor:ac-inducedfluffle},
  and every vertex has indegree and outdegree $1$, we conclude that $C$ is
  an induced elementary circuit.

  First assume $\tilde{X}'=X'$. Then $\tilde{R}'=\bar{R}'$ and thus every
  reaction $r\in R'$ also has only a single reactant and a single product,
  and hence $\king[X'\cup R']$ forms the reverse $\bar{C}$ of the
  elementary circuit $C$. Since $(X',R')$ is autocatalytic with have
  $(-1)^{n-1}\det\SM[\child]=\stout(\bar{C})-\stin(\bar{C})>0$ and thus
  $(-1)^{n-1}\det\SM[\tilde\child]=\stout(C)-\stin(C) =
  \stin(\bar{C})-\stout(\bar{C})<0$, and thus $C$ is a drainable cycle and
  hence not autocatalytic. Therefore we have $\tilde{X}'\subsetneq X'$ and
  $\tilde{R}'\subsetneq\bar{R'}$. Since the elementary circuit $C$ is, by
  assumption, an autocatalytic core, and its reverse $\bar{C}$ is, by
  construction, a subgraph of the fluffle $\king[X'\cup R']$, we have
  $(-1)^{n-1}\det\SM[\tilde\child]=\stout(C)-\stin(C)>0$ and thus
  $(-1)^{n-1}\det\SM[\child]=\stout(\bar{C})-\stin(\bar{C})<0$, i.e,
  $\bar{C}$ is a drainable circuit within the autocatalytic core $(X',R')$.
\end{proof}

The last paragraph in the proof of Thm.~\ref{lem:reversibilityfull} in
particular recovers the irreversibility result in the SI of \cite{Kosc:25}
in the following form:
\begin{corollary}
  Let $(X',R')$ be an autocatalytic core and $R''\subset
  R'\cup\bar{R'}$. Then $(X',R'')$ is an autocatalytic core if and only
  if $R''=R'$.
\end{corollary}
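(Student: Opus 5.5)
The plan is to prove the two directions separately, with the ``if'' direction immediate and the ``only if'' direction reduced to the argument in the proof of Thm.~\ref{lem:reversibilityfull}.

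For the ``if'' direction, $R''=R'$ gives $(X',R'')=(X',R')$, which is an autocatalytic core by hypothesis.

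For the converse, assume $(X',R'')$ is an autocatalytic core with $R''\subseteq R'\cup\bar{R}'$. The size constraints come first. By Lemma~\ref{lem:square+kappa}, $\SM[X',R'']$ is square and invertible, so $|R''|=|X'|=|R'|$. The same invertibility forbids $R''$ from containing both $r$ and $\bar r$, since their columns are negatives of each other. Hence $R''$ picks exactly one of $r,\bar r$ for each $r\in R'$.

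If $R''\subseteq R'$, then $R''=R'$ and we are done. So suppose some $\bar r_1\in R''\cap\bar R'$. Let $\tilde\child$ be the unique CS of $(X',R'')$. I would now rerun the chain argument from the proof of Thm.~\ref{lem:reversibilityfull}, which did not use that the second core differs from $(X',R')$.

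The steps of that argument are as follows.
\begin{itemize}
\item Each $\bar r\in\bar R'$ has the single product $\kappa^{-1}(r)$ in $X'$, because $r$ has the unique reactant $\kappa^{-1}(r)$ in $X'$.
\item That product is the unique reactant in $X'$ of $\tilde\kappa(\kappa^{-1}(r))$.
\item In $R'$, $\kappa^{-1}(r)$ is a reactant only of $r$, and $r\notin R''$. So $\tilde\kappa(\kappa^{-1}(r))$ must again lie in $\bar R'$.
\item Iterating shows that the reactions reachable this way are all reverse reactions and close up into an induced elementary circuit $C$ whose reactions lie in $\bar R'$.
\end{itemize}

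The irreducibility from Lemma~\ref{lem:Blokhuis4} should force this circuit to be all of $(X',R'')$: the set of reverse reactions in $R''$ reached by the chain is closed under the out-edges of $\king(\tilde\child)$. Strong connectivity of the induced fluffle, by Cor.~\ref{cor:ac-inducedfluffle}, then gives $R''=\bar R'$ and $\king[X'\cup R'']=C$.

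Consequently $\king[X'\cup R']$ is the reverse circuit $\bar C$, and $(X',R')$ is an autocatalytic induced circuit. The determinant computation from the proof of Thm.~\ref{lem:reversibilityfull} applies here: with $n=|X'|$ and Cor.~\ref{cor:weaklyinduced},
\[
(-1)^{n-1}\det\SM[\tilde\child]=\stout(C)-\stin(C)=\stin(\bar C)-\stout(\bar C)<0 .
\]
So $C$ is drainable, which contradicts $(X',R'')$ being autocatalytic. Hence $R''\cap\bar R'=\emptyset$ and $R''=R'$.

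The step I expect to be the main obstacle is the closure claim: that once one reverse reaction lies in $R''$, all reactions of $R''$ are reverse reactions. The chain argument only follows $\tilde\kappa$ forward from products of reverse reactions. I would justify closure through the strong connectivity of $\king(\tilde\child)$, checking that no forward reaction $r\in R'\cap R''$ can have a reactant that is the product of a reverse reaction. Such a reactant would be $\kappa^{-1}(r')$ with $\bar r'\in R''$, so $r'\notin R''$; but $\kappa^{-1}(r')$ is a reactant in $R'$ only of $r'$, so it cannot be a reactant of a different forward reaction $r\neq r'$.
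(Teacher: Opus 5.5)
Your proof is correct and follows the paper's route. The corollary is exactly the case $\tilde X'=X'$ of the proof of Thm.~\ref{lem:reversibilityfull}: the chain argument forces $R''=\bar R'$ with $\king[X'\cup R'']$ an induced elementary circuit $C$, and the sign of $\stout(C)-\stin(C)$ is then the reverse of that for $\bar C$. Your appeal to strong connectivity of $\king(\tilde\child)$, to show that the $\tilde\kappa$-chain from one reverse reaction exhausts $R''$, usefully makes explicit the step the paper compresses into ``repeating the argument we observe that $\tilde R'\subseteq\bar R'$''.
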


Drainable cycles within the autocatalytic core $(X',R')$, on the other
hand, may give rise to autocatalytic cores $(X'',R'')$ on a strict
subset of the reverse reactions, i.e., $X''\subsetneq X'$ and
$R''\subsetneq\bar{R}'$. To see this, consider the following examples:
\medskip
\par\noindent
\begin{minipage}[c]{0.45\textwidth}
\smallskip\par\noindent
\centering
  \begin{equation*}\label{eq:revSys1}
    \begin{split}
      2x_1 & \underset{r_1}{\rightarrow} x_2+ x_3+ x_4 \\[0.5em]
      2x_2 & \underset{r_2}{\rightarrow} x_1 + x_2 \\[0.5em]
      x_3 & \underset{r_3}{\rightarrow} x_1 \\[0.5em]
      x_4 & \underset{r_4}{\rightarrow} x_1 \\[0.5em]
    \end{split}
  \end{equation*}
\end{minipage}%
\hfill%
\begin{minipage}[c]{0.5\textwidth}
\smallskip\par\noindent
\centering
  \begin{equation}
    \SM[\child] = \begin{pmatrix}
      -2 & 1 & 1 & 1\\
      1 & -1 & 0 & 0\\
      1 & 0 & -1 & 0\\
      1 & 0 & 0 & -1\\
    \end{pmatrix}
  \end{equation}
\end{minipage}
\smallskip\par\noindent
\smallskip\par\noindent with CS $\child=(\{x_i \ \vert \ i=1,\ldots, 4\}, \{r_i
\ \vert \ i=1,\ldots, 4\}, \kappa), \kappa(x_i)\coloneqq r_i,
i=1,\ldots, 4$. 

\smallskip\par\noindent Then the reversible extension contains three
additional autocatalytic cores: the explicit autocatalytic reaction $x_2
\underset{\bar{r_2}}{\rightarrow} 2x_2$ as well as
$\tilde{\child}_{3/4}\coloneqq (\{x_1,x_{3/4}\},
\{\bar{r}_1,\bar{r}_{3/4}\}, \tilde{\kappa}_{3/4})$ with
$\tilde{\kappa}_{3/4}(x_1)=\bar{r}_{3/4}$ and
$\tilde{\kappa}_{3/4}(x_{3/4})=\bar{r}_1$, which both yield the following
autocatalytic CS matrix and core:
\begin{equation}
  \SM[\child_{3/4}] = \begin{pmatrix}
    -1 & 2 \\
    1 & -1 \\
  \end{pmatrix}.
\end{equation}
Note that all three additional cores in the reversible extension are indeed
drainable circuits in the autocatalytic core $(X',R')$, as stated in
Thm.~\ref{lem:reversibilityfull}. Moreover, explicit catalysis only occurs
in the case of a digon.  We formalize this last observation in the next
corollary.
\begin{corollary}\label{cor:DigOrFree}
  Let $(X',R')$ be an autocatalytic core and let $(X'',R'')$ be an
  autocatalytic core in the reversible extension $(X',R'\cup\bar{R}')$ with
  $X''\subsetneq X'$ (and thus $R''\subsetneq\bar{R}')$. Then $(X'',R'')$
  is either a single autocatalytic reaction or it does not contain an
  explicit catalyzed reaction $\bar{r}\in R''$.
\end{corollary}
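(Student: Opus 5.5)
The plan is to reuse the structural description of $(X'',R'')$ obtained in the proof of Thm.~\ref{lem:reversibilityfull}. Put $\tilde{\child}=(X'',R'',\tilde\kappa)$. That proof shows that $R''\subseteq\bar{R}'$. It also shows that each $\bar r\in R''$ has exactly one product in $X'$, namely $\kappa^{-1}(r)$, because $r$ has the unique reactant $\kappa^{-1}(r)$ in $X'$ by Lemma~\ref{lem:square+kappa}. Finally, $\king[X''\cupdot R'']$ is an induced elementary circuit whenever $|X''|\ge 2$. The claim then amounts to showing that no $\bar r\in R''$ has a catalyst in $X''$, except in the single-reaction case.

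We split on $|X''|$. If $|X''|=|R''|=1$, then $(X'',R'')$ is a single autocatalytic reaction by Remark~\ref{rmk:singlereaction}, and there is nothing to prove. So assume $|X''|\ge 2$ and suppose, for contradiction, that some $\bar r\in R''$ is explicitly catalyzed by $z\in X''$, meaning $s^-_{z\bar r}>0$ and $s^+_{z\bar r}>0$. Since $\bar r$ has exactly one product in $X'$, we must have $z=\kappa^{-1}(r)$. Moreover $s^-_{z\bar r}=s^+_{zr}>0$, so $z$ is also a product of $r$. Hence $r=\kappa(z)$ carries the digon $(z,\kappa(z),z)$ in $\king(\child)$. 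This matches the remark in the proof of Thm.~\ref{lem:reversibilityfull} that explicit catalysis in an induced fluffle only occurs as such digons.

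It remains to derive the contradiction. Because $z$ is a reactant of $\bar r$ and $\king[X''\cupdot R'']$ is an induced fluffle (Cor.~\ref{cor:ac-inducedfluffle}), in which every reaction has indegree $1$, $z$ must be the unique reactant of $\bar r$ in $X''$. Its unique product in $X''$ is also $z$. The vertex $\bar r$ therefore has in- and out-neighbour $z$ only, so $(z,\bar r,z)$ is a closed digon inside the strongly connected induced fluffle. With indegree and outdegree $1$ at $\bar r$, strong connectivity together with $|X''|\ge2$ forces $\bar r$ to lie on the elementary circuit through other vertices, which is impossible. Equivalently, the circuit would have to be just the digon $(z,\bar r)$, so $|X''|=1$, contradicting our assumption. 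This gives the dichotomy.

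The main obstacle is justifying carefully that $\king[X''\cupdot R'']$ has indegree $1$ at every reaction vertex even when catalysis is present. This is where Lemma~\ref{lem:square+kappa} is applied to $(X'',R'')$: the unique reactant of each reaction in $X''$ is $\tilde\kappa^{-1}(\bar r)$, so a catalyst $z$ must coincide with $\tilde\kappa^{-1}(\bar r)$. That in turn makes $\tilde{\SM}_{zz}=s^+_{z\bar r}-s^-_{z\bar r}$, and $\bar r$ has no other product in $X''$. Consequently the column of $\bar r$ in $\SM[\tilde\child]$ has no positive off-diagonal entry. This contradicts the irreducibility of $\SM[\tilde\child]$ (Lemma~\ref{lem:Blokhuis4}) when $|X''|\ge 2$. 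I expect this matrix formulation to be the cleanest way to close the argument.
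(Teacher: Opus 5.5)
Your proof is correct and follows essentially the paper's argument. You force the catalyst $z$ to be both the unique reactant $\tilde\kappa^{-1}(\bar r)$ and the unique product $\kappa^{-1}(r)$ of $\bar r$, so for $|X''|\ge 2$ the digon $(z,\bar r,z)$ is a trap incompatible with strong connectivity; the only difference is cosmetic, since the paper follows a path out of $z$ to contradict uniqueness of the child reaction (Lemma~\ref{lem:square+kappa}), while you invoke irreducibility of $\SM[\tilde\child]$ (Lemma~\ref{lem:Blokhuis4}) via the vanishing off-diagonal part of column $z$.
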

\begin{proof}
  Assume that $\bar{r}\in R''$ is an explicit catalyzed reaction. Then
  there is $x\in X''$ such that $\bar{r} =\tilde{\kappa}(x)$, i.e., $x$ is
  the unique reactant of $\bar{r}$, and $(x,\bar{r},x)$ is a digon in
  $\king(\tilde{\child})$. However, $x=\kappa^{-1}(r)$ is also the unique
  product of $\bar{r}$. If $|X''|=|R''|=1$ then $\tilde{\mathcal{A}}$ is an
  autocatalytic reaction. Otherwise, $|X''|=|R''|\geq 2$. Thus, there is a
  reaction $\bar{r}^*$ such that there is a path $(x,\bar{r}_1, x_1,
  \ldots, \bar{r}^*)$. However, $\bar{r}$ has only one product, i.e., $x$,
  hence $\bar{r}_1\neq \bar{r}$. Thus, $x$ is a reactant for another
  reaction in $R''$, which by Lemma~\ref{lem:square+kappa} contradicts that
  the assumption that $(X'',R'')$ is an autocatalytic core.
\end{proof}

If all reactions are considered to be reversible, the definition of an
autocatalytic core would not include autocatalytic cores $(X',R')$ that
contain a smaller autocatalytic core in $(X',\bar{R}')$ because one would
identify $R'$ and $\bar{R}'$. In such a setting, for example,
\cite{Blokhuis:20} provided a complete classification of autocatalytic cores
in five types, which is then not valid in the presence of irreversible
reactions. It may be useful, therefore, to distinguish autocatalytic cores
that do not contain any other core in its reversible extension.  
\begin{definition}
  An autocatalytic core (CS-core) is \emph{hard} if its reversible
  extension does not contain another autocatalytic core.
\end{definition}
In particular, therefore, Thm.~\ref{lem:reversibilityfull} states:
\begin{corollary}
  An autocatalytic core is hard if and only if it does not contain a
  drainable elementary circuit.
\end{corollary}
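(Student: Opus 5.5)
The plan is to prove the two directions separately, relying on Thm.~\ref{lem:reversibilityfull} for one and on a converse construction for the other. Throughout, $(X',R')$ is an autocatalytic core with unique CS $\child$ from Cor.~\ref{cor:acore-uniqueCS}. By Cor.~\ref{cor:ac-inducedfluffle}, $\king(\child)=\king[X'\cupdot R']$ is an induced fluffle, and $\SM[\child]$ is an irreducible Metzler matrix.

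\emph{``Not hard $\Rightarrow$ contains a drainable circuit.''} This is exactly the contrapositive reading of Thm.~\ref{lem:reversibilityfull}. If the core is not hard, then its reversible extension $(X',R'\cup\bar R')$ contains an autocatalytic core other than $(X',R')$. Hence it contains more than one core, and the theorem yields a drainable elementary circuit in $(X',R')$. The single-reaction case $|X'|=1$ was already shown in that proof to be hard. It is also consistent with the other direction: the only circuit is the digon $(x,r,x)$, with $\stout=s^+_{xr}-s^-_{xr}>0>\stin$, so it is not drainable.

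\emph{``Contains a drainable circuit $\Rightarrow$ not hard.''} Let $C=(x_1,r_1,\dots,x_n,r_n,x_1)$ be a drainable elementary circuit in $\king[X'\cupdot R']$. I will show that its reverse $\bar C=(x_1,\bar r_n,x_n,\dots,\bar r_1,x_1)$ yields an autocatalytic core in the reversible extension.

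First I will treat the case where $C$ is a digon $(x,\kappa(x),x)$. Drainable means $s^+_{xr}-s^-_{xr}<s^-_{xr}-s^+_{xr}$, i.e.\ $s^+_{xr}<s^-_{xr}$. Then $\bar r$ satisfies $s^+_{x\bar r}>s^-_{x\bar r}>0$, so $(\{x\},\{\bar r\})$ is an autocatalytic core by Remark~\ref{rmk:singlereaction}. It differs from $(X',R')$ because $\bar r\notin R'$, since $\SM'$ is invertible.

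For $n\ge 2$, each reactant of $\bar r_i$ in $X'$ is a product of $r_i$, and the only product of $\bar r_i$ in $X'$ is $\kappa^{-1}(r_i)=x_i$ by Lemma~\ref{lem:square+kappa}. The CS $\tilde\child$ on $(V(C)\cap X', \bar R(C))$ given by $\tilde\kappa(x_{i+1})=\bar r_i$ is therefore valid. Its Metzler part, however, may contain extra entries from other products of $r_i$ lying in $V(C)$, i.e.\ chords of $C$. So I will not use $C$ itself, but pass to a \emph{chordless} drainable circuit.

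The main obstacle is choosing $C$ minimal. I plan to pick a drainable circuit with the fewest vertices and argue that its reverse is weakly induced in the reversible extension. A chord $(r_i,x_j)$ of $C$ splits $C$ into two shorter circuits through the fluffle structure, since every $x$ has a unique out-edge. I then need a multiplicative comparison showing that if both pieces were non-drainable, $C$ would not be drainable either; this is the delicate step, and I expect to use the sign of $\det$ via Lemma~\ref{lem:descartes}-type arguments on the $2\times2$ cycle structure.

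Once $\bar C$ is weakly induced, Cor.~\ref{cor:weaklyinduced} applies. Using $\stout(\bar C)=\stin(C)$ and $\stin(\bar C)=\stout(C)$, drainability of $C$ gives $\stout(\bar C)>\stin(\bar C)$, so $\bar C$ is an autocatalytic core. It is different from $(X',R')$, because it uses reactions in $\bar R'$, which do not lie in $R'$. Hence the core is not hard.
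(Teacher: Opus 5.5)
Your first half (not hard $\Rightarrow$ drainable circuit) is exactly Thm.~\ref{lem:reversibilityfull}, and that is all the paper itself offers for this corollary. Your second half cannot be completed, because the implication it targets is false.

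The step you flagged as delicate is where it breaks. In a directed circuit, an RM-chord $(r_i,x_j)$ produces only \emph{one} shorter circuit, $C'=(x_j,r_j,\dots,x_i,r_i,x_j)$. The chord coefficient $s^+_{x_jr_i}$ enters neither $\stout(C)$ nor $\stin(C)$, so drainability of $C$ says nothing about $C'$. Meanwhile the chord makes $x_j$ an additional reactant of $\bar r_i$. That is an extra \emph{negative} entry in the reversed CS matrix, not a Metzler entry, and it can destroy semipositivity even though the Metzler part satisfies the determinant condition.

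Concretely, let $r_1: 2x_1\to x_2+4x_3$, $r_2: 2x_2\to x_3$, $r_3: 2x_3\to x_1$. Then
\[
\SM[\child]=\begin{pmatrix}-2&0&1\\ 1&-2&0\\ 4&1&-2\end{pmatrix},\qquad \SM[\child]\begin{pmatrix}1\\ 0.4\\ 2.1\end{pmatrix}=\begin{pmatrix}0.1\\ 0.2\\ 0.2\end{pmatrix}.
\]
Every proper sub-CS except the one on $\{x_1,x_3\}$ has a row with no positive entry. That one has matrix $\begin{pmatrix}-2&1\\ 4&-2\end{pmatrix}$ with eigenvalues $0,-4$, so it is not semipositive. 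By Prop.~\ref{prop:irMHu=sp} and Thm.~\ref{thm:main}, this is therefore an autocatalytic core.

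The circuit $C=(x_1,r_1,x_2,r_2,x_3,r_3,x_1)$ has $\stout(C)=1<8=\stin(C)$, so it is drainable. The only other circuit, $(x_1,r_1,x_3,r_3,x_1)$, is neutral ($4=4$).

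By the proof of Thm.~\ref{lem:reversibilityfull}, any other core in the reversible extension uses only the reactions $\bar r_1: x_2+4x_3\to 2x_1$, $\bar r_2: x_3\to 2x_2$, $\bar r_3: x_1\to 2x_3$. Each $x_k$ has a positive net coefficient only in $\bar r_k$. Well-formedness and row positivity therefore force $\bar r_1,\bar r_3$ and $x_1,x_3$ to be present. Rows $x_1$ and $x_3$ then demand $v_{\bar r_3}<2v_{\bar r_1}$ and $2v_{\bar r_3}>4v_{\bar r_1}$, which is impossible. So this core is hard yet contains a drainable circuit, and the ``only if'' direction of the corollary is false as stated.

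What does hold, and what the proof of the theorem actually delivers, is a restricted version. A core is non-hard if and only if it contains a drainable circuit $D$ in which each reaction has exactly one product in $V(D)$, namely its successor on $D$. This covers drainable digons and RM-chord-free circuits. A drainable circuit with a back edge $(r_i,x_i)$ always yields a drainable digon, since drainability forces $s^-_{x_ir_i}>s^+_{x_ir_i}$.

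For such $D$ your reversal argument is correct. The reversed CS matrix is an irreducible single-cycle Metzler matrix with $(-1)^{n-1}\det=\stin(D)-\stout(D)>0$, hence semipositive. The paper's one-line deduction of the full equivalence from the theorem has the same hole as your second half.
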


\paragraph{Minimal Autocatalytic Subsystems (MAS).} MAS were introduced in
\cite{Gagrani:24}, and they are defined with yet another version of minimality. Let
us write $X(R')\subseteq X$ for the set of entities that appear as
reactants or products in the set of reactions. Thus $X(R')$ correspond to
the non-zero rows in submatrix of $\SM$ formed by columns in $R'$.
\begin{definition} \cite{Gagrani:24}
  A sub-RN $(X(R'),R')$ is a \emph{minimal autocatalytic subsystem} (MAS)
  if $(X',R')$ is autocatalytic for some $X'\subseteq X(R')$ and
  there is no proper subset $R''\subsetneq R'$ such that $(X'', R'')$ is autocatalytic.
\end{definition}

An MAS contains at least one autocatalytic core. However, there may be
  autocatalytic cores $(X',R')$ and $(X'',R')$ with the same set $R'$ of
  reactions and $X'\ne X''$. As an example consider
\begin{equation}
  \begin{split}
    x_1  	  &\underset{r_1}{\longrightarrow}\quad x_2 + x_3\\
    x_2 + x_3     &\underset{r_2}{\longrightarrow}\quad 2x_1.\\
  \end{split}
\end{equation}
In particular, $R'\coloneqq \{r_1,r_2\}$ and $X(R')\coloneqq
\{x_1,x_2,x_3\}$ gives rise to two autocatalytic cores, encoded by the CS
$\child_{2/3}\coloneqq (\{x_1, x_{2/3}\}, \{r_1,r_2\}, \kappa_{2/3})$ with
$\kappa_{2/3}(x_1)\coloneqq r_1$ and $\kappa_{2/3}(x_{2/3})\coloneqq r_2$.
Minimality w.r.t.\ the number of reactions implies that an MAS does not
contain more reactions than an autocatalytic core. This relationship
remains unchanged if explicit catalysts are allowed.

\begin{figure}
  \centering
  \begin{tikzpicture}
    \node[draw, ellipse, minimum width = 2.0cm, minimum height = 1.1cm, label={[font=\footnotesize, align = center, yshift=-10pt] center: {$\mathcal{M}$}}] (a) at (0,-0.5) {};
    \node[draw, ellipse, minimum width = 3.5cm, minimum height = 2.6cm, label={[font=\footnotesize, align=center, yshift=20pt] 
	center:{Autocatalytic cores}}] (b) at (0,0.25) {};
    \node[draw, ellipse, minimum width = 2.5cm, minimum height = 1.25cm, label={[font=\footnotesize, align=center, yshift=10] 
	center:{Hard cores}}] (c) at (0,0) {};
    \node[draw, ellipse, minimum width = 5cm, minimum height = 3.6cm,
      line width=1.4pt,
      label={[font=\footnotesize, align=center, yshift=32pt] center:{Autocatalytic CS cores}}] (d) at (0,0.75) {};
  \end{tikzpicture}
  
  \caption{Summary of the subset relationship between different notions of
    minimal autocatalytic systems with or without explicit
    catalysis. $\mathcal{M}$ denotes the set of autocatalytic cores induced
    by MAS (minimal autocatalytic subsystems) \emph{sensu}
    \cite{Gagrani:24}.}
    \label{fig:CoreRelWithoutCatalysis}
\end{figure}
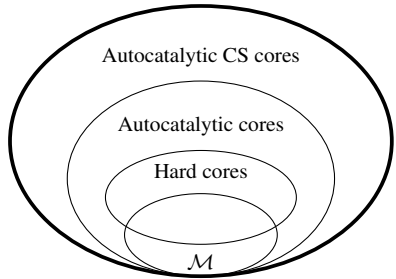

As expected, an MAS contains both hard and non-hard cores. For example, in
the system:
\begin{equation}
  \begin{split}
    2x_1 & \underset{r_1}{\rightarrow} x_2 + x_3+ x_4 \\[0.5em]
    x_2 & \underset{r_2}{\rightarrow} x_1 \\[0.5em]
    x_3 & \underset{r_3}{\rightarrow} x_1 \\[0.5em]
    x_4 & \underset{r_4}{\rightarrow} x_1 \\[0.5em]
  \end{split}
\end{equation}
with $R'=\{r_i \ | \ i=1,\ldots, 4\}$ and $X(R')=\{x_i \ | \ i=1,\ldots,
4\}$, $(X(R'),R')$ is an autocatalytic core, but not a hard core, because
$\child=(\{x_1,x_2\}, \{\bar{r_1}, \bar{r_2}\}, \kappa)$, with
$\kappa(x_1)= \bar{r_2}, \kappa(x_2)= \bar{r_1}$ is an autocatalytic core,
and in particular a hard core, in the \emph{reversible extension} of
$(X(R'), R')$. The MAS $(X(\{\bar{r_1}, \bar{r_2}\}), \{\bar{r_1},
\bar{r_2}\})$, thus contains only one autocatalytic core, i.e., a hard one.

In general, a set of reactions may contain more autocatalytic cores than
MAS, as shown by the following example taken from the Appendix of
\cite{Golnik:25z}:
\begin{equation}
    \begin{split}
      x_1 + x_3 &\underset{r_1}{\longrightarrow}\quad x_2 + x_4\\
      x_2       &\underset{r_2}{\longrightarrow}\quad 2x_1 + x_3\\
      x_4       &\underset{r_3}{\longrightarrow}\quad x_3\\
    \end{split}
\end{equation}

It contains two sub-RNs, $(X',R')=(\{x_1,x_2\},\{r_1,r_2\})$ with CS
$\kappa'(x_1)=r_1$, $\kappa'(x_2)=r_2$ and
$(X'',R'')=(\{x_2,x_3,x_4\},\{r_1,r_2,r_3\}$ with $\kappa(x_2)=r_2$,
$\kappa(x_3)=r_1$ and $\kappa(x_4)=r_3$ both of which are autocatalytic.
Both $(X',R')$ and $(X'',R'')$ are autocatalytic cores since
$X'\not\subseteq X''$. However, since $R'\subsetneq R''$, only $(X(R'),R')$
forms an MAS. The set $\mathcal{M}$ of autocatalytic cores ``induced'' by
the set of all MAS in a given RN, therefore, is a subset of the set of all
autocatalytic cores. In addition, since $(X'',R'')$ is a hard core, not all
hard cores are induced by an MAS.  This discussion is summarized in
Fig.~\ref{fig:CoreRelWithoutCatalysis}.

\section{Reaction Networks with Unit Stoichiometry}

Reaction networks that model cell signalling by means of protein
phosphorylation typically do not use stoichiometric coefficients other than
$0$ and $1$. Since they are also easier to analyze than general reaction
networks, they have received considerable attention in the literature, see
\cite{Jiao:25} and references therein. Unit stoichiometry is also
frequently observed in metabolic pathways, including glycolysis,
pentose-phosphate-pathway, or tricarboxylic acid cycle. 

\begin{definition}
  An RN $(X,R)$ has \emph{unit stoichiometry} if $s^{\pm}_{xr}\in\{0,1\}$.
\end{definition}
It follows immediately that $\mathbf{S}_{xr}\in\{-1,0,+1\}$. We again allow
explicit catalysis. Moreover, we note that the results in this section also
pertain to sub-RNs $(X',R')$ with unit stoichiometry that may be embedded
in large networks with unrestricted stoichiometry. We first note that
handling explicit catalysis becomes particularly simple in autocatalytic
cores with unit stoichiometry:

\begin{lemma}\label{lem:contrelemLeibniz}
  Let $(X',R')$ be an autocatalytic core with CS-matrix $\SM[\child]$ in a
  RN with unit-stoichiometry. Then one of the
  following alternatives hold true.
  \begin{enumerate}[(i)]
  \item All diagonal entries of $\SM[\child]$ are nonzero and there 
    are at least two contributing circuits $C_1$ and $C_2$ with a common
    vertex.
  \item The diagonal of $\SM[\child]$ contains a zero entry and there is at
    least one contributing elementary circuit $C^*$ that contains vertices
    $x\in X'$ with $\SM[\child]_{xx}= 0$.
    \end{enumerate}
\end{lemma}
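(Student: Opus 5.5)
We first note that $|X'|\ge 2$: with unit stoichiometry a single reaction with $s^+_{xr}=s^-_{xr}=1$ has net stoichiometry $0$, so by Remark~\ref{rmk:singlereaction} no autocatalytic core with $|X'|=1$ exists. Hence Cor.~\ref{cor:diagonal} and Lemma~\ref{lem:Blokhuis4} apply: $\mathbf{A}\coloneqq\SM[\child]$ is an irreducible Metzler matrix. By unit stoichiometry its diagonal entries lie in $\{-1,0\}$ and its off-diagonal entries lie in $\{0,1\}$.

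Let $D$ be the digraph on $X'$ with an arc $y\to x$ whenever $x\ne y$ and $\mathbf{A}_{xy}=1$, i.e.\ $x$ is a net product of $\kappa(y)$. Irreducibility of $\mathbf{A}$, which is a statement about off-diagonal entries only, means that $D$ is strongly connected. Its elementary cycles of length at least two are exactly the contributing permutation cycles of $\Metzler{\mathbf{A}}=\mathbf{A}$. By Lemma~\ref{lem:elemcircuitscont} and Def.~\ref{def:contributingcircuits}, each such cycle lifts to a contributing elementary circuit in $\king(\child)$ through the same entity vertices. All statements about circuits will therefore be proved in $D$ and then transferred.

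Case (ii), where some $\mathbf{A}_{xx}=0$, is the easy alternative. Since $D$ is strongly connected on $|X'|\ge2$ vertices, $x$ has an out-neighbour $z$. Concatenating the arc $x\to z$ with a shortest $z$–$x$ path gives an elementary cycle through $x$ of length at least two. This cycle is a contributing circuit $C^*$ containing $x$.

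Case (i), where all diagonal entries equal $-1$, is the main step. Write $\mathbf{A}=-\mathbf{I}+\mathbf{B}$ with $\mathbf{B}$ the $0/1$ adjacency matrix of $D$. Semipositivity provides $\mathbf{v}\gg0$ with $\mathbf{A}\mathbf{v}\gg0$, i.e.\ $\mathbf{B}\mathbf{v}\gg\mathbf{v}$. By the Collatz--Wielandt bound (equivalently, Prop.~\ref{prop:irMHu=sp} applied to $\mathbf{A}$), this gives $\rho(\mathbf{B})>1$.

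We then argue by contradiction that $D$ is not a single elementary cycle. If it were, $D$ would have to be Hamiltonian because it is strongly connected, so $\mathbf{B}$ would be a cyclic permutation matrix with $\rho(\mathbf{B})=1$, which is impossible.

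Now take any elementary cycle $C_1$ of $D$; one exists by strong connectivity. There are two sub-cases.
\begin{itemize}
\item If $V(C_1)\ne X'$, choose an arc leaving $V(C_1)$ and a shortest path returning to $V(C_1)$. Together with the appropriate segment of $C_1$, this ear closes an elementary cycle $C_2\ne C_1$ that shares a vertex with $C_1$.
\item If $V(C_1)=X'$, then $D\ne C_1$ forces a chord $u\to w$ of $C_1$. The chord together with the $C_1$-path from $w$ back to $u$ is an elementary cycle $C_2\ne C_1$ sharing vertices with $C_1$.
\end{itemize}
Both cycles have length at least two, so lifting them gives the contributing circuits $C_1,C_2$ required in (i).

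The step I expect to need the most care is the correspondence between cycles of $D$ and contributing circuits of $\king(\child)$. It must be checked that arcs of $D$ are exactly the RM-edges with $s^+>s^-$, so that condition (ii) of Lemma~\ref{lem:elemcircuitscont} holds. The MR-edges are the matching edges $(x,\kappa(x))$, so each cycle of $D$ determines a unique circuit in $\king(\child)$.
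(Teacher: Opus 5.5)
Your argument for alternative (i) is correct and takes a genuinely different route from the paper. The paper starts from $\sgn\det\SM[\child]=(-1)^{k-1}$ (Lemma~\ref{lem:descartes}) and expands the Leibniz formula. With diagonal $-1$ and $0/1$ off-diagonal entries, every nonzero term equals $(-1)^{k-c(\pi)}$. So if there are no contributing cycles, exactly one, or only pairwise vertex-disjoint ones, then $\det\SM[\child]$ is $(-1)^k$ or $0$ by cancellation, a contradiction. You use semipositivity only through $\mathbf{B}\mathbf{v}\gg\mathbf{v}\Rightarrow\rho(\mathbf{B})>1$, which excludes $D$ being a bare Hamiltonian cycle. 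Strong connectivity of $D$ then produces a second cycle meeting the first via an ear or a chord. Your route avoids Lemma~\ref{lem:descartes} and the sign bookkeeping, and it shows that the paper's vertex-disjoint subcase cannot occur once irreducibility is used. The paper's route, in exchange, stays inside the determinant calculus that Lemma~\ref{lem:unitdet} and Thm.~\ref{thm:generation} build on.

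For (ii) you prove less than the paper intends. The phrase ``contains vertices $x$ with $\SM[\child]_{xx}=0$'' is ambiguous. However, the paper's proof and Case (ii) of Thm.~\ref{thm:generation} require a single $C^*$ containing \emph{all} such $x$. You build your cycle through one chosen $x$ from an arbitrary out-neighbour and a shortest return path. Nothing in that construction forces it through the other zero-diagonal vertices, and strong connectivity alone cannot supply this.

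The missing ingredient is minimality of the core. Let $C=(y_1,\dots,y_m)$, $m\ge 2$, be any cycle of $D$ with $\mathbf{A}_{y_1y_1}=0$, and choose $w_{y_1}>w_{y_2}>\dots>w_{y_m}>0$. All off-diagonal entries are nonnegative and all diagonal entries are at least $-1$. Hence for $i=1,\dots,m-1$, row $y_{i+1}$ of $\mathbf{A}[V(C)]\mathbf{w}$ is at least $w_{y_i}-w_{y_{i+1}}>0$, and row $y_1$ is at least $w_{y_m}>0$. So the sub-RN $(V(C),\kappa(V(C)))$ has a semipositive stoichiometric matrix. It is also well formed, because $\kappa(y_i)$ has reactant $y_i$ and product $y_{i+1}$. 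Minimality of the core therefore forces $V(C)=X'$, so your cycle is Hamiltonian and contains every zero-diagonal vertex.

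Adding this paragraph completes your proof. It also repairs the paper's own Leibniz sketch for (ii), which overlooks that several disjoint contributing cycles could, a priori, jointly cover the zero-diagonal vertices and give a nonzero term.
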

\begin{proof}
  By Lemma~\ref{lem:descartes} we have $\sgn\det\SM[\child]=(-1)^{n-1}$. In
  case (i), we consider the following alternatives: Assume first that there
  are no contributing elementary circuits; then the only contributing
  permutation to the Leibniz formula is the identity one, and thus
  $\sgn\det\SM[\child]=(-1)^k$, a contradiction. If there is only a single
  cycle, then the Leibniz formula yields
  $\det\SM[\child]=(-1)^{k-1}+(-1)^k=0$, again a contradiction.  Finally,
  assume that no two contributing elementary circuits share a vertex and
  that there are at least $n\ge 2$ elementary circuits. For the symmetric
  group $\mathsf{S}_k$ we denote with {$\mathfrak{C}$} the set of all
  contributing permutations, i.e., all $\pi$ for which $\prod_{i=1}^k
  \SM[\child]_{i,\pi(i)}\neq 0$. Each $\pi\in\mathfrak{C}$ is a product of
  $c(\pi)$ contributing cycles $\chi$ with length $\ell(\chi)$. With this
  notation, we compute
  \begin{equation}
    \begin{split}
      \det\SM[\child]
      &=\sum_{\pi\in\mathsf{S}_k} \sgn(\pi)
      \prod_{j=1}^{k}\SM[\child]_{j, \pi(j)}=
      \sum_{\pi\in\mathfrak{C}} \sgn(\pi)
      \prod_{j=1}^{k}\SM[\child]_{j, \pi(j)}\\[0.5em]
      & = \sum_{\pi\in\mathfrak{C}}
      \underbrace{(-1)^{\sum_{\chi \in \pi}(\ell(\chi)-1)}}_{=\sgn(\pi)}
      \underbrace{(-1)^{k-\sum_{\chi\in\mathfrak{C}}\ell(\chi)} \cdot 1}_{\prod_{j=1}^{k} \SM[\child]_{j, \pi(j)}} \\[0.5em]
      &= (-1)^{k} \sum_{\pi\in\mathfrak{C}} (-1)^{-c(\pi)}
      = (-1)^{k} \sum_{j=0}^{N} (-1)^j = 0. 
    \end{split}
  \end{equation}
  The penultimate equality follows from the fact that every contributing
  permutation $\pi$ is is a combination of $j$ of the $N$ pairwisely
  non-overlapping contributing cycles. The binomial theorem finally
  ensures that the sum vanishes. Hence we again obtain a contradiction to
  Lemma \ref{lem:descartes}.
  \par\noindent In case (ii), we assume there are no contributing
  elementary circuits that contain all such vertices $x$ with
  $\SM[\child]_{xx}= 0$, then each summand of the Leibniz formula would be
  identically zero, again a contradiction.
\end{proof}

In the following, it will be convenient to consider networks primarily from
the perspective of not necessarily induced subgraphs of $\king$ rather than
as sub-matrices of the stoichiometric matrix. To this end, we will need the
following notion:
\begin{definition}[Spanning fluffle]
A fluffle $G=(V_1\cup V_2, E_1\cup E_2)$ is a \emph{spanning fluffle} of a
subnetwork $(X',R')$ if $V_1(G)=X'$ and $V_2(G)=R'$.
\end{definition}
Moreover, for a child-selection $\pmb{\kappa}$ and a not necessarily
spanning subgraph $G$ of $\king(\child)$ such that $E_1(G)=E_1(\child)\cap
X(G)\times R(G)$ we define the \emph{fluffle-part} matrix $\mathbf{A}(G)$
\begin{equation}    
\mathbf{A}(G)_{ij} = \begin{cases}
\SM[\child]_{ij}\quad&\text{if $i=j$ or $i\neq j$ and $(\kappa(x_i),x_j)\in {E_2(G)}$}\\
0 \quad &\text{otherwise}.
\end{cases}
\end{equation}

In \cite{Golnik:25z} we briefly discussed an interesting structural
dichotomy distinguishing two types of autocatalytic sub-RNs, depending on
whether or not they can be interpreted as an amplification mechanism for a
subset of ``central'' species. It seems useful to investigate the
corresponding fluffles without restricting them to autocatalytic sub-RNs.
\begin{definition}[Centralized fluffle]
  A fluffle $G$ is \emph{centralized} if there is a \emph{center} $x^*\in
  X'$ such that all elementary circuits of length at least $4$ in $G$ pass
  through $x^*$.
\end{definition}
Since circuits in bipartite graphs are always of even length, the
  minimal length requirement simply excludes digons, i.e., elementary
  circuits arising from explicit catalysis. The reason for disregarding
  these digons is that only the net stoichiometry matters for contributing
  permutation cycles.

Then, it is not difficult to see that this definition is consistent with the definition of
centralized child-selections in \cite{Golnik:25z}.   Centers are
feedback vertex sets of size of $1$. Graphs with centers are ``almost
acyclic'' since deleting a center obviously leaves a directed acyclic
graph. Moreover, if there is more than one center, then all centers lie on
a common elementary circuit.  Digraphs with a center have appeared, e.g.,
in \cite{Kosaraju:77,Thomassen:85}. Digraphs with centers can be
constructed by ear decompositions with the property that every ``return
path'' from the terminal to initial vertex of an ear runs through all
centers.  If initial and terminal vertices of ears are in addition
restricted to reaction and metabolite vertices, respectively, one obtains a
centralized fluffle.  An induced elementary circuit $C$ in $\king$ is a
special case of a centralized fluffle.

\begin{lemma}
  \label{lem:unitdet} 
  Let $G$ be a centralized fluffle in an RN with unit-stoichiometry. Let $N$
  be the number of contributing cycles that contains all vertices $x\in X'$
  with $\mathbf{A}(G)_{xx}=0$, and let $N'=0$ if $\mathbf{A}(G)$ has a
  non-zero diagonal entry, and $N'=1$ otherwise. Then
  $(-1)^{n-1}\det\mathbf{A}(G) = N-N'$.
\end{lemma}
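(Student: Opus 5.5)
The plan is to expand $\det\mathbf{A}(G)$ with the Leibniz formula \eqref{eq:leibniz} and use the center $x^*$ to control which permutations survive. The structural input is this: every contributing permutation cycle of $\mathbf{A}(G)$ corresponds, via Lemma~\ref{lem:elemcircuitscont}, to an elementary circuit of length at least $4$ in $G$. By centralization, all such circuits pass through $x^*$. Hence any two contributing cycles intersect, at least in $x^*$. Consequently every permutation $\pi$ with $\prod_j\mathbf{A}(G)_{j,\pi(j)}\neq 0$ is either the identity or consists of exactly one contributing cycle $\chi$ together with fixed points.

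Next I evaluate each surviving term under unit stoichiometry. All off-diagonal entries of $\mathbf{A}(G)$ lie in $\{0,1\}$, since $\mathbf{A}(G)$ is a Metzler part. The diagonal entries are $\SM[\child]_{xx}=s^+_{x\kappa(x)}-s^-_{x\kappa(x)}\in\{0,-1\}$, because $s^-_{x\kappa(x)}=1$; the value is $0$ exactly when $x$ is a catalyst of $\kappa(x)$, i.e.\ a digon. Write $Z$ for the set of zero-diagonal vertices and $n=|X'|$.

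The identity term is $\prod_x\mathbf{A}(G)_{xx}$. It equals $(-1)^n$ if $Z=\emptyset$ and $0$ otherwise. So its contribution is $(-1)^n(1-N')$, where $N'$ is the indicator of $Z\neq\emptyset$. (I will read the statement's definition of $N'$ in this sense, since that is what makes the sign bookkeeping consistent.)

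A term consisting of a single contributing cycle $\chi$ of length $\ell$ has sign $(-1)^{\ell-1}$. Its off-diagonal product along $\chi$ is $1$, because contributing means each entry is nonzero, hence equal to $1$. The fixed points outside $\chi$ contribute $(-1)^{n-\ell}$ if none of them lies in $Z$, and $0$ otherwise. So the term is nonzero precisely when $\chi$ contains all of $Z$, and then it equals $(-1)^{\ell-1}(-1)^{n-\ell}=(-1)^{n-1}$. Summing over cycles gives $(-1)^{n-1}N$.

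Adding the two contributions gives $\det\mathbf{A}(G)=(-1)^{n-1}N+(-1)^n(1-N')$. Multiplying by $(-1)^{n-1}$ yields $N-1+N'$.

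This does not match the claimed $N-N'$ unless $N'$ is read as $1$ when the diagonal has no zero entry. Under that convention the identity term contributes $-N'$ after the sign normalization, and the formula $N-N'$ follows. So in the write-up I will state $N'=1$ iff all diagonal entries are nonzero (i.e.\ $Z=\emptyset$), noting that this agrees with case (i) versus case (ii) of Lemma~\ref{lem:contrelemLeibniz}.

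The main obstacle is the first step: justifying that contributing cycles of $\mathbf{A}(G)$ are exactly the images of length-$\ge4$ elementary circuits of $G$. This requires that an off-diagonal nonzero entry $\mathbf{A}(G)_{ij}$ corresponds to an RM-edge of $G$ from $\kappa(x_i)$ to $x_j$, and that this correspondence respects the orientation convention used in \eqref{eq:stout}, so that a permutation cycle traces a directed circuit rather than its reverse. I would check this directly from the definition of $\mathbf{A}(G)$, using that digons only affect the diagonal. Once that is settled, the centralization argument (pairwise intersection of contributing cycles) is immediate.
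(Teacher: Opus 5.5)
Your proposal is correct and is essentially the paper's proof. The steps match: expand by Leibniz; use centralization to leave only the identity and single contributing cycles plus fixed points; each cycle containing all zero-diagonal vertices contributes $(-1)^{n-1}$; and the identity contributes $(-1)^n$ exactly when no diagonal entry vanishes (the orientation worry you flag is harmless, since only a cycle's vertex set matters for passing through $x^*$ and containing the zero-diagonal vertices). Your final convention, $N'=1$ iff all diagonal entries are nonzero, is exactly what the paper's identity $\det\mathbf{A}(G)=(-1)^{n-1}N+(-1)^nN'$ and its use in Theorem~\ref{thm:generation} (the value $2-1$ in the nonzero-diagonal case) require, so the statement's wording of $N'$ is a typo; delete your earlier parenthetical claiming the opposite reading is the consistent one.
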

\begin{proof}
  If $\child$ is centralized in $x^*$ then all contributing circuits run
  through $x^*$ and thus only the corresponding permutation cycles and the
  singleton permutation cycles corresponding to the diagonal elements can
  be non-zero. Thus a permutation $\pi$ yields a non-zero contribution only
  if it is of the form $\pi=\chi\circ(u_{k+1})\circ\dots\circ(u_n)$ where
  $\chi$ is contributing cycle of length $k$ and the remaining permutations
  are singletons. We therefore compute via the Leibniz formula:
  \begin{equation*}
    \begin{split}
      \sgn(\pi)\underbrace{\prod_{x\in X'}\mathbf{A}(G)_{x,\pi(x)}}_{=1, \forall x\in \chi} & =
      \sgn(\chi) \prod_{j= k +1}^n \mathbf{A}(G)_{u_ju_j} =\\
      &=(-1)^{k-1} \begin{cases} (-1)^{n-k} \\ 0 \end{cases} =  
  \begin{cases} (-1)^{n-1} \\ 0 \end{cases}
	  \end{split}
  \end{equation*}
  We obtain a non-zero value if and only if all of the $x\in X'$ that
  appear as singletons, or equivalently, are not contained in the
  permutation cycle have a nonzero (diagonal) entry,
  i.e. $\mathbf{A}(G)_{x,x}\neq 0$, i.e. do not correspond to digons.
  Similarly, the diagonal contribution corresponding to the identity
  permutation, $\prod_{x\in X'}\mathbf{A}(G)_{x,x}$ is either $(-1)^n$ or
  $0$, depending on whether or not there is a zero diagonal entry. Thus we
  have $\Det\mathbf{A}(G) = (-1)^{n-1}N + (-1)^n N'$, and the statement
  follows.
\end{proof}
We remark that Lemma~\ref{lem:unitdet} and its proof remain correct if
``centralized'' is replaced by the weaker condition that any two
contributing circuits share at least one vertex. In this case, no
permutation contains two contributing circuits, and thus the contributions
of the Leibniz formula have the same form as in the proof of
Lemma~\ref{lem:unitdet}. This suggests that fluffles without
vertex-disjoint circuits could be indeed of interest for our
purposes. General digraphs without vertex-disjoint circuits have been
discussed e.g.\ in \cite{Thomassen:87}.

\begin{theorem}\label{thm:generation}
  For every autocatalytic core $(X',R')$ with unit stoichiometry, there is
  a spanning fluffle $G$ of the form $G=C\cup P$, where $C$ is an
  elementary circuit and $P$ is an ear connecting a reaction-vertex $r\in
  R'$ to a substrate-vertex $x\in X'$.
\end{theorem}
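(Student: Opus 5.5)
We want a spanning fluffle of $(X',R')$ of the form $C\cup P$. By Cor.~\ref{cor:ac-inducedfluffle}, $\king(\child)=\king[X'\cupdot R']$ is an induced fluffle with Metzler CS-matrix $\SM[\child]$. By Lemma~\ref{lem:Blokhuis4} this matrix is irreducible. With unit stoichiometry its diagonal entries lie in $\{-1,0\}$ and its off-diagonal entries in $\{0,1\}$. The plan is to exploit minimality in the following form: if a spanning subgraph $H\subseteq\king(\child)$ is a fluffle whose fluffle-part matrix $\mathbf{A}(H)$ is semipositive, then $\mathbf{A}(H)\le\SM[\child]$ entrywise off the diagonal, with the same diagonal. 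Hence $\SM[\child]$ is semipositive as well; this costs nothing. The useful direction is the converse: I look for a small subgraph $H$ whose $\mathbf{A}(H)$ already certifies autocatalysis on the full vertex set. For such an $H$ it suffices, by Prop.~\ref{prop:irMHu=sp} and the remark after Lemma~\ref{lem:unitdet}, that $\mathbf{A}(H)$ is irreducible with $(-1)^{n-1}\det\mathbf{A}(H)>0$.

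I split into the two alternatives of Lemma~\ref{lem:contrelemLeibniz}.

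In case (ii) some diagonal entry is zero, so there is a digon $(x,\kappa(x),x)$. There is also a contributing elementary circuit $C^*$ through all $x$ with $\SM[\child]_{xx}=0$. I take $C:=C^*$. If $C$ already spans $X'\cupdot R'$, then it is itself a fluffle. Otherwise, the remaining vertices must be attached by ears, since $\king(\child)$ is a fluffle with an ear decomposition starting from $C$ (Prop.\ on ear decompositions). The claim to prove is that minimality forces exactly one ear, and that this ear can be chosen to be the catalytic back-edge or a path. The idea is as follows. With $H=C\cup P$ for a single ear $P$ whose circuits are not contributing, Lemma~\ref{lem:unitdet} gives $(-1)^{n-1}\det\mathbf{A}(H)=N-N'=1-1\ge$\dots. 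So I need $N'=0$ on the vertices of $C$ while the digon vertices lie on $C$, giving $N-N'=1>0$. Any vertex outside $C$ with nonzero diagonal and not lying on a positive circuit could be pruned, contradicting minimality (via Lemma~\ref{lem:square+kappa}: a proper sub-CS would be autocatalytic). In fact I expect to show $V(C)=X'\cupdot R'$ except possibly for a digon chord, so that $P$ is the single edge $(\kappa(x),x)$. This matches the abstract's ``single elementary circuit together with a simple entity-to-reaction chord''.

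In case (i) all diagonal entries equal $-1$, and there are contributing circuits $C_1,C_2$ sharing a vertex. I take $C:=C_1$ and let $P$ be a maximal subpath of $C_2$ outside $C_1$. This is an ear from a reaction vertex to an entity vertex of $C_1$, because in-degree one for reactions forces $C_2$ to leave $C_1$ at a reaction and re-enter at an entity. Now $H=C\cup P$ is a centralized-type fluffle with exactly two contributing circuits sharing a vertex and nonzero diagonal. Lemma~\ref{lem:unitdet} then gives $(-1)^{n-1}\det\mathbf{A}(H)=2-1=1>0$ on $V(H)$. Since $\mathbf{A}(H)$ is irreducible Metzler, it is semipositive by Prop.~\ref{prop:irMHu=sp}. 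Because $V(H)\subseteq X'\cupdot R'$ and $\SM[\child]$ restricted to $V(H)$ dominates $\mathbf{A}(H)$ entrywise with equal diagonal, the sub-CS on $V(H)$ is autocatalytic. By minimality of the core, together with Theorem~\ref{thm:main}, this forces $V(H)=X'\cupdot R'$, so $H$ is spanning.

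The main obstacle is the comparison step: ``$\SM[\child]$ restricted to $V(H)$ dominates $\mathbf{A}(H)$, hence is semipositive.'' For irreducible Metzler matrices this follows from monotonicity of the Perron root, so it should be fine. The delicate part is case (ii). There I must ensure that the circuit $C^*$ together with one ear captures every zero-diagonal vertex, and that no second disjoint structure is required. I would first try to show that any vertex not on $C^*$ has diagonal $-1$, so that $\mathbf{A}(C^*)$ restricted to $V(C^*)$ already has $N=1,N'=0$ and is autocatalytic. Minimality would then force $C^*$ to be spanning up to digon back-edges, and the statement would follow with $P$ a digon edge.
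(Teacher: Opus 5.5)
Your proposal is correct and follows the paper's proof. It uses the same ingredients: the dichotomy of Lemma~\ref{lem:contrelemLeibniz}, the count $(-1)^{n-1}\det\mathbf{A}(H)=N-N'$ from Lemma~\ref{lem:unitdet} ($2-1$ in case (i), $1-0$ for $C^*$ in case (ii)), Prop.~\ref{prop:irMHu=sp} for semipositivity, entrywise domination $\mathbf{A}(H)\le\SM[\child']$, and minimality of the core to force $V(H)=X'\cupdot R'$. Taking only one ear $P$ of $C_2$ in case (i) slightly sharpens the paper's $G=C_1\cup C_2$, since it guarantees exactly two circuits, centralization and the stated $C\cup P$ shape; the step you flag as delicate in case (ii) is already supplied by Lemma~\ref{lem:contrelemLeibniz}(ii), since $C^*$ contains every zero-diagonal vertex, and the comparison step is simply $\SM[\child']\mathbf{v}\ge\mathbf{A}(H)\mathbf{v}\gg0$, with no Perron-root monotonicity needed.
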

\begin{proof}
  Let $(X',R')$ be an autocatalytic core with CS-matrix
  $\SM[\pmb{\kappa}]$.  We consider two complementary cases.
  \par\noindent\emph{Case (i).  All diagonal entries of $\SM[\pmb{\kappa}]$
  are nonzero.}  Lemma \ref{lem:contrelemLeibniz} implies that there exists
  two contributing elementary circuits $C_1$ and $C_2$ that share at least
  one vertex. We consider the fluffle $G \coloneqq C_1 \cup C_2$, which is
  then centralized.
  Lemma~\ref{lem:unitdet} implies that $(-1)^{|V({G})|-1}
  \det\mathbf{A}(G)=2-1>0$ and in particular $\mathbf{A}(G)$ is
  Hurwitz-unstable.  Thus, by Prop.~\ref{prop:irMHu=sp} $\mathbf{A}(G)$ is
  semipositive.  Moreover, $E_1(G)$ defines a sub-CS $\child'$ of $\child$
  on $\tilde{X}'=X'\cap V(G)$ and $\tilde{R}'=R'\cap V(G)$ and we have, by
  construction, $\mathbf{A}(G)\le \SM[\child']$ (element-wise), i.e.,
  $\child'$ on $(\tilde{X}',\tilde{R}')$ is an autocatalytic CS.  Thus $G$
  is a spanning fluffle for $\king(\child)$, since $\king(\child)$ is an
  autocatalytic core.
  \par\noindent\emph{Case (ii). At least one diagonal
  entry of $\SM[\pmb{\kappa}]$ is zero.}  Lemma \ref{lem:contrelemLeibniz}
  implies that there exists one contributing elementary circuit $C^*$ that
  contains all such vertices $x$ with $\SM[\child]_{xx}= 0$.  Consider the
  fluffle $G^*=C^*$, which is clearly centralized.  Lemma~\ref{lem:unitdet}
  implies that $(-1)^{|V(G^*)|-1}\Det\mathbf{A}(G^*)=1>0$ and in particular
  $\mathbf{A}(G^*)$ is Hurwitz-unstable.  Thus, by
  Prop.~\ref{prop:irMHu=sp} $\mathbf{A}(G_1)$ is semipositive.  Moreover,
  $E_1(G^*)$ defines a sub-CS $\tilde{\child}$ of $\child$ on $\tilde{X}'=X'\cap
  V(G)$ and $\tilde{R}'=R'\cap V(G)$ and we have, by construction,
  $\mathbf{A}(G^*)\le \SM[\tilde{\child}]$, i.e., $\tilde{\child}$ on
  $(\tilde{X}',\tilde{R}')$ is an autocatalytic CS.  Thus $G^*$ is a
  spanning fluffle for $\king(\child)$, since $\king(\child)$ is an
  autocatalytic core.
\end{proof}

Next we observe that Thm.~\ref{lem:reversibilityfull} can be strengthened
in the case of unit stoichiometry:
\begin{corollary}
  \label{cor:rev-unit}
  If $(X',R')$ is an autocatalytic core with unit stoichiometry, then
   it is hard, i.e., there is no other autocatalytic core in its
  reversible extension $(X',R'\cup\bar{R}')$.
\end{corollary}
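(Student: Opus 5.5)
By the corollary following the definition of hard cores, an autocatalytic core is hard if and only if it contains no drainable elementary circuit. I would therefore prove that, under unit stoichiometry, no elementary circuit $C$ of $\king[X'\cupdot R']$ is drainable, i.e.\ that $\stout(C)\ge\stin(C)$ always holds. The case $|X'|=|R'|=1$ needs no work: the proof of Thm.~\ref{lem:reversibilityfull} already shows that a single autocatalytic reaction has no other core in its reversible extension. So I assume $|X'|=|R'|\ge 2$.

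First I fix the structure. By Cor.~\ref{cor:ac-inducedfluffle}, $\king[X'\cupdot R']=\king(\child)$ is an induced fluffle. Its only MR-edges are $(x,\kappa(x))$, and by Lemma~\ref{lem:square+kappa} $x$ is the unique reactant of $\kappa(x)$ in $X'$. Hence every elementary circuit has the form $C=(x_1,r_1,\dots,x_n,r_n,x_1)$ with $r_i=\kappa(x_i)$. I then evaluate the factors of Def.~\ref{def:Deshpande} using $s^\pm\in\{0,1\}$.

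\emph{Factors of $\stin(C)$.} Since $s^-_{x_ir_i}=1$, each factor is $s^-_{x_ir_i}-s^+_{x_ir_i}=1-s^+_{x_ir_i}\in\{0,1\}$. Thus $\stin(C)\in\{0,1\}$.

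\emph{Factors of $\stout(C)$ for $n\ge 2$.} We have $x_{i+1}\ne x_i$, and $x_{i+1}$ is not a reactant of $r_i$ by uniqueness of the reactant. So $s^-_{x_{i+1}r_i}=0$, and the edge $(r_i,x_{i+1})$ exists, so $s^+_{x_{i+1}r_i}=1$. Each factor is therefore $1$, giving $\stout(C)=1\ge\stin(C)$. This is the same as saying that $\SM[\child]$ is Metzler with off-diagonal entries in $\{0,1\}$ (Cor.~\ref{cor:diagonal}), and that the positive ones are exactly the RM-edges of $\king(\child)$.

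\emph{Digons, $n=1$.} Here $C=(x,\kappa(x),x)$ with $s^+_{x\kappa(x)}=s^-_{x\kappa(x)}=1$. Then $\stout(C)=\SM[\child]_{xx}=0=\stin(C)$, so $C$ is neutral. This is consistent with Cor.~\ref{cor:diagonal}, which forces a non-positive diagonal.

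Hence no circuit is drainable, and the core is hard by the corollary to Thm.~\ref{lem:reversibilityfull}. The only step needing care is making sure every circuit considered really uses the matching edges $(x_i,\kappa(x_i))$ and that no off-diagonal RM-edge can carry net stoichiometry $0$. Both follow from the induced-fluffle property of Cor.~\ref{cor:ac-inducedfluffle} together with the unique-reactant property of Lemma~\ref{lem:square+kappa}. I do not expect any genuine obstacle beyond this bookkeeping.
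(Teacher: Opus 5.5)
Your proof is correct, but its key step differs from the paper's. The paper argues on the reverse side. From the proof of Thm.~\ref{lem:reversibilityfull}, any further core in $(X',R'\cup\bar R')$ must be an induced elementary circuit $C$ made of reversed reactions. Thm.~\ref{thm:generation} is then invoked to rule out that such a circuit is autocatalytic under unit stoichiometry. This tacitly uses that an induced circuit has only one contributing circuit and no digon, so neither alternative of Lemma~\ref{lem:contrelemLeibniz} can occur. That extra observation is genuinely needed, because case (ii) of Thm.~\ref{thm:generation} does produce single-circuit spanning fluffles.

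You instead stay on the forward side. You use only the contrapositive of Thm.~\ref{lem:reversibilityfull} (no drainable circuit implies hard), which is exactly the direction the theorem proves. You then check directly that every elementary circuit of the induced fluffle $\king(\child)$ satisfies $\stout(C)=1\ge\stin(C)\in\{0,1\}$ for $n\ge 2$, and is neutral when it is a digon. Since $\stout(C)=\stin(\bar C)$ and $\stin(C)=\stout(\bar C)$, both routes verify the same inequality. Yours needs only the unique-reactant property and $s^\pm_{xr}\in\{0,1\}$, not the Leibniz-expansion machinery behind Thm.~\ref{thm:generation}.

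Your route also gives slightly more. Every circuit with $n\ge 2$ in a unit-stoichiometry core is neutral, or autocatalytic exactly when it passes through an entity $x$ carrying a digon $(x,\kappa(x),x)$. This justifies the remark ``with unit stoichiometry no drainable circuits are possible'', which is stated without proof in the proof of Cor.~\ref{cor:classification}.

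Two small points. The case $|X'|=1$ is actually vacuous, since $s^+_{xr}>s^-_{xr}>0$ cannot hold with unit coefficients. The uniqueness of the reactant of $\kappa(x)$ is Lemma~\ref{lem:cs-no-ends}(iii), or follows from Lemma~\ref{lem:square+kappa} together with injectivity of $\kappa$.
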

\begin{proof}
  Assume $(X',R'\cup\bar{R}')$ contains another autocatalytic core
  $(\tilde{X}',\tilde{R}')$. By the proof of
  Thm.~\ref{lem:reversibilityfull}, we have, $\tilde{X}' \subsetneq X'$,
  $\tilde{R}'\subsetneq \bar{R}'$, and thus $(\tilde{X}',\tilde{R}')$ is an
  induced elementary circuit $C$. Thm.~\ref{thm:generation} implies that
  $C$ cannot be autocatalytic with unit stoichiometries.
\end{proof}

It may be the case that the spanning fluffle $G$ is actually just a single
elementary circuit $C$. This leads us to a ``classification'' corollary of
Thm.~\ref{thm:generation}:
\begin{corollary}[Classification]\label{cor:classification}
In RNs with unit stoichiometry, there is only one class of autocatalytic 
cores for which there is \emph{no} spanning fluffle $G$ that is 
a \emph{single} elementary circuit $G\neq C$, and this is
\begin{equation}\label{eq:2circuits}
  \begin{split}
    &x_1\rightarrow x_2 + x_3\\
    &x_2\rightarrow x_{21} \rightarrow ... \rightarrow x_{2n} \rightarrow x_1\\
    &x_3\rightarrow x_{31} \rightarrow ... \rightarrow x_{3m} \rightarrow x_1\\
  \end{split}
\end{equation}
with $x_1 \neq x_2 \neq x_3$ and $n, m\ge0$. In particular, such
autocatalytic cores are without explicit catalysis and correspond to 
type III of Blokhuis classification \cite{Blokhuis:20}.
\end{corollary}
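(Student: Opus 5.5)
The approach is to follow the case split in the proof of Thm.~\ref{thm:generation}. We need to determine exactly when no single elementary circuit spans $(X',R')$. In case (ii) of Lemma~\ref{lem:contrelemLeibniz}, the contributing circuit $C^*$ yields a semipositive $\mathbf{A}(C^*)$, and hence an autocatalytic sub-CS on $V(C^*)$. Minimality of the core then forces $V(C^*)=X'\cupdot R'$, so the core is spanned by a single elementary circuit. Consequently, a core without such a spanning circuit must fall into case (i): every diagonal entry of $\SM[\child]$ equals $-1$. By unit stoichiometry and Cor.~\ref{cor:diagonal}, this means no reaction is explicitly catalyzed, and thus the core has no catalysis.

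In case (i), we take two contributing circuits $C_1,C_2$ sharing a vertex. As in Thm.~\ref{thm:generation}, minimality gives $V(C_1\cup C_2)=X'\cupdot R'$. Now $C_1$ alone must not span, so $C_1\ne C_1\cup C_2$; the same holds for $C_2$. Moreover, by Lemma~\ref{lem:unitdet}, any single induced-type circuit with all diagonal entries $-1$ has $(-1)^{k-1}\det=1-1=0$. Hence a lone circuit is never autocatalytic, and the core is genuinely a superposition. Next we determine the shape of $C_1\cup C_2$. Each entity has out-degree $1$ in the fluffle and each reaction has in-degree $1$. Therefore the two circuits can only diverge at a reaction vertex, where one reaction has two products, and reconverge at an entity vertex with two producing reactions.

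The key step is to show that $C_1\cap C_2$ is a single path from an entity $x_1$ to a reaction $r$, and that after contracting we get exactly the structure \eqref{eq:2circuits}. Since both circuits pass through the common vertices with out-degree/in-degree constraints, $C_2\setminus C_1$ is a union of ears from reaction vertices to entity vertices. If there were two or more ears, or the shared part were not a single path, we would obtain further circuits. We then compute $\det\mathbf{A}(G)$ by Leibniz. With $N$ contributing circuits pairwise intersecting and $N'=1$ for zero diagonal absent (here $N'$ corresponds to the $-1$ diagonal term), we get $N-1$. We also have to check whether $G$ has a proper sub-fluffle that is already autocatalytic; minimality excludes this. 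I expect the main obstacle to be proving that the only configuration is one ear, with the common segment reduced to $x_1\to r_1$. The concrete plan is as follows. If the common part contained an entity $y\neq x_1$ lying strictly after the branching, then the reaction $\kappa(y)$ would also be shared, contradicting branching. Conversely, the merge must occur at $x_1$ itself, since otherwise the segment from merge point to the branching reaction could be shortened. Shortening would give a smaller autocatalytic fluffle (again two circuits meeting, $\det$ sign $(-1)^{k-1}$ by Lemma~\ref{lem:unitdet} with $N=2$), which contradicts minimality. Hence the branching reaction is $\kappa(x_1)$ with products $x_2,x_3$, and both branches are chains returning to $x_1$.

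Finally, we relabel the two branches as the chains $x_2\to x_{21}\to\dots\to x_1$ and $x_3\to\dots\to x_1$. Unit stoichiometry and the absence of further edges follow from Cor.~\ref{cor:ac-inducedfluffle}: the fluffle is induced, so no chords exist. We then check directly that \eqref{eq:2circuits} is autocatalytic, since $\det$ has sign $(-1)^{k-1}$ with $N=2$. We also check that it has no spanning single circuit, because each circuit misses one branch (for $n,m\ge 0$ each branch contains at least $x_2$ or $x_3$). This identifies it with Blokhuis type III.
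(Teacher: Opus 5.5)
Your case (ii) and your reduction of case (i) to a catalysis-free core (all diagonal entries $-1$) agree with the paper. The genuine gap is in the structural step of case (i). You claim the merge entity must coincide with the branching entity $x_1$, because otherwise the shared segment ``could be shortened.'' Shortening a path does not produce a sub-RN, and minimality only compares the core with its own sub-RNs. In a union of two circuits sharing a path, every vertex of that path lies on both circuits, so deleting any of them destroys both. Deleting an inner vertex of one return branch leaves a single circuit with diagonal $-1$; it has Perron root $0$ and is not semipositive by Prop.~\ref{prop:irMHu=sp}. So there is nothing to contradict, and the conclusion is in fact false. Take
\[
  x_0\underset{r_0}{\longrightarrow}x_1,\qquad
  x_1\underset{r_1}{\longrightarrow}x_2+x_3,\qquad
  x_2\underset{r_2}{\longrightarrow}x_0,\qquad
  x_3\underset{r_3}{\longrightarrow}x_0 .
\]
With $\kappa(x_i)=r_i$ you get $\SM[\child](11,10,9,9)^T=(7,1,1,1)^T\gg0$. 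Every reaction has a unique reactant, so by Thm.~\ref{thm:main} minimality reduces to the proper principal submatrices. Each of these is either acyclic or a single neutral circuit, so none is semipositive. Hence this is a unit-stoichiometry autocatalytic core with no spanning elementary circuit. Yet $C_1\cap C_2$ is the path $x_0\to r_0\to x_1\to r_1$, not $x_1\to r_1$: the merge is at $x_0$ and the split at $r_1$. Such cores fit \eqref{eq:2circuits} only if the two return chains are allowed to coalesce (here $x_{21}=x_{31}=x_0$). The shape you actually need is ``two circuits meeting in exactly one common path, neither return branch being a single reaction-to-entity edge.''

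Two further steps also need repair. First, a common entity after the branching is not a contradiction, because the circuits may simply re-merge. The correct argument is that if $C_1\cap C_2$ had two or more segments, the crossed circuits would give two intersecting circuits on a proper vertex subset, hence a smaller autocatalytic sub-RN. Second, Cor.~\ref{cor:ac-inducedfluffle} only excludes extra entity-to-reaction edges, since each reaction has a unique reactant. It says nothing about extra reaction-to-entity edges (additional products) inside $X'\cupdot R'$. You must exclude these separately, by showing that any such edge either creates a spanning circuit or yields a proper autocatalytic sub-RN. The paper avoids this combinatorics altogether: it reduces case (i) to a hard, catalysis-free core and reads the exceptional shape off the five types of \cite{Blokhuis:20}. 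A self-contained derivation like yours is possible, but it has to establish these three points correctly.
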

\begin{proof}
Let $(X',R')$ be an autocatalytic core with associated CS-matrix
$\SM[\child]$. Again, the dicotomy expressed in Lemma
\ref{lem:contrelemLeibniz} and consequently in the proof of
Thm.~\ref{thm:generation} holds and for the case (ii), i.e. when
$\SM[\child]$ has a zero diagonal entry, we get directly that any
autocatalytic core has a spanning fluffle $G=C$, which is a single
elementary circuit. In case (i), we apply Thm.~\ref{lem:reversibilityfull}
and we obtain again $(X',R')$ as a unique autocatalytic core for the
reversible setting, since with unit stoichiometry no drainable circuits
\textit{sensu} Def.~\ref{def:Deshpande} are possible. In this case, we can
reduce to the case without explicit catalysis since minimality of $(X',R')$
is determined in terms of all sub-matrices of $\SM[\child]$, which have a
negative diagonal.  In fact, unit stoichiometry,
Lemma~\ref{lem:cs-no-ends}~(ii), and Cor.~\ref{cor:acore-uniqueCS} directly
exclude the presence of explicit catalysis in $(X',R')$. We can then
consider the $(X',R')$ as a full network without catalysis, and apply the
classification theorem from \cite{Blokhuis:20}. By quick inspection of the
five types described in \cite{Blokhuis:20}, omitted here, we conclude that
\eqref{eq:2circuits} is the only possible example that does not allow for a
spanning fluffle consisting of a single elementary circuit.
\end{proof}

\section{Summary and concluding remarks} 

Much of the literature on chemical reaction networks excludes explicit
  catalysts, i.e., reactions in which reactants also appear as
  products. This is, in particular, the case for previous studies concerned with
  autocatalytic substructures. Here, we have shown that key features of
  autocatalytic cores established in
  \cite{Blokhuis:20,Vassena:24a,Golnik:25z} are retained in the more
  general setting, but there are indeed subtle differences. These are
  summarized in Table~\ref{tab:Comparison}.

\begin{table}[tb]
\caption{Comparison of an autocatalytic core $(X',R')$ without and with explicit catalysis.}
\label{tab:Comparison}
\centering
\begin{tabular}{|c|c|c|} 
  \hline
  &									& 							\\[-0.75em]
  & Without Explicit Catalysis & With Explicit Catalysis \\[0.5em]
  \hline
  &									& 										\\[-0.75em]
  SR -			& $\forall x \in X'\; \exists ! r\in R': s^-_{xr}>0$
  & $\forall x \in X' \;\exists !
  r\in R': s^-_{xr}>0$\\
  relations		& $\forall x \in X' \;\exists r\in R': s^+_{xr}>0$ 	& $\forall x \in X' \;\exists r\in R': s^+_{xr}-s^-_{xr}>0 $ \\[0.25em]
  & \textbf{\cite{Blokhuis:20, Vassena:24a}} 		& \textbf{(Lemma~\ref{lem:cs-no-ends})} \\[0.25em]
  
  \hline
  &									\multicolumn{2}{|c|}{}							\\[-0.75em]
  CS 				& \multicolumn{2}{c|}{Unique CS $\child=(X', R',\kappa)$}\\[0.25em]
  & \multicolumn{2}{c|}{\textbf{\cite{Vassena:24a}} \hspace{102pt} \textbf{(Cor.~\ref{cor:acore-uniqueCS})}}\\[0.25em]
  \hline					&						&					\\[-0.75em]
  & \emph{Metzler} 			& \emph{Metzler} \textbf{(Cor.~\ref{cor:diagonal})}\\[0.25em]
  & negative diagonal 			& nonpositive diagonal \textbf{(Cor.~\ref{cor:diagonal})} \\[0.25em]
  $\SM[\child]$	     	& \emph{semipositive} 		& \emph{semipositive} \textbf{\cite{Vassena:24a}} \\[0.25em]
  & \emph{irreducible} 			& \emph{irreducible} \textbf{(Lemma~\ref{lem:Blokhuis4})}\\[0.25em]
  & \textbf{\cite{Vassena:24a}} 	& \\[0.25em]
  \hline 
  &									\multicolumn{2}{|c|}{} 				\\[-0.75em]
  Bipartite   					& \multicolumn{2}{|c|}{$\king(\child)=\king[X'\cup R']$ \emph{(induced Fluffle)}}\\[0.25em]
  graph					& \multicolumn{2}{|c|}{\textbf{\cite{Golnik:25z}} \hspace{84pt} \textbf{(Corollary~\ref{cor:ac-inducedfluffle})}}\\[0.25em]
  \hline
  &									& 							\\[-0.75em]
  Digons					& Absent 	& Possible as $(x,\kappa(x),x)$ 	\\[0.5em]	
  \hline
  &									& 							\\[-0.75em]
  Reversible 				& Elementary circuit   					& Elementary circuit (or Digon)		\\
  cores					& \textbf{Thm.~\ref{lem:reversibilityfull}, Cor.~\ref{cor:DigOrFree}} & \textbf{Thm.~\ref{lem:reversibilityfull}, Cor.~\ref{cor:DigOrFree}} \\[0.5em]
  \hline
  &									& 							\\[-0.75em]
  & Elementary circuit  & Elementary circuit \\	
  Unity $\mathbf{S}$			& + RM-Ear \textbf{(Thm.~\ref{thm:generation})} & + RM-Chord \textbf{(Thm.~\ref{thm:generation})}
  \\
[0.5em]
  & Only hard cores \textbf{(Cor.~\ref{cor:rev-unit})} & Only hard cores \textbf{(Cor.~\ref{cor:rev-unit})} \\[0.5em]
  \hline  
\end{tabular}
\end{table}

First, autocatalytic cores may appear in the form $(\{x\},\{r\})$ of a
single explicitly autocatalytic reaction $r$ acting on a single entity $x$,
with $s^+_{xr}-s^-_{xr}>0$. In contrast, without catalysts, autocatalytic
cores require at least two species and two reactions \cite{Blokhuis:20}.
In line with previous results \cite{Vassena:24a}, autocatalytic cores in
RNs with explicit catalysis have a unique CS
(Cor.~\ref{cor:acore-uniqueCS}).  Moreover, CS matrices of autocatalytic
cores are again Metzler matrices. However, their diagonal entries are not
necessarily strictly negative; they may vanish for the catalyst $x$ of
$\kappa(x)$, or even be positive in the special case of the core being a
single species and an autocatalytic reaction, where the resulting
one-dimensional matrix satisfies $\SM[\child] = s^+_{xr} - s^-_{xr} > 0$.

Explicitly catalyzed reactions have a specific structure representation in
the bipartite K\"onig graph, as they emerge as digons of the form
$(y,\kappa(x), y)$. In autocatalytic cores, only digons appear where $x$
equals $y$, while for larger autocatalytic subsystems, also multiple
catalysts are theoretically possible. The bipartite K\"onig representation
of autocatalytic cores as \emph{fluffles} is thereby conserved in the
presence of explicit catalysis, since the unique CS directly translates
into a unique perfect matching of the reactant-to-reaction edges.

A useful practical implication of the mathematical results reported here is
that the algorithmic approaches to enumerating autocatalytic cores
developed for RNs without explicit catalysis \cite{Golnik:25z,Golnik:26q}
remain valid in RNs with explicit catalysis. \texttt{Autogatito} \cite{Golnik:26q}, 
a python package determining all autocatalytic cores via enumeration of 
metabolite-to-reaction chord-free elementary circuits, in particular, lists digons 
in a preprocessing step, and allows digons whose reactant-to-reaction edges 
are located along the circuit, while its reaction-to-product edges are tolerated 
chords. It thus accounts for the presence of explicitly catalyzed reactions in the RN.
\texttt{Autogato} \cite{Golnik:25z}, on the other hand, enumerates elementary 
circuits, regardless of the presence of chords of any type in the induced subgraph. 
This algorithm detects not only autocatalytic cores with explicitly catalyzed reactions, 
but also subsystems associated with autocatalytic CS matrices with irreducible 
Metzler part where digons are of the form $(y,\kappa(x), y), x\neq y$. 

Larger \emph{fluffles} can be enumerated efficiently via the superposition
of elementary circuits by taking advantage of an equivalence relationship
defined by the reactant-to-reaction edges \cite{Golnik:25z}. In general, some 
cores can only be enumerated by the superposition of at least 3 elementary 
circuits \cite{Golnik:26q}. However, for the case of unit stoichiometry, which 
applies to cell signalling networks as well as to many but not all metabolic 
pathways, autocatalytic cores have spanning fluffles that are composed of 
a single elementary circuit and an attached MR-ear. If the core contains an 
explicitly catalyzed reaction, such an MR-ear is a simple MR-chord. This 
has especially significant consequences for computational complexity, 
and allows for terminating the enumeration process conveniently after all 
suitable superpositions of two elementary circuits have been tested.

The two types described in the last paragraph reflect the classification
system for RNs, consisting only of reversible reactions without explicit
catalysis \cite{Blokhuis:20}. Although chemical reactions are always
reversible, thermodynamic constraints often render reactions effectively
irreversible since backward rates are many orders of magnitude
slower. Realistic models of chemical RNs are thus usually composed of
irreversible and reversible reactions. While autocatalytic cores with
spanning fluffles that are composed of a single elementary circuit and a
simple MR-chord recover type II, IV, and V, elementary circuits with
attached MR-ears that are not chords, recover type III from
\cite{Blokhuis:20}. A single elementary circuit (type I) with unit
stoichiometries, however, is \textit{a priori} not autocatalytic.

Extending autocatalytic cores with arbitrary stoichiometries by all of
their reverse reactions may give rise to additional autocatalytic
cores. These additional autocatalytic cores, however, are always elementary
circuits and consist of reversed reactions only. Moreover, they are either
digons corresponding to explicit autocatalytic reactions or longer
elementary circuits that are then free of explicit catalysis. The latter
conform to type I in the classification of \cite{Blokhuis:20}. In RNs with
unit stoichiometry, moreover, the reverse of any autocatalytic core never
contains another autocatalytic core.

To account for reversible RNs, i.e., comprising for each reaction also its
reverse, we introduced the notion of \emph{hard autocatalytic cores} for
autocatalytic cores for which a subset of reverse reactions on the same
entities is never autocatalytic.  The hard autocatalytic cores in the
irreversible setting are in 1-1 correspondence to autocatalytic cores in
the setting where all reactions are considered reversible. Note that the
classification theorem of \cite{Blokhuis:20} pertains to the latter
framework only.

Revisiting the relationship between unstable cores and autocatalytic cores
\cite{Vassena:24a}, we clarified the relationships between cores minimal
w.r.t.\ to the subset relation on $X\cup R$ and CS cores minimal
w.r.t.\ the restriction of child-selections. While autocatalytic cores in
\cite{Blokhuis:20} are defined w.r.t.\ the former, the unstable cores of
\cite{Vassena:24a} are defined w.r.t.\ the latter. We therefore introduced
here a notion of \emph{autocatalytic CS cores}. They are associated with
autocatalytic CS matrices with irreducible Metzler part. Thus, the approach
taken in \cite{Golnik:25z} in particular also enumerates all autocatalytic
CS cores, which is, however, computationally very
expensive. Therefore, it remains an open question of practical interest
whether they can be found more efficiently from the set of all
autocatalytic cores.

In contrast to autocatalytic cores, whose bipartite K\"onig representation
are induced fluffles, for autocatalytic CS cores, only $\king(\child)$ is a
fluffle, while the induced subgraph may contain additional edges. In
particular, autocatalytic CS cores have reactions with multiple reactants
within the core. This directly translates into their matrix
representation. In the case of RNs without explicit catalysis, the set of
autocatalytic CS cores with a Metzler CS matrix coincides with the set of
autocatalytic cores. In the case of RNs with explicit catalysis, however,
autocatalytic CS cores that are not autocatalytic cores could also exhibit
a Metzler CS matrix (Fig.~\ref{fig:CoreRelationships}). 

\subsection*{Acknowledgements}

This work was supported in part by the Novo Nordisk Foundation (grant
NNF21OC0066551 ``MATOMICS''.  Research in the Stadler lab is support by the
BMBF (Germany) through DAAD project 57616814 (SECAI, School of Embedded
Composite AI), and jointly with SMWK (Saxony) through the \emph{Center for
Scalable Data Analytics and Artificial Intelligence Dresden/Leipzig}
(SCADS24B).

\bibliographystyle{adamjoucc}
\bibliography{fullCRN}
\label{sect:biblio}

\end{document}